 \newtheorem{theorem}{Theorem}[section]
 \newtheorem{corollary}[theorem]{Corollary}
 \newtheorem{lemma}[theorem]{Lemma}
 \newtheorem{proposition}[theorem]{Proposition}
 \theoremstyle{definition}
 \newtheorem{definition}[theorem]{Definition}
 \theoremstyle{remark}
 \newtheorem{remark}[theorem]{Remark}
 \newtheorem*{example}{Example}
 \numberwithin{equation}{section}
\numberwithin{equation}{section}
\def\R{\mathbb R}
\def\N{\mathbb N}
\def\C{\mathbb C}
\renewcommand{\epsilon}{\varepsilon}
\renewcommand{\phi}{\varphi}
\def\v{\upsilon}
\def\<{\langle} \def\>{\rangle}
\begin{document}

\title [Commutators for certain fractional type operators]{Commutators for certain fractional type operators on weighted spaces and Orlicz-Morrey spaces}

\author[Huoxiong Wu]{Huoxiong Wu}

\address{
School of Mathematical Sciences, Xiamen University\\
Xiamen, Fujian, 361005\\
People’s Republic of China}

\email{huoxwu@xmu.edu.cn}

\author{Tong Zhang$^*$}

\address{
School of Mathematical Sciences, Fudan University\\
Shanghai, 200433\\
People’s Republic of China}
\email{tongzhang18@fudan.edu.cn}


\keywords{fractional type integral operators, commutators, Muckenhoupt weights; Orlicz-Morrey space, Campanato space, compactness}

\subjclass{42B20; 42B25; 42B30; 42B35; 46E30}

\begin{abstract}
In this paper, we focus on a class of fractional type integral operators that can be served as extensions of Riesz potential with kernels
$$K(x,y)=\frac{\Omega_1(x-A_1 y)}{|x-A_1 y |^{\frac{n}{q_1}}} \cdots \frac{\Omega_m(x-A_m y)}{|x-A_m y |^{\frac{n}{q_m}}},$$
where $\alpha\in [0,n),  m\geqslant1, \sum_{i=1}^m\frac{n}{q_i}=n-\alpha$, $\{A_i\}^m_{i=1}$ are invertible matrixes, $\Omega_i$ is homogeneous of degree 0 on $\R^n$ and $\Omega_i\in L^{p_i}(S^{n-1})$ for some $p_i\in [1,\infty)$. Under appropriate assumptions, we obtain the weighted $L^p$ estimates as well as weighted Hardy estimates of the commutator for such operators with $BMO$-type function. In addition, we acquire the boundedness of these operators and their commutators with a function in Campanato space on Orcliz-Morrey spaces as well as the compactness for such commutators in a special case: $m=1$ and $A=I$.
\end{abstract}

\maketitle

\section{Introduction}
The boundedness of singular integral operators on various function spaces has always been a classic topic in Harmonic Analysis.  During the last years, there are several kinds of generalizations for classical operators such as the Calder\'on-Zygmund operator and the Riesz potential $I_\alpha$.

Originally, Ricci and Sj\"ogren considered the $L^2(\R)$-boundedness of the integral operator $T_1$ defined by
$$T_1f(x)=\int_{\R} |x-y|^{-\alpha}|x+y|^{\alpha-1}f(y)dy,$$
where $0<\alpha<1$ in \cite{RS88} when they focused on some maximal operators coming up in studying the boundary behavior of Possion integrals. Then Godoy and Urciuolo extended the operator to $\R^n$ in \cite{GU93} and obtained the $L^p(\R^n)$-boundedness of the operator $T_n$ defined as
$$T_nf(x)=\int_{\R^n} |x-y|^{-\alpha}|x+y|^{\alpha-n}f(y)dy
$$
for $1<p<\infty$. Since then, a significant number of related operators in more general conditions have been studied extensively, such as the operator $\tilde T$ defined by
$$\tilde Tf(x)=\int_{\R^n} \frac{1}{|x-A_1y|^{\alpha_1}\cdots|x-A_m y|^{\alpha_m}}f(y)dy,$$
where $\{A_i\}^m_{i=1}$ are invertible matrixes and $\alpha_1+\cdots+\alpha_m=n-\alpha$ for some $\alpha\in [0,n)$.  This operator is bounded from $L^p(\R^n)$ to $L^q(\R^n)$ for any $1<p<\frac{n}{\alpha}$ and  $\frac{1}{q}=\frac{1}{p}-\frac{\alpha}{n}$, from $H^p(\R^n)$ to $L^q(\R^n)$ for any $0<p<\frac{n}{\alpha}$ and  $\frac{1}{q}=\frac{1}{p}-\frac{\alpha}{n}$. Furthermore, there are several weighted estimates for $\tilde T$ in different  conditions, as well as some results on variable spaces. For the abundant results of boundedness for $\tilde T$, we refer the readers to  \cite{GU99, RU12,RU13, RU14(2), RU17(2)}.

In this work, we investigate a class of fractional type integral operators with rough kernels sharing the following form:
Given $\alpha\in[0,n)$, $m\geqslant1$, $  \sum_{i=1}^m\frac{n}{q_i}=n-\alpha$ and a suitable function $f$, $T_{\Omega,\alpha}^m f$ is defined by
\begin{align}\label{D1}
	T_{\Omega,\alpha}^m f(x)=\int_{\R^n}\frac{\Omega_1(x-A_1 y)}{|x-A_1 y |^{\frac{n}{q_1}}} \cdots \frac{\Omega_m(x-A_m y)}{|x-A_m y |^{\frac{n}{q_m}}} f(y)dy.
\end{align}
where $\Omega_i$ is homogeneous of degree 0 on $\R^n$ and $\Omega_i\in L^{p_i}(S^{n-1})$ for some $p_i\geqslant1$. If we set $m=1$, $\Omega_1=1$ and $A_1=I$, then $T_{\Omega,\alpha}^m$ is indeed equivalent to $I_\alpha$, the Riesz Potential of order $\alpha$. While for the special case $m=1$ and $A_1=I$, set
\begin{align}\label{D2}
	\tilde T_{\Omega,\alpha}^1 f(x)=\int_{\R^n}\frac{\Omega(x- y)}{|x- y|^{n-\alpha}} f(y)dy
\end{align}
for suitable function $f$ where $\Omega$ is homogeneous of degree 0 on $\R^n$ and $\Omega\in L^{r}(S^{n-1})$ for some $r\geqslant1$.
Under several assumptions, there have been a series of boundedness results in weighted spaces. In \cite{DL98, DL00}, Ding and Lu gave the  $H^p(\R^n)-L^q(\R^n)$ , $H^p(\R^n)-H^q(\R^n)$ and $L^p_{\omega^p}(\R^n)-L^q_{\omega^q}(\R^n)$ boundedness of $\tilde T_{\Omega,\alpha}^1$ when $q=\frac{np}{n-\alpha p}$.
Ding, Lee and Lin obtain the  boundedness of $\tilde T_{\Omega,\alpha}^1$ from $H^p_{\omega^p}(\R^n)$ to $L^q_{\omega^q}(\R^n)$ and from $H^p_{\omega^p}(\R^n)$ to $H^q_{\omega^q}(\R^n)$ when $\alpha>0$ in \cite{DLL03,LL02}. While for $\alpha=0$, weighted estimates for $\tilde T_{\Omega,0}^1$ have been investigated in \cite{D93,KW79,W90}.

Weighted estimates have already been contributed to the more general case $m>1$, where the kernel has more than one singularity and meets certain H\"omander condition. For example, Godoy and Urciuolo obtained the $L^p(\R^n)-L^{\frac{np}{n-\alpha p}}(\R^n)$ boundedness of $T_{\Omega,\alpha}^m$ in \cite{GU99}  under appropriate assumptions, Rocha and Urciuolo obtained the $H^p(\R^n)-L^{\frac{np}{n-\alpha p}}(\R^n)$ boundedness of $T_{\Omega,\alpha}^m$ in \cite{RU12}, Riveros and Urciuolo obtained the weighted $L^p-L^{\frac{np}{n-\alpha p}}$, weighted $BMO$ and weak-type estimates of $T_{\Omega,\alpha}^m$ in \cite{RU14} as well as part of the weighted results on weighted Hardy space for $T_{\Omega,\alpha}^m$ in \cite{RU17}. The behaviors of the commutators associated with certain fractional type operators in weighted spaces and variable Lebesgue spaces have been discussed in \cite{IR18,IV23} recently.

Recall the definitions of the $Orlicz-Morrey$ space and the generalized $Campanato$ space:
\begin{definition}
	Let $\Phi$ be a Young function and $\phi: (0,\infty)\to(0,\infty)$. For any ball $B=B(x,R)$, set
	$$\|f\|_{\Phi,\phi,B}:=\inf\bigg\{\lambda>0: \frac{1}{\phi(R)}\fint_B \Phi\bigg(\frac{|f(x)|}{\lambda}\bigg)\,dx\leqslant1\bigg\}$$
	and
	$$\|f\|_{L(\Phi, \phi)}=\sup\limits_{B}\|f\|_{\Phi,\phi,B},$$
	where the supremum is taken over all balls $B\subset \R^n$.
	The $Orliz-Morrey$ space $L(\Phi, \phi)(\R^n)$ consists of all functions $f$ with  $\|f\|_{L(\Phi, \phi)}<\infty$ and is a Banach space.
\end{definition}

\begin{definition}
	For $\psi:(0,\infty)\to(0,\infty)$ and $1\leqslant p<\infty$, set
	$$\|f\|_{\mathcal{L}_{p,\psi}}:=\sup\limits_{B=B(x,R)}\frac{1}{\psi(R)}\bigg(\fint_B |f(y)-\fint_ B f|^p\bigg)^{\frac{1}{p}},$$
	where the supremum is taken over all balls $B\subset \R^n$. The generalized $Campanato$ space  $\mathcal{L}_{p,\psi}(\R^n)$ consists of all functions $f$ with  $\|f\|_{\mathcal{L}_{p,\psi}}<\infty$.
\end{definition}

How the commutators of integral operators with functions in $Campanato$ spaces behaves in $Orlicz-Morrey$ spaces have undergone extensive research in recent years. For instance, Nakai, Arai, Yamaguchi and Shi focused on  the commutators $[b,T]$ and $[b,I_\rho]$ where $b$ is in generalized $Campanato$ spaces, $T$ is the Calder\'on-Zygmund operator and $I_\rho$ is a generalized fractional integral operator. They obtained the results of boundedness and compactness for those operators in \cite{SAN21, YN22} where they showed that $[b,T]$ and $[b,I_\rho]$ are bounded from $L(\Phi,\phi)(\R^n)$ to $L(\Psi,\phi)(\R^n)$ for any  $b\in \mathcal{L}_{1,\psi}(\R^n)$ and compact from $L(\Phi,\phi)(\R^n)$ to $L(\Psi,\phi)(\R^n)$ for any $b\in \overline{C_c^\infty(\R^n)}^{\|\cdot\|_{\mathcal{L}_{1,\psi}}}$ under corresponding restrictions on $\phi$, $\Phi$ and $\Psi$.

Nonetheless, the boundedness and compactness for the operators defined by (\ref{D1}) and their commutators on various spaces are far from complete in case of the challenge of dealing with matrixes $\{A_i\}^m_{i=1}$. To this end, we will take into consideration a new set denoted by $BMO^A(\R^n)$, which will be defined in Section 3 as well as a set denoted by $\mathcal{L}^A_{1,\psi}(\R^n)$ which will be introduced in
Section 4. To our knowledge, these types of functions have never been defined before.

Inspired by the works \cite{HW22,RU14,RU17,SAN21,YN22}, the aim of this paper is to investigate how $[b,T_{\Omega,\alpha}^m]$ behaves in weighted spaces and $Orcliz-Morrey$ spaces, with a focus on the boundedness and compactness. Our results are comprehensive and systematic, covering most of the conclusions that are already known and limited to the relevant spaces.

The following is the order in which this paper is organized. Section 2 provides the necessary preliminaries for presenting the definitions and results in the following sections. Our main results are completely represented in Sections 3, 4, 5 which states the following:
\begin{itemize}
	\item[$\bullet$] $[b,T_{\Omega,\alpha}^m]$ can be extended to a bounded operator from $L^p_{\omega^p}(\R^n)$ to $L^q_{\omega^q}(\R^n)$ for $b\in BMO(\R^n)$ and $\omega$ satisfying $\omega^s\in \mathcal{A}(\frac{p}{s}.\frac{q}{s})$ where $s$ is a constant such that $\frac{1}{p_1}+\cdots+\frac{1}{p_m}+\frac{1}{s}=1$;
	
	\item[$\bullet$]  $[b,T_{\Omega,\alpha}^m]$ can be extended to a bounded operator from $H^p_{\omega^p}(\R^n)$ to $L^q_{\omega^q}(\R^n)$ for any $\omega$ satisfying $\omega^{\frac{sn}{n-\alpha s}} \in \mathcal{A}_1 $ and $b\in \cap_i BMO^{A_i}(\R^n)\cap BMO_{\omega^p,p}(\R^n)$;
	\\\\Besides, under several restrictions on $\phi$, $\Phi$ and $\Psi$, we obtain:
	\item[$\bullet$] $T_{\Omega,\alpha}^m$ is bounded from $L(\Phi,\phi)(\R^n)$ to $L(\Psi,\phi)(\R^n)$;
	\item[$\bullet$]  $[b,T_{\Omega,\alpha}^m]$ is bounded from $L(\Phi,\phi)(\R^n)$ to $L(\Psi,\phi)(\R^n)$ for $b\in\cap_{i=1}^m\mathcal{L}^{A_i}_{1,\psi}(\R^n)$;
	
	\item[$\bullet$] $[b,\tilde T_{\Omega,\alpha}^1]$ is compact from $L(\Phi,\phi)(\R^n)$ to $L(\Psi,\phi)(\R^n)$ for $b\in \overline{C_c^\infty(\R^n)}^{\|\cdot\|_{\mathcal{L}_{1,\psi}}}$.
\end{itemize}


\section{Preliminary}
The necessary notations, definitions, and lemmas are listed in this section for the proof of our main theorems.

\subsection{Notations}
The symbol $A\lesssim B$ means $A\leqslant CB$ for some constant $C$ and we denote $A\sim B$ if $A\lesssim B$ and $B\lesssim A$ hold at the same time. $B(x,r)$ denotes the ball with center $x$ and radius $r$ while $CB(x,r):=B(x,Cr)$ for any $C>0$ and $|B|$ denotes the Lebesgue measure of $B$. We briefly denote $B(0,R)$ by $B(R)$ for any $R>0$. For a positive constant $a$, $\left\lfloor a\right\rfloor$ stands for the integer part of $a$. For any function $f$ and ball $B$,  we denote by $\fint_{B} f$ the average value of $f$ on $B$, i.e. $\frac{1}{|B|} \int_B f$.

\subsection{Muckenhoupt Weights}
A non-negative measurable function $\omega$ lies in $\mathcal{A}_p$ with $1<p<\infty$ if
$$\sup\limits_{B} \bigg(\frac{1}{|B|}\int_{B}\omega(x)\,dx\bigg)\bigg(\frac{1}{|B|}\int_{B}\omega^{1-p'}(x)\,dx\bigg)^{p-1}<\infty,$$
where the supremum is taken over all balls $B\subset \R^n$ and in $\mathcal{A}_1$ if there exists $C>0$ such that
$$\frac{1}{|B|}\int_{B}\omega(x)\,dx\leqslant C ess \inf\limits_{x\in B} \omega(x)$$
for any ball $B\subset \R^n$.
If there exists $r>1$ and $C>0$ such that
$$\bigg(\frac{1}{|B|}\int_{B}\omega^r(x)\,dx\bigg)^{\frac{1}{r}}\leqslant C \frac{1}{|B|}\int_{B}\omega(x)\,dx$$
for any ball $B\subset \R^n$, then $\omega$ is said to satisfy the reverse H\"older inequality of order $r$ and is denoted by $\omega\in RH_r$.
It is well-known that for any $1\leqslant t\leqslant s<\infty$, $\mathcal{A}_t\subset \mathcal{A}_s$ and $RH_s \subset RH_t$. Denote $\mathcal{A}_\infty:=\cup_{p\geqslant1} \mathcal{A}_p$, $q_\omega:=\inf\{q>1: \ \omega\in \mathcal{A}_q\}$ and $r_\omega:=\sup\{r>1: \omega\in RH_r\}$. Besides, if $\omega\in \mathcal{A}_p$ for some $1\leqslant p \leqslant \infty$, then $\omega^\alpha \in \mathcal{A}_p$ for any $0<\alpha<1$ and $\omega^\beta \in \mathcal{A}_p$ for some $\beta>1$.
\begin{lemma}\rm(\cite{SW85})\em
	If $r>1$, then $\omega^r \in  \mathcal{A}_\infty$ if and only if $\omega\in RH_r$.
\end{lemma}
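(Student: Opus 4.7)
The plan is to exploit the Garc\'ia-Cuerva--Rubio de Francia exponential characterization of $\mathcal{A}_\infty$: a weight $\omega$ lies in $\mathcal{A}_\infty$ if and only if there exists $C>0$ such that
\[
\fint_B \omega \leqslant C \exp\Bigl(\fint_B \log \omega\Bigr)
\]
for every ball $B$. Since the $\mathcal{A}_\infty$ condition thus reduces to a comparison between arithmetic and geometric means, its interaction with the power map $\omega \mapsto \omega^r$ becomes transparent: a logarithm pulls the exponent $r$ outside as a constant, which can then be recombined with Jensen's inequality applied either to $\log$ (concave) or to $t \mapsto t^r$ (convex).

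For the direction $\omega^r \in \mathcal{A}_\infty \Rightarrow \omega \in RH_r$, I would apply the exponential characterization directly to $\omega^r$:
\[
\fint_B \omega^r \leqslant C \exp\Bigl(\fint_B \log \omega^r\Bigr) = C \Bigl[\exp\Bigl(\fint_B \log \omega\Bigr)\Bigr]^r \leqslant C \Bigl(\fint_B \omega\Bigr)^r,
\]
where the last step is Jensen for the concave function $\log$. Taking $r$-th roots recovers exactly the $RH_r$ inequality.

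For the converse $\omega \in RH_r \Rightarrow \omega^r \in \mathcal{A}_\infty$, I would first observe that the $RH_r$ inequality already forces $\omega \in \mathcal{A}_\infty$ by itself, via a standard H\"older argument: for any measurable $E\subset B$,
\[
\omega(E) \leqslant |E|^{1/r'} \Bigl(\int_B \omega^r\Bigr)^{1/r} \leqslant C \Bigl(\frac{|E|}{|B|}\Bigr)^{1/r'} \omega(B),
\]
which is the classical $\mathcal{A}_\infty$ estimate. Hence the exponential characterization is available for $\omega$ itself, and combining it with the $RH_r$ bound yields
\[
\fint_B \omega^r \leqslant C \Bigl(\fint_B \omega\Bigr)^r \leqslant C' \Bigl[\exp\Bigl(\fint_B \log \omega\Bigr)\Bigr]^r = C' \exp\Bigl(\fint_B \log \omega^r\Bigr),
\]
which is the same characterization now applied to the weight $\omega^r$, proving $\omega^r \in \mathcal{A}_\infty$.

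The main obstacle along a different route would be the need to exhibit an explicit $p$ with $\omega^r \in \mathcal{A}_p$; this would force invoking Gehring's self-improvement to upgrade $\omega \in RH_r$ to $\omega \in RH_{r(1+\epsilon)}$, from which Jensen gives $\omega^r \in RH_{1+\epsilon}$, before the H\"older-in-$E$ argument returns $\omega^r \in \mathcal{A}_\infty$. The exponential route I propose is preferable precisely because it bypasses Gehring entirely and reduces the whole equivalence to the arithmetic--geometric mean characterization of $\mathcal{A}_\infty$ sandwiched between two Jensen inequalities.
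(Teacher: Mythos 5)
Your proof is correct. The paper does not supply its own argument for this lemma---it is cited verbatim from Str\"omberg--Wheeden \cite{SW85}---so there is no internal proof to compare against; I therefore checked your argument on its own terms. Both Jensen applications are fine: the forward direction is exactly the arithmetic--geometric mean inequality $\exp\bigl(\fint_B\log\omega\bigr)\leqslant\fint_B\omega$ applied after the Hru\v{s}\v{c}ev--Garc\'ia-Cuerva--Rubio de Francia exponential characterization of $\mathcal{A}_\infty$ for $\omega^r$, and the converse correctly bootstraps from the quantitative $\mathcal{A}_\infty$ measure estimate $\omega(E)/\omega(B)\lesssim(|E|/|B|)^{1/r'}$ (obtained from $RH_r$ by H\"older) to the exponential characterization of $\omega$ itself, and then multiplies through by $r$ inside the exponential. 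Your remark that this bypasses Gehring self-improvement is apt: the more common textbook route derives $\omega\in\mathcal{A}_p$ for some explicit $p$ from $RH_r$ and then invokes the Johnson--Neugebauer power formula $\omega\in\mathcal{A}_p\cap RH_r\Leftrightarrow\omega^r\in\mathcal{A}_{r(p-1)+1}$, which is heavier machinery; your exponential argument is shorter and makes the logarithmic scaling under $\omega\mapsto\omega^r$ structurally transparent. One small presentational point: when you invoke "the exponential characterization is available for $\omega$ itself," it is worth a one-line reminder that the strong measure-ratio estimate $\omega(E)/\omega(B)\lesssim(|E|/|B|)^{\delta}$ is one of the standard equivalent definitions of $\mathcal{A}_\infty$ (so the passage to the Hru\v{s}\v{c}ev form is legitimate), since a reader might otherwise wonder which of the several $\mathcal{A}_\infty$ definitions you are using as the pivot.
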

\begin{lemma}\rm(\cite{J00,GW73})\em
	If $\omega\in \mathcal{A}_p\cap RH_r$ with $p\geqslant 1$ and $r>1$, then
	$$\bigg(\frac{|S|}{|B|}\bigg)^p\lesssim\frac{\omega(S)}{\omega(B)}\lesssim \bigg(\frac{|S|}{|B|}\bigg)^{\frac{r-1}{r}}$$
	for any measurable subset $S$ of a ball $B$.
\end{lemma}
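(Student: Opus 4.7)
The plan is to prove the two inequalities separately, each via a Hölder-type duality between an integral of $\omega$ over $S$ (or $B$) and the defining inequality for the corresponding weight class. The left bound will exploit the $\mathcal{A}_p$ condition on $\omega$, and the right bound will exploit the reverse Hölder inequality $RH_r$. In both cases the measurable subset $S$ of the ball $B$ enters only through the trivial monotonicity $\int_S \omega^{\pm\theta}\leqslant \int_B \omega^{\pm\theta}$ and through raw Lebesgue measure.

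For the left inequality, when $p>1$, I would apply Hölder's inequality with conjugate exponents $p$ and $p'$ to $|S|=\int_S \omega^{1/p}\omega^{-1/p}\,dx$, obtaining
\[
|S|^p \;\leqslant\; \omega(S)\Bigl(\int_S \omega^{1-p'}\Bigr)^{p-1} \;\leqslant\; \omega(S)\Bigl(\int_B \omega^{1-p'}\Bigr)^{p-1}.
\]
Then the $\mathcal{A}_p$ condition rewritten as $\bigl(\int_B \omega^{1-p'}\bigr)^{p-1}\lesssim |B|^p/\omega(B)$ immediately yields $(|S|/|B|)^p\lesssim \omega(S)/\omega(B)$. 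For $p=1$, the argument is even cleaner: the $\mathcal{A}_1$ definition gives $\omega(B)/|B|\lesssim \operatorname{ess\,inf}_{B}\omega$, and integrating the essential infimum over $S$ produces $\omega(B)|S|/|B|\lesssim \omega(S)$.

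For the right inequality, I would apply Hölder with exponents $r$ and $r'$ to $\omega(S)=\int_S \omega\cdot 1\,dx$, obtaining
\[
\omega(S)\;\leqslant\; |S|^{1/r'}\Bigl(\int_S \omega^{r}\Bigr)^{1/r}\;\leqslant\; |S|^{(r-1)/r}\Bigl(\int_B \omega^{r}\Bigr)^{1/r},
\]
and then apply the reverse Hölder hypothesis $\bigl(|B|^{-1}\int_B \omega^r\bigr)^{1/r}\lesssim |B|^{-1}\int_B \omega = \omega(B)/|B|$. Rearranging the resulting inequality by $|B|^{(r-1)/r}$ gives precisely $\omega(S)/\omega(B)\lesssim (|S|/|B|)^{(r-1)/r}$.

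Honestly there is no serious obstacle in this lemma; the proof is a direct two-line Hölder argument in each direction. The only mild care needed is bookkeeping of the $p=1$ case (which must be handled separately because $p'=\infty$), and making sure that both $\omega\in \mathcal{A}_p$ and $\omega\in RH_r$ are simultaneously in force only for the comparison and are invoked independently in the two halves of the proof.
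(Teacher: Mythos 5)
Your proof is correct and is the standard argument: the paper does not reprove this lemma but simply cites \cite{J00,GW73}, and the Hölder-duality computation you give (with the separate $p=1$ case via the essential infimum) is exactly the textbook proof appearing there.
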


\subsection{Young function, $\textbf{Orlicz-Morrey}$ space and generalized $\textbf{Campanato}$ space}

In this subsection, we briefly state some basic definitions and properties of certain classes of Young functions, $Orlicz$ space and other function spaces. For more details, see \cite{SAN21,YN22} and the references.
\begin{definition}
	(1) $\mathcal{G}^{inc}$ denotes the set of all functions $\phi:(0,\infty)\to(0,\infty)$ such that there exists $C>0$ satisfying
	$$\phi(t_1)\leqslant C\phi(t_2)\ \ \ and\ \ \ C\frac{\phi(t_1)}{t_1}\geqslant\frac{\phi(t_2)}{t_2}$$
	for any $0<t_1<t_2<\infty$.\\
	(2) $\mathcal{G}^{dec}$ denotes the set of all functions $\phi:(0,\infty)\to(0,\infty)$ such that there exists $C>0$ satisfying
	$$C\phi(t_1)\geqslant \phi(t_2)\ \ \ and\ \ \ \phi(t_1)t_1^n\leqslant C \phi(t_2)t_2^n$$
	for any $0<t_1<t_2<\infty$.
\end{definition}
\begin{remark}
	(1) If $\phi\in \mathcal{G}^{inc}\cup\mathcal{G}^{dec}$, then there exists $C>0$ such that
	$$\frac{1}{C}\phi(s)\leqslant \phi(t) \leqslant C\phi(s)\ \ \ \ if\ \ \ \ \frac{1}{2}s\leqslant t\leqslant 2s.$$
	(2)If $\phi\in \mathcal{G}^{dec}$ and
	there exists $C>0$ such that for any $r>0$
	\begin{align}\label{phi}
		\int_r^\infty \frac{\phi(t)}{t}\,dt\leqslant C \phi(r),
	\end{align}
	then $\lim\limits_{r\to \infty} \phi(r)=0\ \ and\ \ \lim\limits_{r\to 0} \phi(r)=\infty$.
\end{remark}

\begin{definition}
	(1) $\Phi: [0,\infty]\to[0,\infty]$ is said to be a Young function satisfying $\Delta_2$-condition, denoted by $\Phi\in\Delta_2$, if $\Phi$ is convex, increasing, left continuous,
	$$\lim\limits_{t\to 0^+} \Phi(t)=\Phi(0)=0,\ \ \lim\limits_{t\to \infty} \Phi(t)=\Phi(\infty)=\infty,$$
	and there exists $C>0$ such that $\Phi(2t)\leqslant C\Phi(t)$ for any $t>0$.\\
	(2) $\Phi: [0,\infty]\to[0,\infty]$ is said to be a Young function satisfying $\nabla_2$-condition, denoted by $\Phi\in\nabla_2$, if $\Phi$ is convex, increasing, left continuous,
	$$\lim\limits_{t\to 0^+} \Phi(t)=\Phi(0)=0,\ \ \lim\limits_{t\to \infty} \Phi(t)=\Phi(\infty)=\infty,$$
	and there exists $K>1$ such that $\Phi(t)\leqslant \frac{1}{2K}\Phi(Kt)$ for any $t>0$.
\end{definition}
\begin{remark}\label{R2}
	(1) If $\Phi\in  \Delta_2\cup\nabla_2$ and  $\Phi^{-1}$ denotes the inverse function of $\Phi$, then there exists $C>0$ such that
	$$\frac{1}{C}\Phi^{-1}(s)\leqslant \Phi^{-1}(t) \leqslant C\Phi^{-1}(s)\ \ \ \ if\ \ \ \ \frac{1}{2}s\leqslant t\leqslant 2s;$$
	(2) If $\Phi\in \nabla_2$, then there will exist $q>1$ and $C>0$ such that $\frac{\Phi(t)}{t^q}\leqslant C \frac{\Phi(s)}{s^q}$ for any $t<s$;\\	
	(3) If $\Phi\in \Delta_2$, then there will exist $q\geqslant 1$ and $C>0$ such that $\frac{\Phi(t)}{t^q}\geqslant C \frac{\Phi(s)}{s^q}$ for any $t<s$.
\end{remark}

Here and subsequently, $\Phi$ will stand for some Young function satisfying $\nabla_2$-condition or $\Delta_2$-condition. Let $\tilde\Phi\colon[0,\infty)\to[0,\infty)$ be the \emph{complementary   function} to $\Phi$, defined as
\begin{equation}\label{eq:complementary}
	\tilde\Phi(t) := \sup_{s\in(0,\infty)} \big\{st - \Phi(s)\big\}.
\end{equation}
This definition ensures that \emph{Young's inequality} holds:
\begin{equation}\label{eq:Youngsineq}
	s t \leqslant \Phi(s) + \tilde\Phi(t)\quad\text{for } s,t\in[0,\infty).
\end{equation}

Recall the definitions of the $Orlicz-Morrey$ space and the generalized $Campanato$ space defined before and then we proceed with the following definitions and related lemmas.
\begin{definition}$(Orlicz$ space$)$
	Set $$\|f\|_{L^\Phi}:=\inf\bigg\{\lambda>0:\int_{\R^n}
	\Phi(\frac{|f(x)|}{\lambda})\,dx\leqslant 1\bigg\}$$
	and the $Orlicz$ space $L^\Phi(\R^n)$ is the set of all measurable function $f$ with $\|f\|_{L^\Phi}<\infty$.
\end{definition}

\begin{lemma}\label{Pro}\rm(\cite{SAN21})\em
	(1) If $\Phi\in \Delta_2\cap\nabla_2$, $\phi\in \mathcal{G}^{dec}$, then
	$$\|\chi_{B(x,R)}\|_{L(\Phi,\phi)}\sim \frac{1}{\Phi^{-1}(\phi(R))};$$
	(2) If $\Phi\in \Delta_2\cup\nabla_2$,
	$$\fint_{B(x,R)} |f(y|\,dy\leqslant 2 \Phi^{-1}(\phi(R))\|f\|_{\Phi,\phi,B(x,R)};$$
	(3) If $\Phi\in \nabla_2$, then there exists $p>1$ such that
	$$\bigg(\fint_{B(x,R)}|f(y)|^p\,dy\bigg)^{\frac{1}{p}}\lesssim \Phi^{-1}(\phi(R))\|f\|_{\Phi,\phi,B(x,R)}$$
	and we denote the set of all $p$ satisfying the above condition by $p_{\nabla_2}(\Phi)$.
\end{lemma}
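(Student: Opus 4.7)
The plan is to establish the three statements in order, since parts (2) and (3) both rely on Jensen-type arguments anchored in the structural information about $\Phi$ encoded in Remark \ref{R2}.

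For part (1), I would first compute the seminorm on $B(x,R)$ itself. By the definition of $\|\cdot\|_{\Phi,\phi,B}$, one needs the smallest $\lambda>0$ with $\Phi(1/\lambda)\leqslant \phi(R)$, which by monotonicity of $\Phi$ yields $\|\chi_{B(x,R)}\|_{\Phi,\phi,B(x,R)}=1/\Phi^{-1}(\phi(R))$ up to a bounded factor coming from the quasi-inverse ambiguity controlled by Remark \ref{R2}(1). This already supplies the lower bound $\|\chi_{B(x,R)}\|_{L(\Phi,\phi)}\gtrsim 1/\Phi^{-1}(\phi(R))$. For the matching upper bound I would test against an arbitrary ball $B(y,r)$, split into cases according to whether $r\leqslant R$ or $r>R$, and use both directions of the $\mathcal{G}^{dec}$ condition ($\phi$ is essentially decreasing and $\phi(t)t^n$ is essentially increasing) together with Remark \ref{R2}(1), to reduce every configuration to the diagonal case $B(y,r)=B(x,R)$, absorbing the losses into uniform constants.

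For part (2), the argument is Jensen's inequality applied to the convex function $\Phi$. Fix any $\lambda>\|f\|_{\Phi,\phi,B(x,R)}$; by definition $\fint_{B(x,R)}\Phi(|f(y)|/\lambda)\,dy\leqslant \phi(R)$. Jensen gives
$$\Phi\bigg(\fint_{B(x,R)}\frac{|f(y)|}{\lambda}\,dy\bigg) \leqslant \fint_{B(x,R)} \Phi\bigg(\frac{|f(y)|}{\lambda}\bigg)\,dy \leqslant \phi(R),$$
so applying $\Phi^{-1}$ and letting $\lambda\searrow\|f\|_{\Phi,\phi,B(x,R)}$ yields the estimate; the factor $2$ absorbs the quasi-inverse discrepancy from Remark \ref{R2}(1). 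For part (3), I would upgrade this $L^1$ average to an $L^p$ average using the $\nabla_2$ condition. By Remark \ref{R2}(2), there exists $q>1$ such that $\Phi(t)/t^q$ is essentially increasing, and for any $1<p\leqslant q$ the composition $\Psi(t):=\Phi(t^{1/p})$ is still a Young function. Applying part (2) to $|f|^p$ with the Young function $\Psi$, and using the identities $\Psi^{-1}(s)=(\Phi^{-1}(s))^p$ and $\|\,|f|^p\,\|_{\Psi,\phi,B}=\|f\|_{\Phi,\phi,B}^p$ that follow directly from the definitions, produces $\fint_{B(x,R)}|f|^p\lesssim (\Phi^{-1}(\phi(R)))^p\|f\|_{\Phi,\phi,B(x,R)}^p$, and taking $p$-th roots gives the claim. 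The set $p_{\nabla_2}(\Phi)$ is then precisely the range of $p$ for which this construction succeeds.

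The main obstacle will be the upper bound in part (1): while the situation on $B(x,R)$ itself is transparent, sweeping across all balls $B(y,r)$ requires a careful case split on the relative size of $r$ and $R$ plus bookkeeping of the doubling-type constants coming from $\phi\in\mathcal{G}^{dec}$ and the quasi-invariance of $\Phi^{-1}$ under dyadic rescalings. A secondary subtlety in part (3) is that the defining $\nabla_2$ inequality $\Phi(t)\leqslant \Phi(Kt)/(2K)$ does not immediately yield a concrete lower index $q>1$; one must iterate it to extract a power-type minorant of $\Phi$ and then verify that the resulting $\Psi$ is genuinely convex before Jensen can be invoked cleanly.
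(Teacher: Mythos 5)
Note first that the paper does not prove this lemma; it is simply recalled with the citation \cite{SAN21}, so there is no in-paper argument to compare against. On its own terms, your plan is sound for parts (1) and (2). In (1) the diagonal computation on $B(x,R)$ is actually exact, $\|\chi_{B(x,R)}\|_{\Phi,\phi,B(x,R)}=1/\Phi^{-1}(\phi(R))$, with no quasi-inverse fudging needed since $\Phi\in\Delta_2\cap\nabla_2$ forces $\Phi$ to be a genuine bijection of $[0,\infty)$; the off-diagonal case split on $r\leqslant R$ versus $r>R$, using that $\phi$ is essentially decreasing and $\phi(t)t^n$ essentially increasing together with Remark~\ref{R2}(1), does close the upper bound. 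In (2) Jensen plus the fact that $\Phi^{-1}$ is the honest inverse yields the estimate with constant $1$, so the factor $2$ in the statement is merely a safe over-count.

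Part (3) as written has a genuine gap that your own closing remark does not actually repair. You define $\Psi(t)=\Phi(t^{1/p})$ and apply part (2) to $|f|^p$; the identities $\Psi^{-1}(s)=\big(\Phi^{-1}(s)\big)^p$ and $\||f|^p\|_{\Psi,\phi,B}=\|f\|_{\Phi,\phi,B}^p$ are correct, but the claim that ``$\Psi$ is still a Young function'' is not automatic. Convexity of $\Psi$ is equivalent to $u\mapsto \Phi'(u)/u^{p-1}$ being nondecreasing, and this is a strictly stronger condition than the lower-type bound $\Phi(st)\gtrsim s^{q}\Phi(t)$ that $\nabla_2$ supplies; extracting a ``power-type minorant of $\Phi$'' does not give it either, since a minorant says nothing about convexity of the composition. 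One would have to replace $\Phi$ by an equivalent Young function engineered for this purpose, which is precisely the technical content you are skipping. The cleaner route is to avoid $\Psi$ entirely: iterate the $\nabla_2$ inequality $\Phi(t)\leqslant\frac{1}{2K}\Phi(Kt)$ to obtain $q=\log_K(2K)>1$ with $\Phi(ut)\gtrsim u^q\Phi(t)$ for all $u\geqslant 1$; then, with $t_0=\Phi^{-1}(\phi(R))$ and $\lambda>\|f\|_{\Phi,\phi,B(x,R)}$, split
\begin{align*}
\fint_{B(x,R)}\Big(\frac{|f|}{\lambda t_0}\Big)^{q}\,dy
&=\fint_{\{|f|\leqslant\lambda t_0\}}\Big(\frac{|f|}{\lambda t_0}\Big)^{q}\,dy
+\fint_{\{|f|>\lambda t_0\}}\Big(\frac{|f|}{\lambda t_0}\Big)^{q}\,dy\\
&\leqslant 1+\frac{C}{\phi(R)}\fint_{B(x,R)}\Phi\Big(\frac{|f|}{\lambda}\Big)\,dy\leqslant 1+C,
\end{align*}
which gives $\big(\fint_{B(x,R)}|f|^q\big)^{1/q}\lesssim\Phi^{-1}(\phi(R))\|f\|_{\Phi,\phi,B(x,R)}$ directly, with no convexity side condition.
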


\begin{lemma}\rm(\cite[Corollary 4.3]{AN18})
	\em If $\psi\in \mathcal{G}^{inc}$ and $1<p<\infty$, then
	$$\mathcal{L}_{1,\psi}(\R^n)=\mathcal{L}_{p,\psi}(\R^n)\ \ \ and \ \ \ \|\cdot\|_{\mathcal{L}_{1,\psi}}\sim\|\cdot\|_{\mathcal{L}_{p,\psi}},$$
	where the supremum is taken over all balls $B\subset \R^n$.
\end{lemma}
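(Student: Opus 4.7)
The plan is to prove the two inclusions separately, with the nontrivial direction handled through a John--Nirenberg type estimate adapted to the generalized Campanato setting.

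First, the easy direction $\mathcal{L}_{p,\psi}(\R^n) \subset \mathcal{L}_{1,\psi}(\R^n)$: for any $f \in \mathcal{L}_{p,\psi}(\R^n)$ and any ball $B = B(x,R)$, Jensen's (or H\"older's) inequality gives
\[
\frac{1}{\psi(R)}\fint_B \bigl|f(y) - \fint_B f\bigr|\,dy \;\leqslant\; \frac{1}{\psi(R)}\left(\fint_B \bigl|f(y) - \fint_B f\bigr|^p\,dy\right)^{1/p} \;\leqslant\; \|f\|_{\mathcal{L}_{p,\psi}},
\]
so $\|f\|_{\mathcal{L}_{1,\psi}} \leqslant \|f\|_{\mathcal{L}_{p,\psi}}$. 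This uses nothing about $\psi$ beyond its being positive.

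For the harder inclusion $\mathcal{L}_{1,\psi}(\R^n) \subset \mathcal{L}_{p,\psi}(\R^n)$, I would establish a John--Nirenberg style distribution inequality: there exist constants $C_1, C_2 > 0$ such that for every ball $B = B(x,R)$ and every $\lambda > 0$,
\[
\bigl|\{y \in B : |f(y) - f_B| > \lambda\}\bigr| \;\leqslant\; C_1 |B|\,\exp\!\left(-\frac{C_2 \lambda}{\psi(R)\,\|f\|_{\mathcal{L}_{1,\psi}}}\right).
\]
The proof proceeds by a Calder\'on--Zygmund stopping-time decomposition performed inside $B$. Fix a normalization $\lambda_0 \sim \psi(R)\,\|f\|_{\mathcal{L}_{1,\psi}}$, and apply the dyadic CZ decomposition to $|f-f_B|$ at level $\lambda_0$ to obtain pairwise disjoint sub-balls $\{Q_j\} \subset B$ on whose parents the average of $|f-f_B|$ exceeds $\lambda_0$, with $|f-f_B| \leqslant \lambda_0$ a.e.\ off $\bigcup_j Q_j$ and $\sum_j |Q_j| \leqslant \tfrac12 |B|$. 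On each $Q_j$ one has $|f_{Q_j} - f_B| \lesssim \lambda_0$, and then one recenters and iterates the same decomposition on $Q_j$ for $f - f_{Q_j}$. After $k$ iterations one obtains $\bigl|\{y \in B : |f(y)-f_B| > Ck\lambda_0\}\bigr| \leqslant 2^{-k}|B|$, which is equivalent to the exponential decay above. Once this is in hand, integrating the distribution function yields
\[
\fint_B |f - f_B|^p\,dy \;\lesssim\; \bigl(\psi(R)\,\|f\|_{\mathcal{L}_{1,\psi}}\bigr)^p,
\]
and taking the supremum over balls gives $\|f\|_{\mathcal{L}_{p,\psi}} \lesssim \|f\|_{\mathcal{L}_{1,\psi}}$.

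The main obstacle, and the place where the hypothesis $\psi \in \mathcal{G}^{inc}$ enters decisively, is the iteration step: when one descends to a sub-ball $Q_j$ of radius $r_j < R$, the $\mathcal{L}_{1,\psi}$-norm budget available at the new scale is $\psi(r_j)\,\|f\|_{\mathcal{L}_{1,\psi}}$, and one must guarantee that $\psi(r_j) \lesssim \psi(R)$ so that the stopping-time level can be kept at $\lambda_0$ throughout the iteration. This is precisely the quasi-increasing condition $\psi(t_1) \leqslant C\psi(t_2)$ for $t_1 < t_2$ built into $\mathcal{G}^{inc}$; without it the exponential decay would degrade. The companion condition that $\psi(t)/t$ is quasi-decreasing is not needed for this lemma itself but ensures that $\mathcal{L}_{p,\psi}$ is a reasonable space (for instance, not reducing to constants). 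The reverse H\"older equivalence $\|\cdot\|_{\mathcal{L}_{1,\psi}} \sim \|\cdot\|_{\mathcal{L}_{p,\psi}}$ then follows directly from combining the two bounds.
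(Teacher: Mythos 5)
The paper does not supply its own proof of this lemma: it is quoted directly from \cite[Corollary~4.3]{AN18}, so there is no internal argument to compare against. Your reconstruction via a John--Nirenberg distribution inequality for $\mathcal{L}_{1,\psi}(\R^n)$ is the standard route and, to the best of my knowledge, essentially the one taken in the cited reference (where a John--Nirenberg-type estimate for generalized Campanato spaces is established first and this corollary is deduced from it). The skeleton of your argument is sound: the easy inclusion by H\"older, the iterated Calder\'on--Zygmund stopping time giving $\bigl|\{y\in B:|f(y)-f_B|>Ck\lambda_0\}\bigr|\leqslant 2^{-k}|B|$ with $\lambda_0\sim\psi(R)\|f\|_{\mathcal{L}_{1,\psi}}$, and the integration of the distribution function. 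You correctly identify that the quasi-increasing half of the $\mathcal{G}^{inc}$ hypothesis is what lets the stopping level be held fixed at $\lambda_0$ as one descends to sub-scales $r_j<R$, since it forces $\psi(r_j)\lesssim\psi(R)$; the quasi-decreasing condition on $\psi(t)/t$ plays no role in this particular equivalence.

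Two small remarks worth recording if this were expanded into a full proof. First, the CZ decomposition is most naturally run on dyadic cubes while $\|\cdot\|_{\mathcal{L}_{1,\psi}}$ is defined over balls, so one must pass between a cube $Q$ and a comparable ball; this is painless because $\psi\in\mathcal{G}^{inc}$ is doubling in the sense recorded in the paper's Remark~2.4(1), so $\psi(\ell(Q))\sim\psi(\mathrm{diam}\,Q)$ and the triangle inequality controls the recentering of averages. Second, when one recenters on a selected cube $Q_j$ and restarts the stopping time for $f-f_{Q_j}$, the jump $|f_{Q_j}-f_B|$ is bounded by a fixed multiple of $\lambda_0$ precisely because the parent of $Q_j$ had controlled average; this is what keeps the ladder spacing uniform and yields the exponential rate. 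With those two points made explicit, the argument is complete and matches the standard treatment.
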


\begin{definition}$(Orlicz-Campanato$ space$)$
	For any function $\phi: (0,\infty)\to(0,\infty)$, set
	$$\mathcal{L}(\Phi, \phi)(\R^n)=\{f\in L^1_{loc}(\R^n): \|f\|_{\mathcal{L}(\Phi,\phi)}:=\sup\limits_B \bigg\|f-\fint_B f\bigg\|_{\Phi,\phi,B}<\infty\}.$$
\end{definition}
\subsection{Maximal function}
We present the definitions and properties of several related maximal operators in this subsection for later use.
\begin{definition}
	For any function $\psi: (0,\infty)\to (0,\infty)$, define
	$$M_\psi f(x)=\sup\limits_{B(x,R)}\psi(R)\fint_{B(x,R)}|f(y)|\,dy.$$
	Given $0<\alpha<n$ and $s\geqslant1$, we define the  fractional maximal operators as
	$$M_\alpha f(x)=\sup\limits_{B(x,R)}R^\alpha
	\fint_{B(x,R)}|f(y)|\,dy$$
	and
	$$M_{\alpha,s}f(x)=\sup\limits_{B(x,R)}R^\alpha\bigg(\fint_{B(x,R)}|f(y)|^s\,dy\bigg)^{\frac{1}{s}}$$
	while the sharp maximal operator is defined by
	$$M^\sharp f(x)=\sup\limits_{B(x,R)}\fint_{B(x,R)}\bigg|f(y)-\fint_{B(x,R)} f\bigg|\,dy,$$
	where each supremum above is taken over all balls $B(x,R)\subset \R^n$.
\end{definition}

\begin{lemma}\label{M Bds}\rm(\cite{SAN21}) \em
	Let $\Phi,\Psi\in \Delta_2\cap \nabla_2$, $\phi\in\mathcal{G}^{dec}$ and $\psi:(0,\infty)\to (0,\infty)$. Assume that $\lim\limits_{r\to \infty} \phi(r)=0$ and there exists $C>0$ satisfying
	$\psi(t)\leqslant C\psi(s)$ for any $t<s$. If there exists $D>0$ such that $$\phi(R)\Phi^{-1}(\phi(R)) \leqslant D \Psi^{-1}(\phi(R))$$ for any $R>0$, then $M_\psi$ is bounded from $L(\Phi,\phi)(\R^n)$ to $L(\Psi,\phi)(\R^n)$.
\end{lemma}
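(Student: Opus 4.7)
The plan is to derive a pointwise estimate for $M_\psi f(y)$ on each ball $B_0 = B(x_0, R_0)$ and then convert it to the desired $L(\Psi,\phi)$ bound via the structural inequality relating $\Phi$, $\Psi$ and $\phi$. Fix such a $B_0$, write $f = f_1 + f_2$ with $f_1 = f\chi_{2B_0}$, and for $y \in B_0$ split the supremum in the definition of $M_\psi f(y)$ according to whether the averaging radius $r$ satisfies $r \le R_0$ (local) or $r > R_0$ (non-local).

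For the local regime, the quasi-monotonicity $\psi(r) \le C\psi(R_0)$ together with the containment $B(y,r) \subset 2B_0$ (which allows us to replace $f$ by $f_1$ on the averaging ball) gives
\begin{equation*}
\sup_{r \le R_0} \psi(r) \fint_{B(y,r)} |f(z)|\,dz \;\le\; C\psi(R_0)\, Mf_1(y),
\end{equation*}
with $M$ the Hardy--Littlewood maximal operator. For the non-local regime, Lemma 2.5(2) applied on $B(y,r)$ yields $\fint_{B(y,r)}|f(z)|\,dz \le 2\Phi^{-1}(\phi(r))\|f\|_{L(\Phi,\phi)}$; the hypothesis $\phi(R)\Phi^{-1}(\phi(R)) \le D\Psi^{-1}(\phi(R))$ then upgrades this to an estimate in terms of $\Psi^{-1}(\phi(r))/\phi(r)$. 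Using $\phi \in \mathcal{G}^{dec}$ (so $\phi$ is quasi-decreasing and $\phi(r)r^n$ is quasi-increasing), the near-doubling of $\Psi^{-1}$ provided by Remark 2.4(1), and $\lim_{r\to\infty}\phi(r)=0$, I would anchor the resulting supremum over $r > R_0$ at $r = R_0$.

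Assembling the two regimes gives a pointwise bound of the form
\begin{equation*}
M_\psi f(y) \;\lesssim\; \psi(R_0)\,Mf_1(y) \;+\; \|f\|_{L(\Phi,\phi)}\,\Psi^{-1}(\phi(R_0)), \qquad y \in B_0.
\end{equation*}
The constant-in-$y$ second term contributes at most $C\|f\|_{L(\Phi,\phi)}$ to $\|M_\psi f\|_{\Psi,\phi,B_0}$ by Lemma 2.5(1). For the first term I would combine the $L^p$-boundedness of $M$ (with exponent $p \in p_{\nabla_2}(\Phi)$ supplied by Lemma 2.5(3)) with H\"older's inequality and the hypothesis $\phi\Phi^{-1}(\phi) \le D\Psi^{-1}(\phi)$ to convert the resulting Orlicz-norm estimate of $f_1$ on $2B_0$ into the desired $\|f\|_{L(\Phi,\phi)}$-type bound, absorbing the $\psi(R_0)$ factor along the way. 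Taking the supremum over $B_0$ completes the argument.

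The main obstacle is the non-local estimate: the hypothesis only compares $\Phi$, $\Psi$, and $\phi$ at a single scale $R$, whereas the supremum over all $r > R_0$ of the product $\psi(r)\Phi^{-1}(\phi(r))$ could a priori blow up as $r \to \infty$. The proof must carefully exploit the quasi-monotonicity of $\psi$, the quantitative $\mathcal{G}^{dec}$ bounds on $\phi$, and the decay $\phi(r) \to 0$ to tame this supremum and anchor it at $r = R_0$; this is the step that forces the particular form of the comparison between $\Phi$ and $\Psi$ postulated in the hypothesis.
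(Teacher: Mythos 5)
Your decomposition into a local regime ($r\le R_0$, controlled by the Hardy--Littlewood maximal function of $f\chi_{2B_0}$) and a non-local regime ($r>R_0$, controlled via Lemma~\ref{Pro}(2)) is exactly the right route, and the surrounding scaffolding — anchoring the non-local contribution at the fixed scale $R_0$, applying Lemma~\ref{Pro}(1) to the resulting constant, and using the $L^p$-boundedness of $M$ with $p\in p_{\nabla_2}(\Phi)$ for the local term — is sound. But the obstacle you flag in your final paragraph is not something that can be tamed by ``carefully exploiting'' the listed hypotheses: as printed, the condition $\phi(R)\Phi^{-1}(\phi(R))\le D\Psi^{-1}(\phi(R))$ constrains only $\phi,\Phi,\Psi$ and says nothing quantitative about $\psi$ beyond quasi-monotonicity. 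A quasi-increasing $\psi$ may grow arbitrarily fast, so the product $\psi(r)\Phi^{-1}(\phi(r))$ in your non-local estimate is genuinely uncontrolled, and the lemma as literally written is false.

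What you have detected is a typo in the hypothesis. Cross-checking with how the lemma is invoked later in the paper — for $M_{\psi^\eta}$ under $\psi(R)\Theta^{-1}(\phi(R))\lesssim\Psi^{-1}(\phi(R))$, for $M_{((\cdot)^\alpha\psi(\cdot))^\eta}$ under $R^\alpha\psi(R)\Phi^{-1}(\phi(R))\lesssim\Psi^{-1}(\phi(R))$, and for $M_{\alpha,s}$ (equivalently $M_{\psi}$ with $\psi(R)=R^{\alpha s}$ after the substitution $\Phi\mapsto\Phi(\cdot^{1/s})$, $\Psi\mapsto\Psi(\cdot^{1/s})$) under $R^\alpha\Phi^{-1}(\phi(R))\lesssim\Psi^{-1}(\phi(R))$ — shows that in every application the weight of the maximal operator multiplies $\Phi^{-1}(\phi)$. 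The hypothesis should therefore read
$$
\psi(R)\,\Phi^{-1}\big(\phi(R)\big)\;\leqslant\;D\,\Psi^{-1}\big(\phi(R)\big)\qquad\text{for all }R>0,
$$
with $\psi(R)$ in place of $\phi(R)$. Once corrected, your non-local regime closes immediately: for $y\in B_0$ and $r>R_0$, Lemma~\ref{Pro}(2) gives
$$
\psi(r)\fint_{B(y,r)}|f|\;\leqslant\;2\,\psi(r)\Phi^{-1}(\phi(r))\,\|f\|_{L(\Phi,\phi)}\;\leqslant\;2D\,\Psi^{-1}(\phi(r))\,\|f\|_{L(\Phi,\phi)}\;\lesssim\;\Psi^{-1}(\phi(R_0))\,\|f\|_{L(\Phi,\phi)},
$$
the last step using $\phi\in\mathcal{G}^{dec}$ (so $\phi(r)\lesssim\phi(R_0)$), the monotonicity of $\Psi^{-1}$, and the near-doubling of $\Psi^{-1}$ from Remark~\ref{R2}(1); the assumption $\lim_{r\to\infty}\phi(r)=0$ ensures the non-local average is finite at all. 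With this, your proposed pointwise bound on $B_0$ holds and the rest of your outline goes through; no additional taming of the supremum is required.
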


\begin{lemma}\rm(\cite[Corollary 6.3]{SAN21})\em\label{em M1}
	Let $\Phi\in \Delta_2$ and $\phi\in$ $\mathcal{G}^{dec}$. If $f\in L^1_{loc}(\R^n)$ satisfies $\lim\limits_{r\to\infty}\fint_{B(r)} f(x)\,dx=0$ and $\phi$ satisfies \rm(\ref{phi})\em , then
	$\|M^\sharp f\|_{\mathcal{L}(\Phi, \phi)}\sim \|f\|_{L(\Phi,\phi)}$.
\end{lemma}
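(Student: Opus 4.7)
The plan is to establish the two inequalities separately, each time reducing to manageable pieces via the structure of Orlicz-Morrey spaces together with the pointwise control $M^\sharp f \leqslant 2 Mf$. For the easy direction $\|M^\sharp f\|_{\mathcal{L}(\Phi,\phi)} \lesssim \|f\|_{L(\Phi,\phi)}$, I would first establish the embedding $L(\Phi,\phi) \hookrightarrow \mathcal{L}(\Phi,\phi)$: for any locally integrable $g$ and any ball $B=B(x_0,R)$, the triangle inequality for the Luxemburg norm gives $\|g - g_B\|_{\Phi,\phi,B} \leqslant \|g\|_{\Phi,\phi,B} + \|g_B\|_{\Phi,\phi,B}$, and since $g_B$ is constant on $B$, a direct computation yields $\|g_B\|_{\Phi,\phi,B} = |g_B|/\Phi^{-1}(\phi(R))$, which Lemma \ref{Pro}(2) bounds by $2\|g\|_{\Phi,\phi,B}$. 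Combining this with $M^\sharp f \leqslant 2Mf$ reduces the task to $\|Mf\|_{L(\Phi,\phi)} \lesssim \|f\|_{L(\Phi,\phi)}$, which can be obtained by a standard local-plus-tail splitting exploiting the $\Delta_2$ condition on $\Phi$ and the $\mathcal{G}^{dec}$ property of $\phi$.

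For the reverse inequality $\|f\|_{L(\Phi,\phi)} \lesssim \|M^\sharp f\|_{\mathcal{L}(\Phi,\phi)}$, I would fix a ball $B=B(x_0,R)$ and split $f = (f-f_B) + f_B$. The oscillation term is handled by an Orlicz-valued local Fefferman--Stein type inequality $\|f-f_B\|_{\Phi,\phi,B} \lesssim \|M^\sharp f\|_{\Phi,\phi,B}$, whose right-hand side further decomposes via the same triangle-inequality trick as $\|M^\sharp f - (M^\sharp f)_B\|_{\Phi,\phi,B} + |(M^\sharp f)_B|/\Phi^{-1}(\phi(R))$, with the first summand bounded directly by $\|M^\sharp f\|_{\mathcal{L}(\Phi,\phi)}$. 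For the constant $f_B$, I would exploit the vanishing hypothesis $\lim_{r\to\infty}f_{B(r)}=0$ to telescope $f_B = \sum_{j=0}^{\infty}(f_{2^jB} - f_{2^{j+1}B})$, estimate each difference by the pointwise bound $|f_{2^jB}-f_{2^{j+1}B}| \lesssim \fint_{2^{j+1}B} M^\sharp f(y)\,dy$, convert the average to $\Phi^{-1}(\phi(2^{j+1}R))\,\|M^\sharp f\|_{\Phi,\phi,2^{j+1}B}$ through Lemma \ref{Pro}(2), split $M^\sharp f$ once more into oscillation plus average on each dyadic dilate, and sum the resulting series using condition (\ref{phi}) together with the $\mathcal{G}^{dec}$ property of $\phi$.

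The main obstacle I anticipate is the combinatorics at the end of this telescoping: each iteration introduces a residual average $(M^\sharp f)_{2^{j+1}B}$ that is not itself controlled by $\|M^\sharp f\|_{\mathcal{L}(\Phi,\phi)}$, so one must either perform a secondary telescoping (relating these averages back to $f$-averages and invoking the vanishing hypothesis on $f$ once more) or carefully absorb them into a uniformly convergent geometric series. Making this work depends delicately on the decay produced by $\int_r^\infty \phi(t)/t\,dt \lesssim \phi(r)$ in tandem with the quasi-monotonicity of $\Phi^{-1}\circ\phi$ coming from Remark \ref{R2}(1); matching this decay against the doubling of $|2^jB|$ is precisely the point where the hypotheses on $\Phi$ and $\phi$ must be used in concert.
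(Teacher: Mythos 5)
The paper does not prove this lemma; it is quoted from \cite[Corollary~6.3]{SAN21} and used as a black box, so there is no internal proof to compare against. Evaluating your sketch on its own terms, there are two genuine gaps, one in each direction.

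In the ``easy'' direction you reduce, via $M^\sharp f\leqslant 2Mf$ and the embedding $L(\Phi,\phi)\hookrightarrow\mathcal{L}(\Phi,\phi)$, to the boundedness of the Hardy--Littlewood maximal operator on $L(\Phi,\phi)$, and you claim this ``can be obtained by a standard local-plus-tail splitting exploiting the $\Delta_2$ condition on $\Phi$.'' That is not enough: even for plain Orlicz spaces $L^\Phi$, the maximal operator is bounded if and only if $\Phi\in\nabla_2$, and the statement only assumes $\Phi\in\Delta_2$. The hypotheses allow $\Phi(t)=t$ and $\phi(t)=t^{-n}$ (both satisfy $\Delta_2$, $\mathcal{G}^{dec}$ and (\ref{phi})), in which case $L(\Phi,\phi)=L^1(\R^n)$ and $M$ is certainly unbounded there. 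The reason the lemma places the Campanato quasinorm $\mathcal{L}(\Phi,\phi)$, not $L(\Phi,\phi)$, on the left-hand side is precisely so that one does \emph{not} need $Mf$; passing through $Mf$ discards the one feature of the statement that makes it plausible under $\Delta_2$ alone, and the claim $\|Mf\|_{L(\Phi,\phi)}\lesssim\|f\|_{L(\Phi,\phi)}$ as you invoke it is simply false in this generality.

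In the ``hard'' direction the telescoping of $f_B$ along dyadic dilates is the right move, but you yourself identify the obstruction and leave it open: after bounding $|f_{2^jB}-f_{2^{j+1}B}|\lesssim\fint_{2^{j+1}B}M^\sharp f$ and converting to $\Phi^{-1}(\phi(2^{j+1}R))\,\|M^\sharp f\|_{\Phi,\phi,2^{j+1}B}$, the split of $\|M^\sharp f\|_{\Phi,\phi,2^{j+1}B}$ into oscillation plus average reintroduces $(M^\sharp f)_{2^{j+1}B}$, which the Campanato norm does not control, and the sketch stops at ``one must either perform a secondary telescoping\ldots or carefully absorb them.'' This is the crux of the proof, not a routine loose end: one must exploit the sharper pointwise bound $|f_{2^jB}-f_{2^{j+1}B}|\lesssim\inf_{y\in 2^{j+1}B}M^\sharp f(y)$ and a further comparison relating this infimum back to $f$-averages on still larger balls, so that the vanishing hypothesis $\lim_{r\to\infty}\fint_{B(r)}f=0$ and condition (\ref{phi}) can actually be brought to bear. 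As written, the argument is not closed. Finally, the local Orlicz Fefferman--Stein inequality $\|f-f_B\|_{\Phi,\phi,B}\lesssim\|M^\sharp f\|_{\Phi,\phi,B}$ is merely asserted; it requires a good-$\lambda$ argument adapted to the Luxemburg norm and is not among the lemmas the paper provides, so it would need to be proved rather than invoked.
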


\section{The weighted estimates for  the commutator $[b, T_{\Omega,\alpha}^m]$}
Given any $\alpha\in[0,n)$, $ m\geqslant1$, $ \sum_{i=1}^m\frac{n}{q_i}=n-\alpha$ and suitable function $f$, recall the definition of $T_{\Omega,\alpha}^m f$,
\begin{align}\label{def}
	T_{\Omega,\alpha}^m f(x)=\int_{\R^n}\frac{\Omega_1(x-A_1 y)}{|x-A_1 y |^{\frac{n}{q_1}}} \cdots \frac{\Omega_m(x-A_m y)}{|x-A_m y |^{\frac{n}{q_m}}} f(y)dy,
\end{align}
and we  denote the kernel of  $T_{\Omega,\alpha}^m$ by $K(x,y)$ through out this paper.
In this section we make the following assumptions:
\begin{align}\label{A}
	A_i\ \  is\ \ invertible,\ \  A_i-A_j\  \ is\ invertible \ \ and\ \  \omega(A_ix) \lesssim \omega(x)\ a.e.,
\end{align}
$p_i>q_i>0$ and set $s\geqslant 1$ such that $\frac{1}{p_1}+\cdots+\frac{1}{p_m}+\frac{1}{s}=1$.
Besides, assume that $\Omega_i$ is homogeneous of degree 0 on $\R^n$, $\Omega_i\in L^{p_i}(S^{n-1})$  and
\begin{align}\label{w}
	\int_{0}^1 \frac{\omega_{i,p_i}(\delta)}{\delta} \,d\delta<\infty\ \ where \  \ \omega_{i,p_i}(\delta):=\sup_{\|\rho\|<\delta}\|\Omega_i\big(\rho(\cdot)\big)-\Omega_i(\cdot)\|_{L^{p_i}(S^{n-1})}
\end{align}
defines the $L^{p_i}$-modulus of continuity for $\Omega_i$.

\begin{definition}
	A locally integrable function $b$ is said to belong to $BMO(\R^n)$ if and only if
	\begin{align*}
		\|b\|_{BMO(\R^n)}:=\sup\limits_B\frac{1}{|B|}\int_B |b(y)-\fint_B b|\,dy,
	\end{align*}
	where the supremum is taken over all balls $B\subset \R^n$.
\end{definition}

In the remainder of this section, $b$ is a $BMO$ function unless otherwise stated. Now we consider the behavior of the commutator $[b,T_{\Omega,\alpha}^m]$ defined by
$$[b,T_{\Omega,\alpha}^m]f(x)=\int_{\R^n}\big(b(x)-b(y)\big)K(x,y)f(y)dy$$
for suitable function $f$ and the main results in weighted spaces are Theorem \ref{T1} and Theorem \ref{RESULT 1} below.

Let us introduce two lemmas as the  preparations for Theorem \ref{T1}.

\begin{lemma}\label{W}\rm(\cite[Lemma 3.12]{BMMST20})\em
	Fix $1<r<\infty$ and $1<\eta<\infty$. For any $\omega^\eta\in \mathcal{A}_r$ and $b\in BMO(\R^n)$,
	\begin{align*}
		[\omega e^{\lambda b}]_{\mathcal{A}_r}\leqslant[\omega^\eta]^{\frac{1}{\eta}}_{\mathcal{A}_r} 4^{|\lambda|\|b\|_{BMO(\R^n)}} \ \ if \ \ |\lambda|\leqslant \frac{min\{1,r-1\}}{\eta'\|b\|_{BMO(\R^n)}},
	\end{align*}
	where $[\omega ]_{\mathcal{A}_r}$ denotes the constant $\sup\limits_{B} \bigg(\frac{1}{|B|}\int_{B}\omega(x)\,dx\bigg)\bigg(\frac{1}{|B|}\int_{B}\omega^{1-r'}(x)\,dx\bigg)^{r-1}$.
\end{lemma}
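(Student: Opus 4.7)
The plan is to verify this perturbation estimate for $\mathcal{A}_r$ constants through three standard ingredients: unfolding the definition, applying H\"older's inequality with exponents $(\eta,\eta')$, and using a sharp form of the John--Nirenberg inequality to control exponential averages of $b-b_B$.

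First I would expand
\begin{align*}
[\omega e^{\lambda b}]_{\mathcal{A}_r}=\sup_B\Big(\fint_B \omega(x) e^{\lambda b(x)}\,dx\Big)\Big(\fint_B \omega(x)^{1-r'} e^{\lambda(1-r')b(x)}\,dx\Big)^{r-1},
\end{align*}
fix a ball $B$, set $b_B=\fint_B b$, and insert $b=b_B+(b-b_B)$ into both exponentials. The factors $e^{\lambda b_B}$ and $e^{\lambda(1-r')b_B\cdot(r-1)}=e^{-\lambda b_B}$ cancel, leaving integrals involving $e^{\lambda(b-b_B)}$ and $e^{-\lambda(b-b_B)/(r-1)}$ only.

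Next I would split each of the two averages by H\"older with exponents $(\eta,\eta')$:
\begin{align*}
\fint_B \omega\,e^{\lambda(b-b_B)}\leqslant \Big(\fint_B \omega^\eta\Big)^{1/\eta}\Big(\fint_B e^{\lambda\eta'(b-b_B)}\Big)^{1/\eta'},
\end{align*}
and similarly for the second factor with $\omega^{1-r'}$ replacing $\omega$. Raising the second one to the power $r-1$ and multiplying, the weighted part reorganizes into
\begin{align*}
\Big[\Big(\fint_B \omega^\eta\Big)\Big(\fint_B \omega^{\eta(1-r')}\Big)^{r-1}\Big]^{1/\eta}\leqslant [\omega^\eta]_{\mathcal{A}_r}^{1/\eta},
\end{align*}
which gives exactly the first factor in the claimed bound.

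The remaining step, and the only delicate one, is to estimate
\begin{align*}
\Big(\fint_B e^{\lambda\eta'(b-b_B)}\Big)^{1/\eta'}\Big(\fint_B e^{-\lambda\eta'(b-b_B)/(r-1)}\Big)^{(r-1)/\eta'}\leqslant 4^{|\lambda|\|b\|_{BMO(\R^n)}}
\end{align*}
by $4^{|\lambda|\|b\|_{BMO}}$. Here is where the hypothesis $|\lambda|\leqslant \min\{1,r-1\}/(\eta'\|b\|_{BMO})$ is used: it guarantees that the two scalar multipliers of $(b-b_B)$, namely $\lambda\eta'$ and $\lambda\eta'/(r-1)$, both have absolute values at most $1/\|b\|_{BMO}$. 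A quantitative John--Nirenberg estimate of the form $\fint_B e^{\mu(b-b_B)}\,dx\leqslant 2^{|\mu|\|b\|_{BMO}}$ for $|\mu|\|b\|_{BMO}\leqslant 1$ (proved by expanding the exponential and using the standard $L^k$ bounds for $b-b_B$) then gives $2^{|\lambda|\|b\|_{BMO}}$ for each of the two bracketed factors, whose product is $4^{|\lambda|\|b\|_{BMO}}$. Taking the supremum over $B$ combines this with the weighted factor to yield the lemma.

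The main obstacle is producing the exponential constant $4^{|\lambda|\|b\|_{BMO}}$ with the correct explicit dependence on $\lambda$, as opposed to a qualitative bound; this forces one to invoke a quantitative variant of John--Nirenberg rather than the usual statement that the $\mathrm{exp}\,L$-norm of $b-b_B$ is finite.
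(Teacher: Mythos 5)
Your Hölder decomposition and the algebra are correct, and they follow the route you would expect for a perturbation estimate of this type (the paper does not re-prove this lemma; it simply cites \cite[Lemma 3.12]{BMMST20}, so there is no in-paper proof to compare against). Concretely: the cancellation of the $e^{\pm\lambda b_B}$ factors via $(1-r')(r-1)=-1$ is right; the $(\eta,\eta')$-Hölder step is right; the weighted pieces recombine into $\big[(\fint_B\omega^\eta)(\fint_B(\omega^\eta)^{1-r'})^{r-1}\big]^{1/\eta}\le[\omega^\eta]_{\mathcal{A}_r}^{1/\eta}$; and you correctly check that the hypothesis $|\lambda|\le\min\{1,r-1\}/(\eta'\|b\|_{BMO})$ ensures that both exponents $\lambda\eta'$ and $\lambda\eta'/(r-1)$ satisfy $|\mu|\|b\|_{BMO}\le 1$.

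The genuine gap is in your last step. You assert that $\fint_B e^{\mu(b-b_B)}\,dx\le 2^{|\mu|\|b\|_{BMO}}$ for $|\mu|\|b\|_{BMO}\le1$, ``proved by expanding the exponential and using the standard $L^k$ bounds for $b-b_B$.'' That route cannot produce this clean bound: the power-series argument with $\fint_B|b-b_B|^k\lesssim c_1 k!(\|b\|_{BMO}/c_2)^k$ (John--Nirenberg) gives convergence only for $|\mu|\|b\|_{BMO}<c_2$, where $c_2$ is a dimensional constant strictly less than $1$, and the resulting bound is $1+c_1\frac{|\mu|\|b\|/c_2}{1-|\mu|\|b\|/c_2}$, not $2^{|\mu|\|b\|}$. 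The dimension-free estimate you want actually comes from a different, sharper mechanism: if one normalizes $BMO$ so that $\fint_B e^{|b-b_B|/\|b\|_{BMO}}\le 2$ (the Luxemburg/$\exp L$ normalization), then for $\theta:=|\mu|\|b\|_{BMO}\in[0,1]$ Jensen's inequality for $t\mapsto t^\theta$ gives
\begin{equation*}
\fint_B e^{\mu(b-b_B)}\le\fint_B\Big(e^{|b-b_B|/\|b\|_{BMO}}\Big)^\theta\le\Big(\fint_B e^{|b-b_B|/\|b\|_{BMO}}\Big)^\theta\le 2^{|\mu|\|b\|_{BMO}}.
\end{equation*}
This is the intended argument behind the constant $4^{|\lambda|\|b\|_{BMO}}$. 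Note, however, that it presupposes the $\exp L$ normalization of the $BMO$ norm, which differs by a dimensional factor from the $L^1$-average definition written in this paper's Section~3. So either one must work with the $\exp L$ norm as in \cite{BMMST20}, or one must accept a dimensional constant in place of $4$ (and a correspondingly dimension-dependent threshold on $|\lambda|$). You should replace the power-series justification with the Jensen argument above and make the normalization explicit.
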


\begin{lemma}\rm(\cite[Theorem 3.3]{RU14})\em\label{b}
	Let $\alpha\in [0,n)$, $ m>1$, $\{p_i,q_i,s\}^m_{i=1}$ be constants connected to $T_{\Omega,\alpha}^m$ as before, $s<p<\frac{n}{\alpha}$ and $\frac{1}{q}=\frac{1}{p}-\frac{\alpha}{n}$. Suppose that $A_i,\omega$ and $\Omega_i$ satisfy \rm(\ref{A})\em and \rm(\ref{w})\em. Then for any $\omega^s\in \mathcal{A}(\frac{p}{s}.\frac{q}{s})$ and $f\in L^\infty_c(\R^n)$,
	\begin{align*}
		\bigg(\int_{\R^n}|T_{\Omega,\alpha}^m f(x)|^q \omega^q(x)dx\bigg)^{\frac{1}{q}}\lesssim\bigg(\int_{\R^n}|f(x)|^p \omega^p(x)dx\bigg)^{\frac{1}{p}}.
	\end{align*}
\end{lemma}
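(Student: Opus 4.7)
The plan is to reduce the weighted $L^p(\omega^p) \to L^q(\omega^q)$ boundedness of $T_{\Omega,\alpha}^m$ to the classical Muckenhoupt--Wheeden theorem for the Riesz potential $I_\alpha$, by approximating each rough factor $\Omega_i$ by smooth pieces and treating the product kernel via Hölder's inequality together with changes of variables that exploit the hypotheses (\ref{A}) and (\ref{w}).

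First I would decompose each $\Omega_i$ on $S^{n-1}$ using its $L^{p_i}$-modulus of continuity: mollifying at dyadic scales yields $\Omega_i = \sum_{k \geqslant 0} \Omega_i^{(k)}$ with $\|\Omega_i^{(k)}\|_{L^{p_i}(S^{n-1})} \lesssim \omega_{i,p_i}(2^{-k})$, and the Dini-type hypothesis (\ref{w}) makes $\sum_k \omega_{i,p_i}(2^{-k})$ essentially the finite integral $\int_0^1 \omega_{i,p_i}(\delta)\delta^{-1}\,d\delta$. It therefore suffices to prove the weighted bound for the operator with each $\Omega_i$ replaced by a smooth piece, provided the dependence on the multi-index $(k_1,\ldots,k_m)$ is summable.

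Next I would establish a domination of the smoothed operator by a single-singularity fractional operator acting on $|f|^s$. The geometric input is that, by invertibility of $A_i$ and $A_i - A_j$ from (\ref{A}), one can split $\R^n$ into regions on which a single factor $|x - A_i y|^{-n/q_i}$ dominates while the remaining $|x - A_j y|$ are comparable. Applying Hölder's inequality in the angular variables with the conjugate exponents $(p_1,\ldots,p_m,s)$, whose reciprocals sum to $1$ by hypothesis, extracts each $\Omega_i^{(k_i)}$ in its $L^{p_i}$-norm; the remaining kernel has homogeneity $\sum n/q_i = n-\alpha$, and after the change of variables $y \mapsto A_i^{-1} y$ it matches the kernel of a Riesz potential of order $\alpha s$ acting on $|f|^s$. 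The hypothesis $\omega(A_i x) \lesssim \omega(x)$ in (\ref{A}) ensures that the weighted measures pass through these changes of variables.

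Finally I would invoke Muckenhoupt--Wheeden: multiplying the relation $\frac{1}{q} = \frac{1}{p} - \frac{\alpha}{n}$ by $s$ gives $\frac{1}{q/s} = \frac{1}{p/s} - \frac{\alpha s}{n}$, which is exactly the Riesz-potential index condition for $I_{\alpha s}$ applied at exponent $p/s$. Since $\omega^s \in \mathcal{A}(p/s,q/s)$ with $(\omega^s)^{p/s} = \omega^p$ and $(\omega^s)^{q/s} = \omega^q$, the classical bound yields $I_{\alpha s}: L^{p/s}(\omega^p) \to L^{q/s}(\omega^q)$; applying this to $|f|^s$ and taking $s$-th roots completes the proof. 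The main obstacle lies in the second step: carrying out the geometric disentanglement of the $m$ simultaneous singularities while tracking the constants in Hölder's inequality so that the summation over the smoothing multi-indices $(k_1,\ldots,k_m)$ produces a convergent series, which is where hypothesis (\ref{w}) is decisive.
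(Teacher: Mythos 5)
The statement is not proved in the paper at all; it is imported verbatim as \cite[Theorem~3.3]{RU14}. So the relevant comparison is with the Riveros--Urciuolo argument, and there your proposal has a genuine gap in the domination step. The claim that $|T_{\Omega,\alpha}^m f(x)|$ can be controlled pointwise (after shifting by $A_i^{-1}$) by a Riesz potential $(I_{\alpha s}|f|^s)^{1/s}$ is false, and one can already see this in the simplest case $m=1$, $A_1=I$, $\Omega\equiv 1$, where $T_{\Omega,\alpha}^1=I_\alpha$. Taking $s=2$ and $f=\sum_{k=1}^N 2^{-k\alpha}k^{-1}\chi_{\{2^k<|y|<2^{k+1}\}}$, a direct computation gives $I_\alpha f(0)\sim\log N$ while $(I_{2\alpha}|f|^2)^{1/2}(0)\sim 1$, and likewise $M_{\alpha,2}f(0)\sim 1$. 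Thus neither the Riesz potential of $|f|^s$ nor the fractional maximal function $M_{\alpha,s}f$ dominates $T_{\Omega,\alpha}^m f$ pointwise. Your Hölder/annulus decomposition produces the bound $|Tf(x)|\lesssim\sum_j(2^jr)^\alpha\big(\fint_{B(x,2^jr)}|f|^s\big)^{1/s}$, and for $s>1$ an $\ell^1$-sum dominates the corresponding $\ell^s$-sum, so the inequality goes the wrong way: the quantity you obtain is \emph{larger} than $(I_{\alpha s}|f|^s)^{1/s}(x)$, not smaller.

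What Riveros--Urciuolo actually prove (and what this paper later quotes as Lemma~\ref{em M2} from \cite[Theorem~2.2]{RU14}) is a \emph{sharp maximal function} estimate: for small $\delta>0$,
\begin{align*}
\big(M^\sharp|T_{\Omega,\alpha}^m f|^\delta(x)\big)^{1/\delta}\lesssim\sum_{j=1}^m M_{\alpha,s}f(A_j^{-1}x),
\end{align*}
and this is where the Dini condition (\ref{w}) and the invertibility of the $A_i$ and $A_i-A_j$ genuinely enter, through a H\"ormander-type regularity estimate on the product kernel. The weighted conclusion then follows from the (weighted) Fefferman--Stein inequality together with the Muckenhoupt--Wheeden bound for $M_{\alpha,s}f=(M_{\alpha s}|f|^s)^{1/s}$. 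Your third step---rescaling the index relation to reduce to $I_{\alpha s}$ or $M_{\alpha s}$ at exponent $p/s$---is exactly the right normalization, and the role you assign to $\omega(A_ix)\lesssim\omega(x)$ is also correct; but the missing intermediate object is $M^\sharp(|Tf|^\delta)^{1/\delta}$, without which the pointwise reduction simply does not hold. (The preliminary dyadic mollification of each $\Omega_i$ is also not how (\ref{w}) is used in \cite{RU14}; the modulus of continuity is consumed directly inside the kernel regularity estimate, but that is a stylistic difference rather than an error.)
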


\begin{theorem}\label{T1}
	Let $\alpha\in [0,n)$, $ m>1$, $\{p_i,q_i,s\}^m_{i=1}$ be constants connected to $T_{\Omega,\alpha}^m$ as before, $s<p<\frac{n}{\alpha}$ and $\frac{1}{q}=\frac{1}{p}-\frac{\alpha}{n}$. Suppose that $A_i,\omega$ and $\Omega_i$ satisfy \rm(\ref{A})\em and \rm(\ref{w})\em.
	Then for any $\omega^s\in \mathcal{A}(\frac{p}{s}.\frac{q}{s})$ and $b\in BMO(\R^n)$, $[b,T_{\Omega,\alpha}^m]$ can be extended to a bounded operator from $L^p_{\omega^p}(\R^n)$ to $L^q_{\omega^q}(\R^n)$.
\end{theorem}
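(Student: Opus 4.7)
The plan is to execute the Coifman--Rochberg--Weiss conjugation (analytic) method, reducing the commutator estimate to the non-commuted weighted bound of Lemma \ref{b} applied to a one-parameter family of twisted weights $\omega e^{\mu b}$. For $f\in L^\infty_c(\R^n)$ and $z\in\C$, set
$$G_z(x):=e^{zb(x)}\,T_{\Omega,\alpha}^m\bigl(e^{-zb}f\bigr)(x);$$
the John--Nirenberg inequality ensures $e^{\pm zb}f$ lies locally in every $L^r$ for $|z|$ in a small disk around $0$, so $G_z$ is well defined and holomorphic in $z$ there. Differentiating at $z=0$ reproduces $[b,T_{\Omega,\alpha}^m]f$, so Cauchy's formula on $|z|=\epsilon$ combined with Minkowski's integral inequality in $L^q_{\omega^q}$ gives
$$\|[b,T_{\Omega,\alpha}^m]f\|_{L^q_{\omega^q}}\leqslant \frac{1}{2\pi\epsilon}\int_0^{2\pi}\bigl\|G_{\epsilon e^{i\theta}}\bigr\|_{L^q_{\omega^q}}\,d\theta.$$

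For each $\theta$, writing $\mu=\epsilon\cos\theta$ and using $|e^{\pm zb(x)}|=e^{\pm\mu b(x)}$, the outer exponential is absorbed into a weight twist:
$$\bigl\|G_{\epsilon e^{i\theta}}\bigr\|_{L^q_{\omega^q}}=\bigl\|T_{\Omega,\alpha}^m(e^{-zb}f)\bigr\|_{L^q_{(\omega e^{\mu b})^q}}.$$
Provided the twisted weight $\omega_\mu:=\omega e^{\mu b}$ satisfies $\omega_\mu^s\in\mathcal{A}(p/s,q/s)$ uniformly in $\theta$, Lemma \ref{b} applied with $\omega_\mu$ in place of $\omega$ yields
$$\bigl\|T_{\Omega,\alpha}^m(e^{-zb}f)\bigr\|_{L^q_{\omega_\mu^q}}\lesssim \bigl\|e^{-zb}f\bigr\|_{L^p_{\omega_\mu^p}}=\|f\|_{L^p_{\omega^p}},$$
where the last equality is simply the cancellation of $e^{\pm\mu b}$.

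The main obstacle, where Lemma \ref{W} enters, is precisely this uniform weight check. I would use the standard $\mathcal{A}(P,Q)$--$\mathcal{A}_r$ identification ($v\in\mathcal{A}(P,Q)$ iff $v^Q\in\mathcal{A}_{1+Q/P'}$) to reduce $\omega_\mu^s\in\mathcal{A}(p/s,q/s)$ to $\omega^q e^{q\mu b}\in\mathcal{A}_r$ with $r:=1+(q/s)/(p/s)'$. The openness of $\mathcal{A}_r$ under power perturbations (reverse H\"older) furnishes some $\eta>1$ with $\omega^{q\eta}\in\mathcal{A}_r$, and then Lemma \ref{W} delivers $\omega^q e^{q\mu b}\in\mathcal{A}_r$ with a uniformly controlled constant whenever $|q\mu|$ stays below a threshold proportional to $1/\|b\|_{BMO(\R^n)}$. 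Choosing $\epsilon$ of that size makes the inner bound uniform in $\theta$ and yields
$$\|[b,T_{\Omega,\alpha}^m]f\|_{L^q_{\omega^q}}\lesssim \frac{1}{\epsilon}\|f\|_{L^p_{\omega^p}}\lesssim \|b\|_{BMO(\R^n)}\|f\|_{L^p_{\omega^p}}.$$
A standard density argument ($L^\infty_c$ is dense in $L^p_{\omega^p}$ for Muckenhoupt weights) then extends $[b,T_{\Omega,\alpha}^m]$ to all of $L^p_{\omega^p}$. Once the weight-tracking step is in place, the Cauchy-integral machinery runs essentially automatically, so the whole difficulty concentrates in that one bookkeeping step.
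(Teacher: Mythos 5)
Your approach is essentially the paper's: both run the Coifman--Rochberg--Weiss analytic conjugation $G(z)=e^{zb}T_{\Omega,\alpha}^m(e^{-zb}f)$, differentiate at $z=0$, use Cauchy's formula and Minkowski's inequality, and reduce to the unweighted case of Lemma~\ref{b} applied to the twisted weight $\omega e^{\mu b}$, with Lemma~\ref{W} controlling the $\mathcal{A}(p/s,q/s)$ constant uniformly for $|z|\lesssim 1/\|b\|_{BMO(\R^n)}$. Your explanation of the weight-tracking step (passing through the $\mathcal{A}(P,Q)$--$\mathcal{A}_r$ identification and invoking openness of $\mathcal{A}_r$) is in fact slightly more explicit than the paper's, which simply cites \cite[Lemma~3.12]{N10} and Lemma~\ref{W}.

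There is, however, one gap you elide. Lemma~\ref{b} is stated for $f\in L^\infty_c(\R^n)$, and for a general $b\in BMO(\R^n)$ the function $e^{-zb}f$ is \emph{not} in $L^\infty_c$ even when $f$ is: it is compactly supported but may be unbounded. John--Nirenberg only gives $e^{-zb}f\in L^r_c$ for all finite $r$ when $|z|$ is small, which is not literally the hypothesis of Lemma~\ref{b}. The paper closes this by first running the Cauchy-integral argument under the additional assumption $b\in L^\infty(\R^n)$ (so $e^{-zb}f\in L^\infty_c$), and then passing to general $b\in BMO(\R^n)$ via the truncations $b^N=\min(\max(b,-N),N)$, the claim $\|b^N\|_{BMO(\R^n)}\leqslant 2\|b\|_{BMO(\R^n)}$, and Fatou's lemma. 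Your proposal needs either that truncation step or, alternatively, a density argument extending Lemma~\ref{b} to $L^r_c$ inputs, before the application of Lemma~\ref{b} is legitimate. The final density extension from $L^\infty_c$ to $L^p_{\omega^p}$ you mention does not fix this, since the issue is in the intermediate argument, not in the domain of $[b,T^m_{\Omega,\alpha}]$.
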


\begin{proof}
	For any $f\in L^\infty_c(\R^n)$, $b\in L^\infty(\R^n)$, $x\in \R^n$ and $z\in \C^n$, set
	\begin{align*}
		G(z)=e^{zb(x)}T_{\Omega,\alpha}^m (fe^{-zb(\cdot)})(x).
	\end{align*}
	$G(z)$ turns out to be analytic at $0$ and by calculation,
	\begin{align}\label{trans}
		\frac{\partial G}{\partial z}\bigg|_{z=0}=[b,T_{\Omega,\alpha}^m]f(x).
	\end{align}
	On the other hand, through the method used in \cite{ABKP93,BMMST20, N10} , the Cauchy integral formula for $G(z)$ shows that
	\begin{align}\label{Cauchy}
		\frac{\partial G}{\partial z}\bigg|_{z=0}=\frac{1}{2\pi i}\int_{|z|=\epsilon} \frac{G(z)}{(z-0)^2} dz
	\end{align}
	for suitable $\varepsilon>0$.
	
	Combining (\ref{trans}), (\ref{Cauchy}) and the Minkowski inequality, it follows that
	\begin{align*}
		\|[b,T_{\Omega,\alpha}^m]f\|_{L^q_{\omega^q}}
		&=\|\frac{\partial G}{\partial z}\bigg|_{z=0}\|_{L^q_{\omega^q}}(x)\\
		&\leqslant \frac{1}{2\pi} \int_{|z|=\epsilon} \frac{\|e^{zb(x)}T_{\Omega,\alpha}^m (fe^{-zb(\cdot)})(x)\|_{L^q_{\omega^q}(x)}}{|z|^2} dz\\
		&\leqslant \frac{1}{\varepsilon} \sup_{|z|=\varepsilon }\|e^{zb(x)}T_{\Omega,\alpha}^m (fe^{-zb(\cdot)})(x)\|_{L^q_{\omega^q}(x)}.
	\end{align*}
	Then according to \cite[Lemma 3.12]{N10} and Lemma \ref{W}, $(\omega e^{b\cdot Re(z)})^s \in \mathcal{A}(\frac{p}{s},\frac{q}{s})$ holds for any $|z|=\varepsilon=\frac{min\{1,\frac{\frac{q}{s}}{\frac{p}{s}'}\}}{qs'\|b\|_{BMO(\R^n)}}$ and therefore by Lemma \ref{b} we have
	\begin{align*}
		\|e^{zb(x)}T_{\Omega,\alpha}^m (fe^{-zb(\cdot)})(x)\|_{L^q_{\omega^q}(x)}&=\|T_{\Omega,\alpha}^m (fe^{-zb(\cdot)})(x)\|_{L^q_{{(\omega e^{b\cdot Re(z)})}^q}(x)}\\
		&\lesssim \|fe^{-zb(\cdot)}\|_{L^p_{{(\omega e^{b\cdot Re(z)})}^p}}=\|f\|_{L^p_{\omega^p}(\R^n)}.
	\end{align*}
	Now
	\begin{align*}
		\|[b,T_{\Omega,\alpha}^m]f\|_{L^q_{\omega^q}(\R^n)}\lesssim \|b\|_{BMO(\R^n)}\|f\|_{L^p_{\omega^p}(\R^n)}
	\end{align*}
	for any $f\in L^\infty_c(\R^n)$ under the hypothesis that $b\in L^\infty(\R^n)$.
	
	For a general $BMO$ function $b$ and any $N\in \N^+$, set $$b^N(x)=\begin {cases}
	b(x)   & \text{if \, $-N\leqslant\ b(x) \leqslant N$,} \\
	N & \text {if \, $b(x)>N$,} \\
	-N & \text {if \, $b(x)<N$.}
	\end {cases}$$
	{Claim }:
	\begin{align*}
		\|b^N\|_{BMO(\R^n)}\leqslant 2\|b\|_{BMO(\R^n)} \ for \ any\  N\in \N^+.
	\end{align*}
	Indeed, given any ball $B\subset \R^n$, assume that $\fint_{B}b^N\leqslant\fint_{B}b$.
	Thus
    \begin{align*}
    0\leqslant \fint_{B}b-\fint_{B}b^N
    &=\frac{1}{|B|}\int_{B\cap \{y:\,b(y)\geqslant\fint_{B}b^N\}}\bigg(b(y)-b^N(y)\bigg)\, dy\\
    &\ \ \ \ +\frac{1}{|B|}\int_{B\cap \{y:\,b(y)\leqslant\fint_{B}b^N\}}\bigg(b(y)-b^N(y)\bigg)\, dy,
   \end{align*}
 where the first item in the righthand side  is nonnegative and the second one is nonpositive.
	Then we obtain
	$$\int_{B\cap \{y:\,b(y)\geqslant\fint_{B}b^N\}} \bigg(\fint_{B}b-\fint_{B}b^N\bigg)\,dy\leqslant \int_{B\cap \{y:\,b(y)\geqslant\fint_{B}b^N\}} \bigg(b(y)-b^N(y)\bigg)\,dy ,$$
	which immediately shows that
	\begin{align}\label{item 1}
		\int_{B\cap \{y:\,b^N(y)>\fint_{B}b^N\}} \bigg(b^N(y)-\fint_{B}b^N\bigg)\,dy\leqslant \int_{B\cap \{y:\,b(y)\geqslant\fint_{B}b^N\}} \bigg(b(y)-\fint_{B}b\bigg)\,dy.
	\end{align}
	Besides,
	\begin{align}\label{item 2}
		\int_{B\cap \{y:\,b^N(y)<\fint_{B}b^N\}} \bigg( \fint_{B}b^N-b^N(y)\bigg)\,dy\leqslant \int_{B\cap \{y:\,b(y)<\fint_{B}b^N\}} \bigg(\fint_{B}b-b(y)\bigg)\,dy.
	\end{align}
	Combining (\ref{item 1}) and (\ref{item 2}), it follows that
	$$\frac{1}{|B|}\int_{B}\big|b^N(y)-\fint_{B}b^N\big|\, dy\leqslant 2\|b\|_{BMO(\R^n)}.$$
	Similar proof works for the condition $\fint_{B}b^N\geqslant\fint_{B}b$ and the proof of the Claim is complete.
	
	Note that $b^N\to b\ a.e.$ and $b^Nf\to bf\ in\ L^p(\R^n)$, then by Fatou’s lemma and the Claim we deduce that
	\begin{align*}
		\|[b,T_{\Omega,\alpha}^m]f\|_{L^q_{\omega^q}(\R^n)}\leqslant \lim_{N\to \infty} \|[b,T_{\Omega,\alpha}^m]f\|_{L^q_{\omega^q}(\R^n)} \lesssim \|b\|_{BMO(\R^n)}\|f\|_{L^q_{\omega^q}(\R^n)},
	\end{align*}
	which is the desired conclusion.
\end{proof}

\begin{corollary}
	Let $1<s<p<\frac{n}{\alpha}$,  $\frac{1}{q}=\frac{1}{p}-\frac{\alpha}{n}$ and $T$ be a linear operator on $L^r_{\omega}(\R^n)$ for any $\omega \in \mathcal{A}_\infty$ and $r\geqslant1$. If
	\begin{align*}
		\bigg(\int_{\R^n}|T f(x)|^q \omega^q(x)dx\bigg)^{\frac{1}{q}}\lesssim\bigg(\int_{\R^n}|f(x)|^p \omega^p(x)dx\bigg)^{\frac{1}{p}}
	\end{align*}	
	for any $\omega^s\in \mathcal{A}(\frac{p}{s}.\frac{q}{s})$ and $f\in L^\infty_c(\R^n)$,  then for any  $\omega^s\in \mathcal{A}(\frac{p}{s}.\frac{q}{s})$ and $b\in BMO(\R^n)$, $[b,T]$ can be extended to a bounded operator from $L^p_{\omega^p}(\R^n)$ to $L^q_{\omega^q}(\R^n)$.
\end{corollary}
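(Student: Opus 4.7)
The plan is to mimic the proof of Theorem \ref{T1} verbatim, since the only property of $T_{\Omega,\alpha}^m$ used there was the weighted $L^p_{\omega^p}\to L^q_{\omega^q}$ estimate assumed under the hypothesis $\omega^s\in \mathcal{A}(\tfrac{p}{s},\tfrac{q}{s})$. In other words, the complex-analytic transference argument of Alvarez--Bagby--Kurtz--P\'erez type is entirely abstract in $T$, and we simply plug in the generic $T$ of the statement in place of $T_{\Omega,\alpha}^m$.

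First I would reduce to the case $b\in L^\infty(\R^n)$. For such $b$, fix $f\in L^\infty_c(\R^n)$ and $x\in\R^n$ and define the entire function $G(z)=e^{zb(x)}T(fe^{-zb(\cdot)})(x)$ of $z\in\C$. A routine differentiation under the integral gives $\partial_zG(0)=[b,T]f(x)$, and the Cauchy integral formula yields
\begin{align*}
[b,T]f(x)=\frac{1}{2\pi i}\int_{|z|=\varepsilon}\frac{G(z)}{z^{2}}\,dz.
\end{align*}
Taking $L^q_{\omega^q}$ norms in $x$ and applying Minkowski's integral inequality then reduces the problem to bounding $\sup_{|z|=\varepsilon}\|e^{zb(\cdot)}T(fe^{-zb(\cdot)})\|_{L^q_{\omega^q}}$, which equals $\|T(fe^{-zb(\cdot)})\|_{L^q_{(\omega e^{b\,\mathrm{Re}\,z})^q}}$.

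Next I would choose $\varepsilon\sim\min\{1,(q/s)/(p/s)'\}/(qs'\|b\|_{BMO})$ so that Lemma \ref{W} guarantees $(\omega e^{b\,\mathrm{Re}\,z})^s\in \mathcal{A}(\tfrac{p}{s},\tfrac{q}{s})$ uniformly for $|z|=\varepsilon$. The hypothesis on $T$ applied to this new weight then gives $\|T(fe^{-zb(\cdot)})\|_{L^q_{(\omega e^{b\,\mathrm{Re}\,z})^q}}\lesssim \|fe^{-zb(\cdot)}\|_{L^p_{(\omega e^{b\,\mathrm{Re}\,z})^p}}=\|f\|_{L^p_{\omega^p}}$. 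Combining everything yields $\|[b,T]f\|_{L^q_{\omega^q}}\lesssim \|b\|_{BMO}\|f\|_{L^p_{\omega^p}}$ for $b\in L^\infty$ and $f\in L^\infty_c$.

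Finally, I would extend from $b\in L^\infty$ to a general $b\in BMO(\R^n)$ by truncating: let $b^N=\max\{-N,\min\{N,b\}\}$. The argument already given in the proof of Theorem \ref{T1} shows $\|b^N\|_{BMO}\leqslant 2\|b\|_{BMO}$, so the previous step gives a uniform-in-$N$ bound for $[b^N,T]f$. Since $b^N\to b$ a.e.\ and the commutator kernel converges accordingly, Fatou's lemma transfers the bound to $[b,T]f$ for $f\in L^\infty_c(\R^n)$, and a density argument extends $[b,T]$ to a bounded operator $L^p_{\omega^p}\to L^q_{\omega^q}$. I do not anticipate a serious obstacle: the only point to be careful about is verifying that the small $\varepsilon$ chosen in Lemma \ref{W} indeed makes $(\omega e^{b\,\mathrm{Re}\,z})^s$ lie in the prescribed Muckenhoupt class, which is exactly the content of that lemma combined with $\omega^s\in \mathcal{A}(\tfrac{p}{s},\tfrac{q}{s})\subset \mathcal{A}_\infty$.
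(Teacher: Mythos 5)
Your proposal is correct and is exactly what the paper intends: the corollary appears without its own proof because the complex-analytic transference argument in the proof of Theorem~\ref{T1} uses only the assumed weighted $L^p_{\omega^p}\to L^q_{\omega^q}$ bound and Lemma~\ref{W}, never any structural feature of $T^m_{\Omega,\alpha}$ beyond linearity and being defined on the relevant weighted spaces. The analyticity of $z\mapsto G(z)$ and the identity $\partial_z G(0)=[b,T]f(x)$ for an abstract linear $T$ is the only point you could flag more explicitly (for a non-integral $T$ one argues via Banach-space-valued holomorphy of $z\mapsto fe^{-zb}$ rather than ``differentiation under the integral''), but this is precisely how the cited references \cite{ABKP93,BMMST20,N10} handle it, so your outline matches the paper's route.
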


As mentioned in the introduction, the $H^p(\R^n)-L^{\frac{np}{n-\alpha p}}(\R^n)$ boundedness of $T_{\Omega,\alpha}^m$ and part of the behaviors of $T_{\Omega,\alpha}^m$ as well as the commutator of $T_{\Omega,\alpha}^m$ with $BMO$ function in weighted spaces have been investigated in \cite{IR18,IV23,RU12, RU17}. We are eager to move forward with this topic in the weighted Hardy space. It is necessary to prepare more before begining our debate. We start by introducing a new set of $BMO$ functions and giving some explanations for them.

\begin{definition}\label{BMOA}
	Let $A$ be an arbitrary invertible matrix, a $BMO$ function $b$ is said to be in $BMO^A(\R^n)$ if and only if $$\sup\limits_{B} \bigg|\fint_{B} b-\fint_{A(B)} b\bigg|<\infty,$$
	where the supremum is taken over all balls $B\subset \R^n$ and $A(B)$ denotes the set $\{Ax: x\in B\}$. If $A=I$, then $BMO^A(\R^n)=BMO(\R^n)$.
\end{definition}

\begin{example}
	$L^\infty(\R^n)\subset BMO^A(\R^n)$ for any invertible matrix $A$. Moreover, for any  $0<\alpha\leqslant1$, $(\log |x|)^\alpha$ and  $\big| \log|x|\big|^\alpha$ are in $BMO^A(\R^n)$ for any invertible matrix $A$.
\end{example}

Here we take $\log |x|$ for instance. Let $A$ be an invertible matrix and $b\in BMO(\R^n)$, it is obvious for us to have the following equivalent characterization for functions in $BMO^A(\R^n)$:
$$\sup\limits_{B(x,R)} \bigg|\fint_{B(x,R)} b-\fint_{B(Ax,R)} b\bigg|<\infty.$$
Since $\log |x|\in BMO(\R^n)$, then it suffices to find a $C>0$ such that
$$\bigg|\fint_{B(x,R)} \log |x|-\fint_{B(Ax,R)} \log |x|\bigg|<C$$ for any ball $B=B(x,R)\subset \R^n$.
For the case $|x|<NR$ where $N=N(A)$ is a positive integer bigger than $max\{10,10\|A^{-1}\|\}$, we have
\begin{align*}
	\bigg|\fint_{B(x,R)} \log |x|-\fint_{B(Ax,R)} \log |x|\bigg|
	&\leqslant \log(1+\frac{|Ax-x|}{R})\cdot \|\log |x|\|_{BMO(\R^n)}\\
	&\lesssim \log \big((\|A\|+1)N \big) \big\|\log |x|\big\|_{BMO(\R^n)}.
\end{align*}
For the other case, check that $$\log\big(|x|-R\big)\leqslant\fint_{B(x,R)} \log |x|\leqslant \log\big(|x|+R\big),$$
$$\log\big(|Ax|-R\big)\leqslant\fint_{B(Ax,R)} \log |x|\leqslant \log\big(|Ax|+R\big)$$
and thus
\begin{align*}
	\bigg|\fint_{B(x,R)} \log |x|-\fint_{B(Ax,R)} \log |x|\bigg|
	&\leqslant max\bigg\{\log\frac{\|A\||x|+R}{|x|-R},\log\frac{|x|+R}{\|A\||x|-R}\bigg\}\\
	&\lesssim \log \frac{11}{9}(\|A\|+\|A\|^{-1})
\end{align*}
which gives $\log |x|\in BMO^A(\R^n)$.

Furthermore, we can give a necessary condition for functions in the set $BMO^A(\R^n).$ Hereafter, $z\in \R^n$ is said to be a critical point of a $BMO$ function $b$ if and only if  we can find a ball $B\subset B(z,\varepsilon)$ such that $\fint_B b>N$ for any $\varepsilon >0$ and $N>0$.
\begin{proposition}
	If $b\in BMO^A(\R^n)$, then it must lie in one of the following situations:
	
	\begin{enumerate}[(i)]
		\item b has no critical points; (Example: $L^\infty(\R^n)$)
		\item b has the only critical point $0$; (Example: $\log|x|$)
		\item if $0\neq z\in \R^n$ is a critical point of $b$, then so as $\{A^j z\}_{j=1}^\infty$. \\
		\rm(Example: $\log|x-z|+\log|x-Az|+\cdots+\log|x-A^{K(A)-1}z|$,\\ where $A$ is a nilpotent matrix and $K(A)$ is the smallest integer $K$ such that $A^{K}=I$. )
	\end{enumerate}
\end{proposition}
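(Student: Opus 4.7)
\emph{Proof plan.} The three cases are mutually exhaustive once one establishes the content of (iii): if $z\neq 0$ is a critical point of $b\in BMO^A(\R^n)$, then every iterate $A^j z$ with $j\geqslant 1$ is also a critical point. Indeed, case (i) covers the empty critical set; if the critical set is nonempty and contained in $\{0\}$ we are in case (ii); otherwise there is a nonzero critical point $z$, and iterating (iii) yields the orbit $\{A^j z\}_{j\geqslant 1}$. The plan therefore reduces to proving: $z\neq 0$ critical $\Longrightarrow Az$ critical.

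To certify $Az$ as a critical point, I will produce, for arbitrary $\varepsilon',N'>0$, a ball $B'\subset B(Az,\varepsilon')$ with $\fint_{B'} b>N'$, by transferring mass from a ball near $z$ through the map $A$. Set $\varepsilon:=\varepsilon'/\|A\|$ and let $N$ be a threshold to be fixed at the end. The criticality of $z$ supplies a ball $B=B(w,r)\subset B(z,\varepsilon)$ with $\fint_B b>N$, and the defining inequality of $BMO^A$ produces a constant $C_1$ (depending only on $b$) such that $|\fint_B b-\fint_{A(B)} b|\leqslant C_1$, so that $\fint_{A(B)} b>N-C_1$. Now $A(B)$ is an ellipsoid centered at $Aw$, contained in the ball $B':=B(Aw,r\|A\|)$, and enjoying the comparability $|A(B)|/|B'|=|\det A|/\|A\|^n=:c_A>0$ independent of $w$ and $r$.

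The key step is the passage from the ellipsoid average $\fint_{A(B)} b$ to the ball average $\fint_{B'} b$. Since $A(B)\subset B'$ and $|A(B)|\geqslant c_A|B'|$, the standard BMO oscillation estimate gives
\begin{align*}
\bigg|\fint_{A(B)} b-\fint_{B'} b\bigg|
&\leqslant \frac{1}{|A(B)|}\int_{B'}\bigg|b(y)-\fint_{B'} b\bigg|\,dy \\
&\leqslant c_A^{-1}\|b\|_{BMO(\R^n)}=:C_2.
\end{align*}
Hence $\fint_{B'} b>N-C_1-C_2$. Choosing $N:=N'+C_1+C_2$ yields $\fint_{B'} b>N'$, while $B'\subset B(Az,\varepsilon')$ follows from $|Aw-Az|+r\|A\|\leqslant\|A\|(|w-z|+r)\leqslant\|A\|\varepsilon=\varepsilon'$. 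A straightforward induction then propagates criticality to all $A^j z$, $j\geqslant 1$.

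The main obstacle is the ellipsoid-to-ball transfer in the display above: the $BMO^A$ condition controls an average over the ellipsoid $A(B)$, not over a ball near $Az$, so without the volume comparison $|A(B)|\sim|B'|$ together with the BMO oscillation bound on the enclosing ball $B'$, no useful lower bound for $\fint_{B'} b$ could be extracted. The remaining steps are parameter bookkeeping in $\varepsilon$ and $N$ and the obvious invariance of the conclusion under iteration of $A$.
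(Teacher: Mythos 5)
Your argument is correct and is essentially the same as the paper's: both rely on (a) the $BMO^A$ condition to compare $\fint_B b$ with $\fint_{A(B)} b$, and (b) the standard BMO oscillation bound to compare the ellipsoid average $\fint_{A(B)} b$ with the average over its circumscribing ball of radius $\|A\|r$, using the uniform volume ratio $\|A\|^n/|\det A|\leqslant(\|A\|\|A^{-1}\|)^n$. The only difference is presentational — the paper argues by contraposition (assume $Az$ is not critical, pick the bound $N_0$, derive a contradiction), whereas you prove the implication directly by propagating a large average from a ball near $z$ to a ball near $Az$.
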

\begin{proof}
	If $b\in BMO^A(\R^n)$ lies out of the above situations, then there must exist a critical point $z$ of $b$ while $Az$ is not a critical point of $b$. Consequently we can find $\varepsilon_0>0$ and $N_0>0$ such that $\fint_B b\leqslant N_0$ for any ball $B\subset B(Az,\varepsilon_0)$.
	
	While for any $$N>N_0+(\|A\|\|A^{-1}\|)^n\cdot \|b\|_{BMO(\R^n)}\ \ and\ \ 0<\varepsilon<\frac{\varepsilon_0}{2\|A\|},$$ there exists a ball $B(x,R)\subset B(z,\varepsilon)$ such that $\fint_{B(x,R)} b>N$
	and $\fint_{B(Ax,\|A\|R)}b\leqslant N_0$ since $B(Ax,\|A\|R)\subset B(Az,\varepsilon_0)$.
	It follows that
	\begin{align*}
		\fint_{A(B(x,R))} b
		&\leqslant \fint_{B(Ax, \|A\|R)} b+\bigg|\fint_{A(B(x,R))} b-\fint_{B(Ax, \|A\|R)} b\bigg|\\
		&\leqslant N_0+\frac{|B(Ax, \|A\|R)|}{|A(B(x,R))|}\cdot\|b\|_{BMO(\R^n)}\\
		&\leqslant N_0+\frac{|B(Ax, \|A\|R)|}{|B(Ax, \|A^{-1}\|^{-1}R)|}\cdot\|b\|_{BMO(\R^n)}\\
		&\leqslant N_0+(\|A\|\|A^{-1}\|)^n \cdot\|b\|_{BMO(\R^n)}
	\end{align*}
	and then $$\fint_{B(x,R)} b-\fint_{A(B(x,R))} b>N-N_0-(\|A\|\|A^{-1}\|)^n \cdot\|b\|_{BMO(\R^n)},$$
	which clearly conflicts with $b\in BMO^A(\R^n)$.
\end{proof}

To obtain the boundedness of certain operator in weighted Hardy spaces, we usually need taking advantage of the definitions of atoms which are defined as follows.

\begin{definition}
	Given any $0<p\leqslant 1\leqslant q\leqslant \infty$ and $\omega \in \mathcal{A}_q$, a $(H^p_{\omega}-q)$ atom is a measurable function a: $\R^n \to \R^1$ supported in a ball B such that
	$$\|a\|_{L^q_\omega(\R^n)}\leqslant [\omega(B)]^{\frac{1}{q}-\frac{1}{p}}\ \ and\ \ \int_{\R^n}  a(x)\,dx=0.$$
\end{definition}

\begin{definition}
	Given any $0<p\leqslant 1\leqslant q\leqslant \infty$ and $\omega \in \mathcal{A}_q$, a $\omega-(p,q,d)$ atom where $d\geqslant \left\lfloor n(\frac{q_\omega}{p}-1)\right\rfloor$ is a measurable function a: $\R^n \to \R^1$ supported in a ball B such that
	$$\|a\|_{L^q(\R^n)}\leqslant \frac{|B|^\frac{1}{q}}{[\omega(B)]^\frac{1}{p}}\ \ and\ \ \int_{\R^n} x^\alpha a(x)\,dx=0\  for\  any \ multi-indices \ |\alpha
	|\leqslant d.$$
\end{definition}

\begin{definition}(\cite[Definition 1]{HK21})
	Let $0<p\leqslant1$ and $\omega \in \mathcal{A}_\infty$ satisfy $\int_{\R^n}\frac{\omega(x)}{(1+|x|)^{np}}\, dx<\infty$. A locally integrable function $b$ is said to belong to $BMO_{\omega,p}(\R^n)$ if
	\begin{align*}
		\|b\|_{BMO_{\omega,p}(\R^n)}:=\sup\limits_B\bigg\{\bigg[\frac{1}{\omega(B)}\int_{B^c}\frac{\omega(x)}{|x-x_B|^{np}}\,dx\bigg]^{\frac{1}{p}}\int_B\bigg|b(y)-\fint_B b\bigg|\,dy\bigg\}<\infty,
	\end{align*}
	where the supremum is taken over all balls $B=B(x_B,r_B)\subset \R^n$.
\end{definition}

In what follows, we introduce the lemmas which guarantee our proof of Theorem \ref{RESULT 1}.

\begin{lemma}\rm(\cite{LKY16})\em
	Let $p\geqslant 1$, $\omega \in \mathcal{A}_\infty$ and $b\in BMO(\R^n)$, then for any ball $B\subset \R^n$,
	$$\bigg(\frac{1}{\omega(B)}\int_B \bigg|b(y)-\fint_B b\bigg|^p\omega(y)\,dy\bigg)^{\frac{1}{p}}\lesssim \|b\|_{BMO(\R^n)}$$
\end{lemma}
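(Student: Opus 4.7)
The plan is to combine the classical John--Nirenberg inequality with the $\mathcal{A}_\infty$ self-improvement property, since the statement is essentially the $\omega$-weighted analog of John--Nirenberg. By replacing $b$ with $b/\|b\|_{BMO(\R^n)}$ we may assume $\|b\|_{BMO(\R^n)}=1$, so it suffices to produce a constant depending only on $p$, $n$ and $\omega$.

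First, I would write the left-hand side through the layer-cake formula:
\begin{align*}
\int_B \bigg|b(y)-\fint_B b\bigg|^p\omega(y)\,dy = p\int_0^\infty \lambda^{p-1}\omega\big(E_\lambda\big)\,d\lambda,
\end{align*}
where $E_\lambda:=\{y\in B:|b(y)-\fint_B b|>\lambda\}$. The John--Nirenberg inequality then gives constants $c_1,c_2>0$ (depending only on $n$) with $|E_\lambda|\leqslant c_1|B|\,\e^{-c_2\lambda}$. Since $\omega\in\mathcal{A}_\infty$, there exist $r>1$ with $\omega\in RH_r$ and hence, by Lemma 2.2 of the excerpt applied with a suitable $\mathcal{A}_q$ class containing $\omega$, there exist $C,\eta>0$ so that
\begin{align*}
\frac{\omega(E_\lambda)}{\omega(B)}\leqslant C\bigg(\frac{|E_\lambda|}{|B|}\bigg)^{\eta}\leqslant C\,c_1^{\eta}\,\e^{-c_2\eta\lambda}.
\end{align*}

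Plugging this back into the layer-cake expression and factoring $\omega(B)$ out,
\begin{align*}
\frac{1}{\omega(B)}\int_B \bigg|b(y)-\fint_B b\bigg|^p\omega(y)\,dy\leqslant C\,c_1^{\eta}\,p\int_0^\infty \lambda^{p-1}\e^{-c_2\eta\lambda}\,d\lambda = C'(n,p,\omega),
\end{align*}
which after taking $p$-th roots and undoing the normalization yields the desired bound with implicit constant independent of $B$.

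The argument is routine once the John--Nirenberg estimate and the $\mathcal{A}_\infty$ subset bound are in hand; the only delicate point is ensuring that the exponent $\eta$ is strictly positive (equivalently, that the chosen $\mathcal{A}_q\cap RH_r$ class for $\omega$ is nonempty), but this is precisely the content of the self-improvement of $\mathcal{A}_\infty$ weights and is built into Lemma 2.2. No cancellation in $b-\fint_B b$ beyond what John--Nirenberg already provides is needed, and the constant depends on $\omega$ only through $[\omega]_{\mathcal{A}_\infty}$.
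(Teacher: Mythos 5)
Your argument is correct, and it is essentially the standard proof of this weighted John--Nirenberg estimate; the paper itself does not prove the lemma but simply cites it from \cite{LKY16}. Your chain of reasoning---normalize $\|b\|_{BMO}=1$, write the weighted $L^p$ norm by the layer-cake formula, apply the classical John--Nirenberg exponential decay $|E_\lambda|\leqslant c_1|B|\e^{-c_2\lambda}$, and then use the $\mathcal{A}_\infty$ distributional estimate $\omega(E_\lambda)/\omega(B)\lesssim(|E_\lambda|/|B|)^\eta$ (which follows from $\omega\in RH_r$ for some $r>1$, with $\eta=(r-1)/r>0$, as recorded in Lemma 2.2)---is exactly right, and the final integral $\int_0^\infty \lambda^{p-1}\e^{-c_2\eta\lambda}\,d\lambda$ converges to a constant depending only on $n$, $p$ and $[\omega]_{\mathcal{A}_\infty}$. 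One small stylistic note: you invoke ``Lemma 2.2 applied with a suitable $\mathcal{A}_q$ class containing $\omega$,'' but only the $RH_r$ half of Lemma 2.2 is needed for the upper bound, so you could equally cite the equivalence of $\mathcal{A}_\infty$ with the existence of a reverse H\"older exponent directly, which makes the dependence on the weight transparent.
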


\begin{lemma}\rm(\cite[Theorem 1.1]{HW22})\em \label{estimate 1}
	For any $0<\alpha<1$, $\frac{n}{n-\alpha}\leqslant p<1$, $\frac{1}{q}=\frac{1}{p}-\frac{\alpha}{n}$, $\omega^\frac{n}{n-\alpha} \in \mathcal{A}_1$ with $\int_{\R^n} \frac{\omega^p(x)}{(1+|x|)^{np}}\,dx$ and $b\in BMO_{\omega^p,p}(\R^n)$,
	$$\|(b-\fint_B b)a\|_{H^p_{\omega^p}(\R^n)}\lesssim \|b\|_{BMO_{\omega^p,p}(\R^n)}$$
	for any $(H^p_{\omega^p}-\infty)$ atom $a$ supported in a ball $B$.
\end{lemma}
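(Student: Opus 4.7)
The plan is to verify the Hardy-space bound via the grand maximal function characterization of $H^p_{\omega^p}(\R^n)$. Setting $f=(b-\fint_B b)a$, the function $f$ is supported in $B$ and satisfies the pointwise bound $|f(y)|\leq|b(y)-\fint_B b|\,\omega(B)^{-1/p}$ since $a$ is an $(H^p_{\omega^p}-\infty)$ atom. It suffices to control $\|\mathcal{M}_N f\|_{L^p_{\omega^p}}$, which I would do by splitting $\R^n=2B\cup(2B)^c$ and estimating the two parts separately.

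For the local part on $2B$, dominate $\mathcal{M}_N f(x)$ by the Hardy--Littlewood maximal function $Mf(x)$. Because $p<1$ obstructs any direct $L^p$-boundedness of $M$, I would pass through a larger exponent $\sigma>1$, chosen so that $\omega^\sigma\in\mathcal{A}_\sigma$ (this is available since $\omega^{n/(n-\alpha)}\in\mathcal{A}_1$ forces $\omega\in\mathcal{A}_1$ and consequently $\omega^\sigma\in\mathcal{A}_\sigma$ for $\sigma$ sufficiently close to $1$). By H\"older's inequality, $\|Mf\|_{L^p_{\omega^p}(2B)}\lesssim|2B|^{1/p-1/\sigma}\|Mf\|_{L^\sigma_{\omega^\sigma}(2B)}$, and by the weighted $L^\sigma$-boundedness of $M$, this is further dominated by $|2B|^{1/p-1/\sigma}\|f\|_{L^\sigma_{\omega^\sigma}}$. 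The remaining norm is handled by combining the John--Nirenberg inequality for $b$ with the reverse H\"older inequality for $\omega$ (available since $\omega\in RH_{n/(n-\alpha)}$), yielding a bound controlled by $\|b\|_{BMO(\R^n)}$, which in turn is dominated by $\|b\|_{BMO_{\omega^p,p}(\R^n)}$ up to a harmless constant.

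For the global part on $(2B)^c$, I would exploit the cancellation $\int a=0$. Writing $\phi_t*f(x)=\int_B[\phi_t(x-y)-\phi_t(x-x_B)](b(y)-\fint_B b)a(y)\,dy+\phi_t(x-x_B)\int_B(b-\fint_B b)a$, the first term gains a factor $r_B/|x-x_B|$ from the Lipschitz estimate for $\phi_t$, while for the second one uses $|\phi_t(x-x_B)|\lesssim|x-x_B|^{-n}$ together with $|\int_B(b-\fint_B b)a|\leq\|b-\fint_B b\|_{L^1(B)}\,\omega(B)^{-1/p}\lesssim|B|\,\|b\|_{BMO(\R^n)}\,\omega(B)^{-1/p}$. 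Both contributions yield $\mathcal{M}_N f(x)\lesssim|B|\,\omega(B)^{-1/p}\,\|b\|_{BMO(\R^n)}\,|x-x_B|^{-n}$ for $x\notin 2B$. Raising to the $p$-th power and integrating against $\omega^p$ over $(2B)^c$ produces the integral $\int_{B^c}\omega^p(x)|x-x_B|^{-np}\,dx$, which combines with the quantity $\int_B|b-\fint_B b|$ exactly according to the definition of $\|b\|_{BMO_{\omega^p,p}(\R^n)}$.

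The main obstacle is the local estimate on $2B$: because $p<1$ prevents any direct maximal-function argument in $L^p_{\omega^p}$, the interplay between H\"older's inequality, reverse H\"older for $\omega$, and John--Nirenberg for $b$ must produce constants that cancel against the atom normalization $\omega(B)^{-1/p}$ without leaving residual powers of $\omega(B)$ or $|B|$. This delicate balancing of exponents is precisely what forces the scaling hypotheses $p\geq n/(n-\alpha)$ and $\omega^{n/(n-\alpha)}\in\mathcal{A}_1$ appearing in the statement.
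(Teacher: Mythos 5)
The statement you are proving is not proved in the paper: it is Lemma~\ref{estimate 1}, which the authors simply quote as \cite[Theorem 1.1]{HW22}. So there is no in-paper proof to compare against; I can only evaluate your sketch on its own terms.

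Your overall strategy (grand maximal function, split $\R^n=2B\cup(2B)^c$, local estimate via $M$ on $L^\sigma_{\omega^\sigma}$, far-field estimate via subtracting $\phi_t(x-x_B)$) is the standard one for this kind of weighted atomic estimate, and I believe it can be made to close. But there are two genuine gaps in the way you present it. First, in the far-field term you bound $\big|\int_B(b-\fint_B b)a\big|\lesssim|B|\,\|b\|_{BMO(\R^n)}\,\omega^p(B)^{-1/p}$, and then assert that ``raising to the $p$-th power and integrating\ldots combines with the quantity $\int_B|b-\fint_B b|$ exactly according to the definition of $\|b\|_{BMO_{\omega^p,p}(\R^n)}$.'' Those two steps contradict each other: once you have passed to $|B|\,\|b\|_{BMO(\R^n)}$ you have \emph{discarded} the quantity $\int_B|b-\fint_B b|$ that the definition requires, and since $|B|\,\|b\|_{BMO(\R^n)}\geqslant\int_B|b-\fint_B b|$ the substitution goes the wrong way for an upper bound. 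The argument works only if you keep $\int_B|b-\fint_B b|$ intact and pair it with $\big[\omega^p(B)^{-1}\int_{B^c}\omega^p(x)|x-x_B|^{-np}\,dx\big]^{1/p}$, which by definition is precisely $\leqslant\|b\|_{BMO_{\omega^p,p}(\R^n)}$. (Also, the decomposition you write exploits only the smoothness of $\phi_t$, not the cancellation $\int a=0$; the role of the cancellation is subsumed in the second term, and it is exactly the $BMO_{\omega^p,p}$ condition that controls that term.) Second, the local part as you describe it ultimately produces a bound by $\|b\|_{BMO(\R^n)}^p$ (after combining John--Nirenberg, reverse H\"older for $\omega$, and the atom normalization). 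To conclude you need $\|b\|_{BMO(\R^n)}\lesssim\|b\|_{BMO_{\omega^p,p}(\R^n)}$, and you never address this. It is true: since $\omega^p\in\mathcal{A}_\infty$, $\omega^p(2B\setminus B)\gtrsim\omega^p(B)$, so $\int_{B^c}\omega^p(x)|x-x_B|^{-np}\,dx\gtrsim r_B^{-np}\omega^p(B)$, whence the bracketed factor in the definition of $\|b\|_{BMO_{\omega^p,p}(\R^n)}$ is $\gtrsim|B|^{-1}$ and $\|b\|_{BMO_{\omega^p,p}(\R^n)}\gtrsim\sup_B\fint_B|b-\fint_B b|=\|b\|_{BMO(\R^n)}$. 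But this comparison is essential and must be stated. Finally a notational slip: for an $(H^p_{\omega^p}-\infty)$ atom the normalization is $\|a\|_{L^\infty}\leqslant[\omega^p(B)]^{-1/p}$, not $\omega(B)^{-1/p}$.
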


\begin{lemma}\rm(\cite{BLYZ08,HW22, MSV08})\em\label{extend}
	Let $0<p\leqslant1$ and $\omega\in \mathcal{A}_\infty$. If a linear operator T satisfies one of the following conditions:\\
	(1) T is well-defined for all the finite combinations of  continuous $(H^p_{\omega^p}-\infty)$ atoms and
	$$\sup\big\{\|Ta\|_{L^q_{\omega^q}(\R^n)} : a\ is\ a\ continuous\  (H^p_{\omega^p}-\infty) \ atom\big\}<\infty;$$
	(2) T is well-defined for all the finite combinations of  $(H^p_{\omega^p}-q)$ atoms where $q>q_\omega$ and
	$$\sup\big\{\|Ta\|_{L^q_{\omega^q}(\R^n)} : a\ is\ a\ \  (H^p_{\omega^p}-q) \ atom\big\}<\infty.$$
	Then $T$ can be uniquely and continuously extended to a bounded linear operator from $H^p_{\omega^p}(\R^n)$ to $L^q_{\omega^q}(\R^n)$.
\end{lemma}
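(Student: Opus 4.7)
The plan is a three-step density argument. First, I would establish that finite linear combinations of the specified atoms form a norm-dense subspace of $H^p_{\omega^p}(\R^n)$. For case (2) this follows from the weighted atomic decomposition theorem: any $f \in H^p_{\omega^p}$ admits a representation $f = \sum_i \lambda_i a_i$ as a sum of $(H^p_{\omega^p},q)$-atoms (valid for any $q > q_\omega$) with $(\sum_i |\lambda_i|^p)^{1/p} \lesssim \|f\|_{H^p_{\omega^p}}$, the partial sums converging in the Hardy quasi-norm. Case (1) further requires mollifying each atom to obtain continuous $\infty$-atoms while preserving the vanishing-moment condition and the size control, which is a delicate but classical construction.

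Next, I would estimate $Tf$ on the dense subspace. For $f = \sum_{i=1}^N \lambda_i a_i$ the linearity of $T$ combined with the triangle inequality in $L^q_{\omega^q}$ gives
\[
\|Tf\|_{L^q_{\omega^q}} \leqslant \sum_{i=1}^N |\lambda_i|\, \|Ta_i\|_{L^q_{\omega^q}} \leqslant M \sum_{i=1}^N |\lambda_i|,
\]
where $M$ denotes the uniform atom bound from the hypothesis. Since $0 < p \leqslant 1$, the elementary subadditivity $(a+b)^p \leqslant a^p + b^p$ iterates to give $\sum_i |\lambda_i| \leqslant (\sum_i |\lambda_i|^p)^{1/p}$; taking the infimum over all admissible atomic decompositions of $f$ then yields $\|Tf\|_{L^q_{\omega^q}} \lesssim M \|f\|_{H^p_{\omega^p}}$. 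Continuity on the dense subspace together with completeness of $L^q_{\omega^q}$ then permits a unique extension to all of $H^p_{\omega^p}$ via a standard Cauchy-sequence argument.

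\textbf{Main obstacle.} The crux is not the quasi-triangle inequality above but the assertion that finite atomic combinations are dense in the Hardy quasi-norm and that the atomic characterization of $\|\cdot\|_{H^p_{\omega^p}}$ holds on this subspace. This is exactly the issue flagged by Bownik's well-known counterexample: uniform boundedness on individual atoms does not in general suffice for $H^p$-boundedness. The hypotheses on continuous $\infty$-atoms in (1), and on $q$-atoms with $q > q_\omega$ in (2), are calibrated so that the finite atomic combinations \emph{are} norm-dense and the atomic quasi-norm \emph{does} coincide up to constants with the Hardy quasi-norm on this subspace. Verifying these equivalences in the weighted setting---particularly the mollification step for case (1), where one must simultaneously preserve compact support, the size bound, and the vanishing-moment condition---is where most of the technical effort resides.
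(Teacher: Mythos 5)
The paper cites this lemma to \cite{BLYZ08,HW22,MSV08} without proof, so there is no internal argument to compare against; what follows assesses your plan against the standard proofs in those references.

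The architecture is right and you are correct to flag Bownik's counterexample as the reason the atom classes in (1) and (2) are restricted as they are. But the pivotal step --- ``taking the infimum over all admissible atomic decompositions of $f$ then yields $\|Tf\|_{L^q_{\omega^q}}\lesssim M\|f\|_{H^p_{\omega^p}}$'' --- is a non sequitur as written, and it is precisely where the content of the lemma lives. The bound $\|Tf\|_{L^q_{\omega^q}}\leqslant M\bigl(\sum_{i=1}^N|\lambda_i|^p\bigr)^{1/p}$ is available only for \emph{finite} representations $f=\sum_{i=1}^N\lambda_i a_i$, because it comes from applying $T$ term by term via linearity; the infimum you are then entitled to take is the finite-atomic quasi-norm $\|f\|_{H^p_{\mathrm{fin}}}$, not $\|f\|_{H^p_{\omega^p}}$. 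The comparison $\|f\|_{H^p_{\mathrm{fin}}}\lesssim\|f\|_{H^p_{\omega^p}}$ on the dense subspace is exactly the Meda--Sj\"ogren--Vallarino theorem (and its weighted counterpart); your plan defers it to ``most of the technical effort'' without supplying the mechanism, so the central step of the proof is still missing. It is also worth noting that for case (2) the usual and more elementary mechanism bypasses the abstract norm comparison, and this paper itself tacitly uses it in proving the weighted Hardy estimate for $[b,T_{\Omega,\alpha}^m]$: one chooses the atomic decomposition so that the partial sums converge to $f$ \emph{in $L^q$} (this is the content of the cited result of Rocha recorded as Lemma 3.17/\cite{R20}), uses an already-available $L^r\to L^{r^*}$ bound for $T$ to get $T\bigl(\sum_{j\leqslant N}\lambda_j a_j\bigr)\to Tf$ a.e., and then applies Fatou's lemma to transfer the uniform atom estimate to $Tf$. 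In case (1) no such auxiliary $L^q$-convergence is available, and one genuinely needs the finite re-decomposition into continuous $\infty$-atoms with controlled coefficient sum from \cite{MSV08}, which your plan only gestures at.
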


\begin{lemma}\rm(\cite[Theorem 2.9]{R20})\em\label{char}
	Let $\omega \in \mathcal{A}_\infty$ with critical index $q_w$ and $r_w$, $0<p\leqslant 1\leqslant max\{1,p(\frac{r_\omega}{r_{\omega-1}})\}< q\leqslant \infty$ and $d\geqslant \left\lfloor n(\frac{q_\omega}{p}-1)\right\rfloor$. For any $f\in H^p_{\omega^p}(\R^n)\cap L^q(\R^n)$, there exists a sequence of $\omega^p-(p,q,d)$ atoms $\{a_j\}_{j=1}^\infty$ and a sequence of positive constants $\{\lambda_j\}_{j=1}^\infty$ such that $\sum_{j=1}^\infty \lambda_j a_j\to f$ in $L^q(\R^n)$ with $\sum_{j=1}^\infty|\lambda_j|^p\lesssim \|f\|_{H^p_{\omega^p}(\R^n)}$.
\end{lemma}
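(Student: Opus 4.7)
The plan is to prove this atomic decomposition by running the classical Calder\'on--Zygmund/Whitney construction on level sets of the grand maximal function, but carefully tracking the weight $\omega^p \in \mathcal{A}_\infty$ and the auxiliary integrability $f \in L^q(\R^n)$ to upgrade $H^p_{\omega^p}$-convergence to $L^q$-convergence.

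First, I would fix a sufficiently large $N$ and set $\mathcal{M}_N f$ to be the grand maximal function of $f$, so that $\|\mathcal{M}_N f\|_{L^p_{\omega^p}} \sim \|f\|_{H^p_{\omega^p}}$. For $k \in \mathbb{Z}$ define the open level sets $\Omega_k := \{x : \mathcal{M}_N f(x) > 2^k\}$. Apply a Whitney cube decomposition $\Omega_k = \bigcup_j Q_{k,j}$ with a subordinate smooth partition of unity $\{\eta_{k,j}\}$, and for each $(k,j)$ choose a polynomial $P_{k,j}$ of degree at most $d = \lfloor n(q_\omega/p - 1)\rfloor$ so that the ``bad pieces''
\[
b_{k,j} := (f - P_{k,j})\eta_{k,j}
\]
have vanishing moments up to order $d$ against the weight $1$. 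Writing $g_k := f - \sum_j b_{k,j}$, one obtains the standard good/bad splitting with $g_k$ uniformly bounded by $C\, 2^k$.

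Next, I would form the telescoping expansion
\[
f \;=\; \sum_{k \in \mathbb{Z}} (g_{k+1} - g_k) \;=\; \sum_{k,j} \lambda_{k,j}\, a_{k,j},
\]
where, following the Meda--Sj\"ogren--Vallarino style reassembly, each $a_{k,j}$ is supported on a fixed dilate of $Q_{k,j}$, satisfies the correct moment conditions, and the normalizing factor is chosen so that
\[
\lambda_{k,j} \;\sim\; 2^k\,[\omega^p(Q_{k,j})]^{1/p}, \qquad \|a_{k,j}\|_{L^q(\R^n)} \leqslant \frac{|Q_{k,j}|^{1/q}}{[\omega^p(Q_{k,j})]^{1/p}},
\]
making each $a_{k,j}$ an $\omega^p$-$(p,q,d)$ atom. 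The bound
\[
\sum_{k,j} |\lambda_{k,j}|^p \;\lesssim\; \sum_k 2^{kp}\,\omega^p(\Omega_k) \;\lesssim\; \|\mathcal{M}_N f\|_{L^p_{\omega^p}}^p \;\lesssim\; \|f\|_{H^p_{\omega^p}}^p
\]
then comes from the layer-cake formula and the bounded overlap of the Whitney cubes, while the atomic size estimate on $\|a_{k,j}\|_{L^q}$ uses the $\mathcal{A}_\infty$ reverse H\"older inequality and the hypothesis $\max\{1, p\,r_\omega/(r_\omega-1)\} < q$ to interpolate between the $L^\infty$ bound on $g_{k+1}-g_k$ and the weighted measure of $\Omega_k$.

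The genuinely delicate part—and where I expect to spend most of the effort—is promoting the decomposition from convergence in $H^p_{\omega^p}$ (which is automatic from the construction) to convergence in $L^q$. For this I would exploit the extra assumption $f \in L^q(\R^n)$: first show $g_k \to 0$ in $L^q$ as $k \to -\infty$ using that $\|g_k\|_\infty \lesssim 2^k$ together with $|\mathrm{supp}(g_k)| \leqslant |\Omega_k^c|$-type reasoning and the $L^q$ integrability of $f$; and show $g_k \to f$ in $L^q$ as $k \to +\infty$ by dominating $|f - g_k| \lesssim |f|\chi_{\Omega_k} + \sum_j |P_{k,j}|\eta_{k,j}$ and applying dominated convergence, since $\omega^p(\Omega_k)\to 0$ and the polynomials $P_{k,j}$ are controlled in $L^\infty$ by averages of $f$ on cubes inside $\Omega_k$. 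The condition $q > \max\{1, p\,r_\omega/(r_\omega-1)\}$ is exactly what is needed to compare the weighted and unweighted $L^q$ norms of the $P_{k,j}\eta_{k,j}$ via H\"older's inequality with the reverse H\"older exponent of $\omega^p$. Putting these pieces together yields the claimed representation with the stated $\ell^p$-control of the coefficients.
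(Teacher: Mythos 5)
This lemma is quoted verbatim in the paper from \cite[Theorem 2.9]{R20} without any proof supplied, so there is no ``paper's own proof'' to compare against; your reconstruction follows the standard Fefferman--Stein/Calder\'on--Zygmund atomic decomposition adapted to $\mathcal{A}_\infty$ weights, which is indeed the route taken in Rocha's paper. The overall structure (grand maximal level sets, Whitney partition, polynomial corrections of degree $d$, telescoping into atoms, $\ell^p$-summability of coefficients via layer-cake and bounded overlap, and reverse H\"older to pass between $\omega^p\,dx$ and $dx$) is the right skeleton and all the normalizations are consistent with the definition of an $\omega^p$-$(p,q,d)$ atom.

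One step is stated imprecisely enough that it would not survive as written. You claim $g_k\to 0$ in $L^q$ as $k\to-\infty$ ``using $\|g_k\|_\infty\lesssim 2^k$ together with $|\mathrm{supp}(g_k)|\leqslant|\Omega_k^c|$-type reasoning.'' But $g_k$ is not supported in $\Omega_k^c$; it equals $f$ there and equals the good (polynomial-glued) part on $\Omega_k$, and $|\Omega_k^c|$ is generically infinite, so the sketched size-times-measure bound is vacuous. The correct argument splits $\|g_k\|_{L^q}^q=\int_{\Omega_k^c}|f|^q+\int_{\Omega_k}|g_k|^q$: the first piece tends to $0$ by dominated convergence because $\Omega_k^c$ shrinks to the null set $\{\mathcal{M}_N f=0\}$; for the second piece one must show $2^{kq}|\Omega_k|\to 0$ as $k\to-\infty$, which follows from $\mathcal{M}_N f\in L^q$ (here is exactly where the auxiliary hypothesis $f\in L^q$, $q>1$ enters), by splitting off the finite-measure set $\{\mathcal{M}_N f>1\}$ and applying dominated convergence on $\{2^k<\mathcal{M}_N f\leqslant 1\}$. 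Similarly, for $k\to+\infty$ the bound $2^{kq}|\Omega_k|\to 0$ needs the tail estimate $\lambda^q|\{\mathcal{M}_N f>\lambda\}|\to 0$, again from $\mathcal{M}_N f\in L^q$; the weighted decay $2^{kp}\omega^p(\Omega_k)\to 0$ alone does not transfer to unweighted Lebesgue measure without invoking reverse H\"older on bounded sets, which is not available globally. With that correction the argument goes through, and the reverse H\"older condition $q>p\,r_\omega/(r_\omega-1)$ is used, as you say, to verify the unweighted $L^q$ size condition on each atom.
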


\begin{lemma}\rm(\cite[Theorem 1.1]{RU17})\em \label{Boundedness 1}
	Let $\alpha\in [0,n)$, $m>1$, $\sum_{i=1}^m\frac{n}{q_i}=n-\alpha$, $\frac{n}{n+1}<p<1$ and $\frac{1}{q}=\frac{1}{p}-\frac{\alpha}{n}$. Suppose that $A_i,\Omega_i$ and $\omega$ satisfy $(\ref{A}), (\ref{w})$ and $\int_{0}^1 \frac{\omega_{p_i}(\delta)}{\delta^{1+n/q_i'}}\, d\delta<\infty$ respectively.
	Then for any $\omega^{\frac{sn}{n-\alpha s}} \in \mathcal{A}_1 $, there exists $C>0$ such that $\|Ta\|_{L^q_{\omega^q}(\R^n)}\leqslant C $ for any $(H^p_{\omega^p}-\infty)$ atom $a$.
\end{lemma}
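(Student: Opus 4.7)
The plan is the standard atomic recipe for $H^p_{\omega^p}$-$L^q_{\omega^q}$ estimates, adapted to the $m$-fold singular geometry of $K(x,y)$: since $K$ is singular along each hyperplane $x=A_iy$, a single-center near/far decomposition around $x_0$ is insufficient, so I decompose the target space around all $m$ image centers $A_1x_0,\ldots,A_mx_0$ simultaneously. Fix an $(H^p_{\omega^p}-\infty)$ atom $a$ supported in $B:=B(x_0,r)$, so $\|a\|_\infty\le[\omega^p(B)]^{-1/p}$ and $\int a=0$. Let $B^*_i:=B(A_ix_0,Lr)$, with $L$ large enough (depending on $\|A_i\|$ and $\|A_i^{-1}\|$) that $y\in B$ and $x\notin B^*_i$ force $|x-A_iy|\sim|x-A_ix_0|$; set $B^*:=\bigcup_i B^*_i$. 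Then
$$\|Ta\|_{L^q_{\omega^q}}^q=\int_{B^*}|Ta|^q\omega^q\,dx+\int_{(B^*)^c}|Ta|^q\omega^q\,dx=:I+II.$$

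For the near piece $I$, the hypothesis $\omega^{sn/(n-\alpha s)}\in\mathcal{A}_1$ is strong enough to produce an admissible pair $(p_0,q_0)$ with $s<p_0<n/\alpha$, $1/q_0=1/p_0-\alpha/n$, $q_0>q$, and $\omega^s\in\mathcal{A}(p_0/s,q_0/s)$. Lemma~\ref{b} then yields $\|Ta\|_{L^{q_0}_{\omega^{q_0}}}\lesssim\|a\|_{L^{p_0}_{\omega^{p_0}}}$. H\"older on $B^*$ against the measure $\omega^q\,dx$ combined with the atom bound reduces $I^{1/q}$ to a product involving $\omega^q(B^*)^{1/q-1/q_0}$, $[\omega^p(B)]^{-1/p}$, and $\omega^{p_0}(B)^{1/p_0}$. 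The dilation estimate $\omega^q(B^*)\lesssim\omega^q(B)$ (from $\omega(A_ix)\lesssim\omega(x)$ plus doubling), together with the scaling identity $1/p_0-1/q_0=\alpha/n$, makes all powers of $\omega(B)$ cancel, yielding $I\lesssim 1$.

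For the far piece $II$, use $\int a=0$ to write, for $x\in(B^*)^c$,
$$Ta(x)=\int_B\bigl[K(x,y)-K(x,x_0)\bigr]a(y)\,dy.$$
With $\kappa_i(x,y):=\Omega_i(x-A_iy)/|x-A_iy|^{n/q_i}$, telescope
$$K(x,y)-K(x,x_0)=\sum_{i=1}^m\Bigl(\prod_{j<i}\kappa_j(x,x_0)\Bigr)\bigl[\kappa_i(x,y)-\kappa_i(x,x_0)\bigr]\Bigl(\prod_{j>i}\kappa_j(x,y)\Bigr).$$
Each single-factor difference $\kappa_i(x,y)-\kappa_i(x,x_0)$ splits into a spherical piece bounded in $L^{p_i}(S^{n-1})$ by the modulus $\omega_{p_i}\bigl(|A_i(y-x_0)|/|x-A_ix_0|\bigr)$ and a radial piece bounded pointwise by $|A_i(y-x_0)|\cdot|x-A_ix_0|^{-1-n/q_i}$ via the mean value theorem. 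Slicing $(B^*_i)^c$ into dyadic shells $S_k:=\{2^kLr\le|x-A_ix_0|<2^{k+1}Lr\}$, I apply H\"older in the angular variable on each $S_k$ against the $L^{p_i}$-modulus, bound the remaining $m-1$ factors $\kappa_j$ pointwise in $|x-A_jx_0|$ (using $\sum_j n/q_j=n-\alpha$), and integrate radially against $\omega^q$ exploiting the $\mathcal{A}_1$-doubling of $\omega^q$ on dyadic dilates. The sum over $k$ reduces to $\sum_k\omega_{p_i}(2^{-k})\,2^{-k\varepsilon}$ for some $\varepsilon>0$, which is finite by the Dini-type assumption $\int_0^1\omega_{p_i}(\delta)\,\delta^{-1-n/q_i'}\,d\delta<\infty$.

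The principal obstacle is exponent bookkeeping in the far estimate. Three ingredients must be matched at once: the decay of the $L^{p_i}$-modulus $\omega_{p_i}(2^{-k})$, the weighted annular growth $\omega^q(S_k)$ (controlled through $\omega^{sn/(n-\alpha s)}\in\mathcal{A}_1$), and the polynomial decay supplied by the remaining $m-1$ factors $\kappa_j$ (regulated by $\sum_j n/q_j=n-\alpha$). Any crude H\"older estimate along the way collapses the Dini tail and kills summability in $k$. Choosing the H\"older exponents across the $m$ factors correctly, using both $p_i>q_i$ and $\sum_i 1/p_i+1/s=1$, is the step where the full hypothesis set is genuinely needed.
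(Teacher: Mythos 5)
Flag a threshold matter first: the paper does not prove this lemma at all; it is imported verbatim as \cite[Theorem 1.1]{RU17} and used as a black box in the proof of Theorem \ref{RESULT 1}. There is therefore no in-paper argument to compare yours against, and the assessment below is of your reconstruction on its own terms.

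Your skeleton --- atomic estimate, $m$-center near/far split exploiting $\omega(A_ix)\lesssim\omega(x)$, cancellation via $\int a=0$, telescoping the $m$-fold kernel, dyadic annuli plus the Dini hypothesis --- is the right recipe and broadly matches the Riveros--Urciuolo strategy. Two of the details, though, would sink the far-piece estimate as written. On the decomposition: slicing $(B^*_i)^c$ into shells $\{2^kLr\le|x-A_ix_0|<2^{k+1}Lr\}$ around each center $A_ix_0$ separately, then bounding the remaining $m-1$ factors $\kappa_j$ ``pointwise in $|x-A_jx_0|$,'' does not deliver the decay $|x-A_ix_0|^{-(n-\alpha)}$ you need when you invoke $\sum_j n/q_j=n-\alpha$, because on a shell where $|x-A_ix_0|\sim 2^kLr$ the other distances $|x-A_jx_0|$ may still be as small as $Lr$. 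One must partition $(B^*)^c$ by \emph{nearest} center, exactly as the paper itself does in the analogous step of the proof of Theorem \ref{RESULT 1} via the sets $Q^i_j=\{\,|x-A_ix_a|=\min_k|x-A_kx_a|,\ |x-A_ix_a|\sim 2^jMr\,\}$; only then is $|x-A_kx_0|\ge|x-A_ix_0|$ for every $k$, so that the whole product scales like $(2^jLr)^{-(n-\alpha)}$ times a single-factor difference.

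On the bookkeeping, which you yourself call ``the principal obstacle'': the assertion that the far sum ``reduces to $\sum_k\omega_{p_i}(2^{-k})\,2^{-k\varepsilon}$ for some $\varepsilon>0$, which is finite by the Dini-type assumption'' carries a sign error that renders the hypothesis vacuous. That sum converges for \emph{every} $\Omega_i\in L^{p_i}(S^{n-1})$, since $\omega_{p_i}$ is bounded and $\sum_k 2^{-k\varepsilon}$ is geometric; the strengthened Dini condition $\int_0^1\omega_{p_i}(\delta)\delta^{-1-n/q_i'}\,d\delta<\infty$ would then be doing no work, which cannot be correct. The honest accounting for $p<1$ produces a \emph{nonnegative} power of $2^k$: already in the unweighted model $m=1$, combining $\|a\|_\infty\lesssim|B|^{-1/p}$, the kernel decay $(2^kr)^{\alpha-n}$, and the size of the $k$-th annulus, the $r$-powers cancel and the $2^k$-exponent comes out to roughly $n(1/p-1)$, positive precisely when $p<1$. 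This is exactly what forces both the restriction $p>n/(n+1)$ and the extra weight $\delta^{-n/q_i'}$ in the Dini integral; monotonicity of $\omega_{p_i}$ then yields $\omega_{p_i}(\delta)=o(\delta^{n/q_i'})$ as $\delta\to0$, which is what rescues summability in $k$. Until this positive-exponent tension is surfaced and closed --- together with the nearest-center partition --- the proposal is a plan rather than a proof.
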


\begin{theorem}\label{RESULT 1}
	Let $\alpha\in [0,1)$, $m>1$, $\{p_i,q_i,s\}^m_{i=1}$ be constants connected to $T_{\Omega,\alpha}^m$ as before, $s<p<\frac{n}{\alpha}$ and $\frac{1}{q}=\frac{1}{p}-\frac{\alpha}{n}$. Suppose that $A_i,\Omega_i$ and $\omega$ satisfy $(\ref{A}), (\ref{w})$ and $\int_{0}^1 \frac{\omega_{i,p_i}(\delta)}{\delta^{1+n/q_i'}} \,d\delta<\infty$ respectively.
	Then for any $\omega^{\frac{sn}{n-\alpha s}} \in \mathcal{A}_1 $ and $b\in \cap_i BMO^{A_i}(\R^n)\cap BMO_{\omega^p,p}(\R^n)$,  the commutator $[b,T_{\Omega,\alpha}^m]$ can be extended to a bounded operator from $H^p_{\omega^p}(\R^n)$ to $L^q_{\omega^q}(\R^n)$.
\end{theorem}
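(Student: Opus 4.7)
The plan is to follow the classical commutator-on-Hardy-space scheme, adapted to the present multiple-kernel setting via the new space $BMO^A$. By Lemma~\ref{extend}(1), it suffices to bound $\|[b,T_{\Omega,\alpha}^m]a\|_{L^q_{\omega^q}(\R^n)}$ uniformly over continuous $(H^p_{\omega^p}\text{-}\infty)$ atoms $a$ supported in a ball $B=B(x_B,r_B)$. Using the identity
$$[b,T_{\Omega,\alpha}^m]a=(b-b_B)\,T_{\Omega,\alpha}^m a-T_{\Omega,\alpha}^m\bigl((b-b_B)a\bigr),\qquad b_B:=\fint_B b,$$
I split the bound into $\mathrm{I}:=\|(b-b_B)T_{\Omega,\alpha}^m a\|_{L^q_{\omega^q}}$ and $\mathrm{II}:=\|T_{\Omega,\alpha}^m((b-b_B)a)\|_{L^q_{\omega^q}}$.

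The quantity $\mathrm{II}$ is disposed of directly: Lemma~\ref{estimate 1}, fed by the hypothesis $b\in BMO_{\omega^p,p}(\R^n)$, gives $(b-b_B)a\in H^p_{\omega^p}$ with norm $\lesssim \|b\|_{BMO_{\omega^p,p}}$, while Lemma~\ref{Boundedness 1} together with Lemma~\ref{extend}(1) extends $T_{\Omega,\alpha}^m$ to a bounded operator from $H^p_{\omega^p}$ to $L^q_{\omega^q}$, whose composition controls $\mathrm{II}$.

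For $\mathrm{I}$, fix an enlargement $B^*:=B(x_B,c_0 r_B)$ with $c_0$ large enough (depending only on $\|A_i\|$ and $\|A_i^{-1}\|$) that $A_i(B)\subset B^*$ for every $i$, and split $\mathrm{I}\leqslant \mathrm{I}_{\mathrm{loc}}+\mathrm{I}_{\mathrm{glo}}$ along $B^*$ and $\R^n\setminus B^*$. On $B^*$, pick $r_0\in (s,p)$ with $1/r=1/r_0-\alpha/n$ so that Lemma~\ref{b} delivers $\|T_{\Omega,\alpha}^m a\|_{L^r_{\omega^r}}\lesssim\|a\|_{L^{r_0}_{\omega^{r_0}}}$; combining this with the atom size $|a|\leqslant \omega^p(B)^{-1/p}\chi_B$, the weighted $\mathcal{A}_\infty$ John--Nirenberg estimate of~\cite{LKY16} for $b$, and H\"older's inequality closes out $\mathrm{I}_{\mathrm{loc}}$ at total cost $\lesssim \|b\|_{BMO}$.

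The term $\mathrm{I}_{\mathrm{glo}}$ is the main obstacle, because $K(x,y)$ has $m$ distinct singular centers $x=A_i x_B$ when $y\in B$. I would partition $\R^n\setminus B^*$ into the shells $E_{i,k}:=A_i(2^k B^*)\setminus A_i(2^{k-1}B^*)$, $i=1,\dots,m$, $k\geqslant 1$. Using the cancellation $\int a=0$, the smoothness hypothesis $\int_0^1\omega_{i,p_i}(\delta)/\delta^{1+n/q_i'}\,d\delta<\infty$, and a telescoping expansion of $K(x,y)-K(x,x_B)$ through its $m$ factors, one obtains pointwise decay of $T_{\Omega,\alpha}^m a$ on $E_{i,k}$ with a small geometric gain $2^{-k\varepsilon}$ against the natural singular-integral average. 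For the BMO factor on $E_{i,k}$ the key decomposition is
$$b(x)-b_B=\bigl(b(x)-b_{A_i(2^k B^*)}\bigr)+\bigl(b_{A_i(2^k B^*)}-b_{2^k B^*}\bigr)+\bigl(b_{2^k B^*}-b_B\bigr),$$
where the first piece is absorbed by John--Nirenberg on $A_i(2^k B^*)$, the third by the classical dyadic telescope $|b_{2^k B^*}-b_B|\lesssim k\|b\|_{BMO}$, and the second---the new ingredient in this multi-matrix setting---is dominated uniformly in $k$ by $\|b\|_{BMO^{A_i}}$, straight from Definition~\ref{BMOA} applied to the ball $2^k B^*$. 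Summing the geometric-in-$k$ series with the $RH_r$-gain supplied by Lemma~2.2, and then summing over $i=1,\dots,m$, yields the required bound on $\mathrm{I}_{\mathrm{glo}}$ and completes the proof.
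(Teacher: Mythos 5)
There is a fatal geometric error in your treatment of term $\mathrm{I}$. You propose to split $\R^n$ along a single ball $B^*=B(x_B,c_0 r_B)$ and claim that $c_0$, depending only on $\|A_i\|$ and $\|A_i^{-1}\|$, can be chosen so that $A_i(B)\subset B^*$ for every $i$. This is false: the image $A_i(B(x_B,r_B))=\{A_ix:x\in B\}$ is an ellipsoid centered at $A_i x_B$, and the displacement $|A_i x_B - x_B|$ is unbounded relative to $r_B$ (take, for instance, $n=1$, $A_1=2$, $x_B$ large, $r_B=1$). Consequently $B^*$ does not contain the singular set $\bigcup_i A_i(B)$ of $x\mapsto T_{\Omega,\alpha}^m a(x)$, your "global" region $\R^n\setminus B^*$ still contains the singularities, and the shell decomposition $E_{i,k}=A_i(2^kB^*)\setminus A_i(2^{k-1}B^*)$ neither partitions $\R^n\setminus B^*$ nor screens the singular centers: a point $x$ near $A_ix_B$ sits inside $A_i(B^*)$ but may lie in $E_{j,k}$ for some $j\neq i$, where no smoothness of the $i$-th kernel factor is available. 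The paper avoids this by working with balls $B_i=B(A_ix_a,M\sqrt nR)$ centered at the images $A_ix_a$ of the atom's center, splitting along $\bigcup_i B_i$, and partitioning $(\bigcup_i B_i)^c$ into annuli $Q^i_j$ selected by the nearest center $\min_k|x-A_kx_a|$. Your three-term decomposition of $b(x)-b_B$ using $BMO^{A_i}$ to bridge $b_{A_i(2^kB^*)}$ and $b_{2^kB^*}$ is the right idea and matches the paper in spirit, but it must be deployed on that corrected geometry.

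Your treatment of $\mathrm{II}$ is also incomplete. You compose the $H^p_{\omega^p}\to L^q_{\omega^q}$ extension of $T_{\Omega,\alpha}^m$ (obtained from Lemma~\ref{Boundedness 1} and Lemma~\ref{extend}) with the $H^p$ bound on $(b-b_B)a$ from Lemma~\ref{estimate 1}. But the quantity $\mathrm{II}$ involves the actual integral operator applied to $(b-b_B)a$, and one must verify that this agrees with the abstract continuous extension on that element; moreover, the atomic decomposition available for $(b-b_B)a$ (Lemma~\ref{char}) produces $\omega^p$-$(p,r,d)$ atoms with only an $L^r$ size bound, not the $(H^p_{\omega^p}\text{-}\infty)$ atoms covered by Lemma~\ref{Boundedness 1}. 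The paper therefore proves a separate uniform $L^q_{\omega^q}$ bound for $T_{\Omega,\alpha}^m$ on $\omega^p$-$(p,r,d)$ atoms (choosing $r>\max\{1,p\,r_{\omega^p}/(r_{\omega^p}-1),pqs/(q(2s-1)-ps)\}$ so that a reverse-Hölder gain closes the estimate), uses $L^r$-convergence of the atomic series plus the unweighted $L^r\to L^{nr/(n-\alpha r)}$ boundedness of $T_{\Omega,\alpha}^m$ to obtain a.e.\ convergence of the images, and then applies Fatou. Without these steps your bound for $\mathrm{II}$ does not follow.
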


\begin{proof}
	We first observe that for any continuous $(H^p_{\omega^p}-\infty)$ atom $a$ supported in a ball denoted by $B=B(x_{a},R)$,
	$$\|[b,T_{\Omega,\alpha}^m]a\|_{L^q_{\omega^q}(\R^n)}
	\leqslant \|T_{\Omega,\alpha}^m (b-\fint_B b)a\|_{L^q_{\omega^q}(\R^n)}+\|(b-\fint_B b)T_{\Omega,\alpha}^m a\|_{L^q_{\omega^q}(\R^n)}.$$
	Thanks to Lemma \ref{extend}, we are reduced to show that the two items above are bounded uniformly.\\
	For the first one, we start with the observation that
	there exists a positive constant $C$ such that $\|T_{\Omega,\alpha}^m a'\|_{L^q_{\omega^q}(\R^n)}\leqslant C$ for any $\omega^p-(p,r,d)$ atom $a'$ supported on a ball denoted by $B(x_{a'},R')$.
	Set $M=\max\limits_{i=1,2,\cdots,m}\|A_i\|$ , $B'_i=B(A_i x_{a'},MR')$ and consequently
	\begin{align}\label{item 3}
		&\nonumber\bigg(\int_{B'_i} |T_{\Omega,\alpha}^m a'(x)|^q \omega^q(x) \,dx \bigg)^{\frac{1}{q}}\\
		&\nonumber\leqslant \bigg(\int_{B'_i} |T_{\Omega,\alpha}^m a'(x)|^{\frac{nr}{n-\alpha r}} \,dx \bigg)^{\frac{1}{r}-\frac{\alpha}{n}}\bigg(\int_{B'_i} \omega^{\frac{1}{\frac{1}{p}-\frac{1}{r}}} (x)\,dx \bigg)^{{\frac{1}{p}}-\frac{1}{r}}\\
		&\lesssim\|a'\|_{L^r(\R^n)}\bigg[\omega^{\frac{1}{\frac{1}{p}-\frac{1}{r}}}(B'_i)\bigg]^{{\frac{1}{p}}-\frac{1}{r}}
		\leqslant\frac{|B(x_{a'},R')|^\frac{1}{r}}{[\omega\big(B(x_{a'},R')\big)]^\frac{1}{p}}\bigg[\omega^{p\cdot\frac{1}{1-\frac{p}{r}}}(B'_i)\bigg]^{{\frac{1}{p}}-\frac{1}{r}}.
	\end{align}
	If $r>\frac{pqs}{q(2s-1)-ps}$, then $\omega^p\in RH_{\frac{qs}{p(s-q(s-1))}}\subset RH_{\frac{1}{1-\frac{p}{r}}}$ and, in consequence,
	$$\bigg[\omega^{p\cdot\frac{1}{1-\frac{p}{r}}}(B'_i)\bigg]^{{\frac{1}{p}}-\frac{1}{r}}\lesssim \frac{\big[\omega^p(B'_i)\big]^{\frac{1}{p}}}{|B'_i|^{\frac{1}{r}}}.$$
	Now (\ref{item 3}) turns out to be bounded uniformly.
	According to the proof of \cite[Theorem 1.1]{RU17},
	\begin{align*}
		&\bigg(\int_{({\cup B'_i})^c}|T_{\Omega,\alpha}^m a'(x)|^q \omega^q(x) \,dx \bigg)^{\frac{1}{q}}\\
		&\lesssim \int_{B(x_{a'},R')} |a'(z)| \,dz\cdot R^{\alpha-\frac{n}{s}}\cdot\sum\limits_{i=1}^m \bigg( \omega^{\frac{qs}{s-q(s-1)}}\big(B(x_{a'},R')\big)\bigg)^{\frac{1}{q}-\frac{1}{s'}}\\
		&\lesssim \|a'\|_{L^r(\R^n)} \|\chi_{B(x_{a'},R')}\|_{L^{r'}(\R^n)}\cdot R^{\alpha-\frac{n}{s}}\cdot\sum\limits_{i=1}^m \bigg( \omega^{\frac{qs}{s-q(s-1)}}\big(B(x_{a'},R')\big)\bigg)^{\frac{1}{q}-\frac{1}{s'}}\\
		&\lesssim R^n\cdot [\omega^p(B(x_{a'},R'))]^{-\frac{1}{p}}\cdot R^{\alpha-\frac{n}{s}}\cdot\sum\limits_{i=1}^m \bigg( \omega^{\frac{qs}{s-q(s-1)}}\big(B(x_{a'},R')\big)\bigg)^{\frac{1}{q}-\frac{1}{s'}}\lesssim 1.
	\end{align*}
	
	Since $(b-\fint_B b)a\in L^r(\R^n)$ for some $r>max\{1,p\frac{r_{\omega^p}}{r_{\omega^p}-1},\frac{pqs}{q(2s-1)-ps}\}$, Lemma \ref{char} shows that there exists a sequence of $\omega^p-(p,r,d)$ atoms $\{a_j\}_{j=1}^\infty$ and a sequence of positive constants $\{\lambda_j\}_{j=1}^\infty$ such that $$\sum_{j=1}^\infty \lambda_j a_j\to (b-\fint_B b)a \ in \ L^r(\R^n) \ \ and\ \ \sum_{j=1}^\infty |\lambda_j|^p\lesssim \|(b-\fint_B b)a\|^p_{H^p_{\omega^p}(\R^n)}.$$
	Besides, $T_{\Omega,\alpha}^m$ is bounded from $L^r(\R^n)$ to $L^{\frac{nr}{n-\alpha r}}(\R^n)$ by \cite{GU99} which implies  $$T_{\Omega,\alpha}^m(\sum_{j=1}^N \lambda_j a_j)\to T_{\Omega,\alpha}^m\big((b-\fint_B b)a\big) \ a.e.\ as\ N\to \infty$$ and hence Fatou's lemma, our observation above and Lemma \ref{estimate 1} yield
	\begin{align*}
		\|T_{\Omega,\alpha}^m \big((b-\fint_B b)a\big)\|_{L^q_{\omega^q}(\R^n)}
		&\leqslant \sum_{j=1}^\infty |\lambda_j|\|T_{\Omega,\alpha}^m a_j\|_{L^q_{\omega^q}(\R^n)} \lesssim \bigg(\sum_{j=1}^\infty |\lambda_j|^p\bigg)^{\frac{1}{p}}\\
		&\lesssim \|(b-\fint_B b)a\|_{H^p_{\omega^p}(\R^n)} \lesssim \|b\|_{BMO_{\omega^p,p}(\R^n)} .
	\end{align*}
	
	We are now in a position to consider the second item. Recall the definition of $BMO^A(\R^n)$ and choose some positive constants $\{C(A_i,b)\}_{i=1}^m$ such that $|\fint_{B_0} b-\fint_{A_i(B_0)} b|\leqslant C(A_i,b)$ for any ball $B_0\subset \R^n$. Choose $M'=\max\limits_{i}\|A^{-1}_i\|$, $B_i=B(A_i x_{a},M\sqrt n R)$, $\tilde{B}_i =A_i(B(x_a, MM'\sqrt n R))$, $\{p^*, q^*\}$ satisfying the assumptions in \cite[Theorem 3.4]{RU14} and then it follows that	
	\begin{align*}
		&\bigg(\int_{ B_i}|(b(x)-\fint_B b) T_{\Omega,\alpha}^m a(x))|^q \omega^q(x) \,dx \bigg)^{\frac{1}{q}}\\
		&\leqslant \bigg(\int_{\tilde{B}_i}|(b(x)-\fint_B b) T_{\Omega,\alpha}^m a(x)|^q \omega^q(x) \,dx \bigg)^{\frac{1}{q}}\\
		&\leqslant \bigg|\fint_B b-\fint_{\tilde{B}_i} b\bigg|\|T_{\Omega,\alpha}^m a\|_{L^q_{\omega^q}(\R^n)}
		+\bigg(\int_{\tilde{B}_i}|(b(x)-\fint_{\tilde{B}_i} b) T_{\Omega,\alpha}^m a(x))|^q \omega^q(x) \,dx \bigg)^{\frac{1}{q}}\\
		&\lesssim \bigg|\fint_B b-\fint_{B(x_a, MM'\sqrt n R)}b\bigg|+\bigg|\fint_{B(x_a, MM'\sqrt n R)}b-\fint_{\tilde{B}_i} b\bigg|\\
		&\ \ \ \ +\|T_{\Omega,\alpha}^m a\|_{L^{q^*}_{\omega^{q^*}}(\R^n)}\bigg(\int_{\tilde{B}_i}|b(x)-\fint_{\tilde{B}_i} b|^{q(\frac{q^*}{q})'} \,dx \bigg)^{\frac{1}{q}-\frac{1}{q^*}}\\
		&\lesssim \|a\|_{L^{p^*}_{\omega^{p^*}}(\R^n)}\cdot\|b\|_{BMO(\R^n)}\cdot|B_i|^{{\frac{1}{q}-\frac{1}{q^*}}}+\|b\|_{BMO(\R^n)}+C(A_i,b)\\
		&\lesssim R^{\frac{n}{q}-\frac{n}{q^*}}\cdot \bigg[\omega^p(B)\bigg]^{-\frac{1}{p}}\cdot \bigg[\omega^{p^*}(B)\bigg]^{\frac{1}{p^*}}\cdot \|b\|_{BMO(\R^n)}+C(A_i,b)\\
		&\lesssim \|b\|_{BMO(\R^n)}+C(A_i,b).
	\end{align*}
	Finally we calculate that
	\begin{align*}
		&\bigg(\int_{({\cup_i B_i})^c}|\big(b(x)-\fint_B b\big) T_{\Omega,\alpha}^m a(x)|^q \omega^q(x) \,dx \bigg)^{\frac{1}{q}}\\
		&= \|\int_B(K(x,y)-K(x,x_a))(b(x)-\fint_B b)\omega(x)\chi_{({\cup_i B_i})^c}(x)a(y)dy\|_{L^{q}(\R^n)}\\
		&\leqslant\int_B\|(K(x,y)-K(x,x_a))(b(x)-\fint_B b)\omega(x)\|_{L^{q}\big({({\cup_i B_i})^c}\big)}a(y)dy\\
		&\leqslant R^n\cdot \bigg[\omega^p(B)\bigg]^{-\frac{1}{p}}\cdot\sum\limits_{i=1}^m \sum\limits_{j=1}^\infty
		\bigg(\int_{Q^i_j}|b(x)-\fint_B b|^q |K(x,y)-K(x,x_a)|^q \omega^q(x) \,dx \bigg)^{\frac{1}{q}}
	\end{align*}
	where $Q^i_j=\big\{x\in({\cup_i B_i})^c:|x-A_i x_a|=\min\limits_k |x-A_k x_a|, \ 2^{j}M\sqrt n R\leqslant|x-A_i x_a|\leqslant2^{j+1}M\sqrt n R\big\}$ and
	\begin{align*}
		&\sum\limits_{j=1}^\infty
		\bigg(\int_{Q^i_j}|b(x)-\fint_B b|^q |K(x,y)-K(x,x_a)|^q \omega^q(x) \,dx \bigg)^{\frac{1}{q}}\\
		&\lesssim \sum\limits_{j=1}^\infty \bigg\|\frac{\Omega_1(\cdot-A_1 y)}{|\cdot-A_1 y |^{\frac{n}{q_1}}}-\frac{\Omega_1(\cdot-A_1 y)}{|\cdot-A_1 x_a |^{\frac{n}{q_1}}}\bigg\|_{L^{p_1}({Q^i_j})}
		\cdot \prod\limits_{i=2}^m\bigg\|\frac{\Omega_i(\cdot-A_i y)}{|\cdot-A_i y |^{\frac{n}{q_i}}}\bigg\|_{L^{p_i}({Q^i_j})}\\
		&\ \ \ \ \cdot \|\omega(b-\fint_B b)\|_{L^{\frac{qs}{s-q(s-1)}}({Q^i_j})}.
	\end{align*}
	Estimates for the items $\bigg\|\frac{\Omega_1(\cdot-A_1 y)}{|\cdot-A_1 y |^{\frac{n}{q_1}}}-\frac{\Omega_1(\cdot-A_1 y)}{|\cdot-A_1 x_a |^{\frac{n}{q_1}}}\bigg\|_{L^{p_1}({Q^i_j})}$ and $\bigg\|\frac{\Omega_i(\cdot-A_i y)}{|\cdot-A_i y |^{\frac{n}{q_i}}}\bigg\|_{L^{p_i}({Q^i_j})}$ have already been obtained by the authors in \cite{RU17}.
	Applying their results and
	\begin{align*}
		&\|\omega(b-\fint_B b)\|_{L^{\frac{qs}{s-q(s-1)}}({Q^i_j})}\\
		&\leqslant \bigg(\int_{B(A_ix_a, 2^{j+1}M\sqrt n R)} \big(\big|b(x)-\fint_B b\big|\omega(x)\big)^{\frac{qs}{s-q(s-1)}}\,dx\bigg)^{\frac{s-q(s-1)}{qs}}\\
		&\leqslant \bigg(\int_{A_i\big(B(x_a, 2^{j+1}MM'\sqrt n R)\big)} (\big|b(x)-\fint_B b\big|\omega(x))^{\frac{qs}{s-q(s-1)}}\,dx\bigg)^{\frac{s-q(s-1)}{qs}}\\
		&\leqslant \big|\fint_B b-\fint_{A_i\big(B(x_a, 2^{j+1}MM'\sqrt n R)\big)}b\big|\bigg[w^{\frac{qs}{s-q(s-1)}}(A_i\big(B(x_a, 2^{j+1}MM'\sqrt n R)\big))\bigg]^{\frac{s-q(s-1)}{qs}}\\
		&+\bigg(\int_{A_i\big(B(x_a, 2^{j+1}MM'\sqrt n R)\big)}\big|b(x)-\fint_{A_i\big(B(x_a, 2^{j+1}MM'\sqrt n R)\big)} b\big|^{\frac{qs}{s-q(s-1)}}\,dx\bigg)^{\frac{s-q(s-1)}{qs}}\\
		&\lesssim \bigg[w^{\frac{qs}{s-q(s-1)}}(A_i\big(B(x_a, 2^{j+1}MM'\sqrt n R)\big))\bigg]^{\frac{s-q(s-1)}{qs}}\\
		&\ \ \ \ \cdot\bigg(\big|\fint_B b-\fint_{A_i\big(B(x_a, 2^{j+1}MM'\sqrt n R)\big)} b\big|+\|b\|_{BMO(\R^n)}\bigg)\\
		&\lesssim (j+1)2^{jn\frac{s-q(s-1)}{qs}}\bigg(\|b\|_{BMO(\R^n)}+C(A_i,b)\bigg)\big[w^{\frac{qs}{s-q(s-1)}}(B)\big]^{\frac{s-q(s-1)}{qs}},
	\end{align*}
	we conclude that
	$$\bigg(\int_{({\cup_i B_i})^c}\big|(b(x)-\fint_B b) T_{\Omega,\alpha}^m a(x))\big|^q \omega^q(x) \,dx \bigg)^{\frac{1}{q}}\lesssim \|b\|_{BMO(\R^n)}+\sum\limits_{i=1}^m C(A_i,b),$$
	which completes the proof.
\end{proof}

\begin{remark}
	Let $p,\delta\in(0,1]$  with $1+\frac{\delta}{n}>\frac{1}{p}$. Recall the statement of \cite[Remark 1]{HK21}, then for any given invertible matrix $A$ and $\omega\in \mathcal{A}_{(1+\frac{\delta}{n})p}$ with $\int_{\R^n}\frac{\omega(x)}{(1+|x|)^{np}}\,dx<\infty$, $Lip_c^\alpha(\R^n)\subset BMO^A(\R^n)\cap BMO_{\omega,p}(\R^n)$ for any $\alpha\geqslant \delta$.
\end{remark}

\section{Boundedness of $[b, T_{\Omega,\alpha}^m]$ from $L(\Phi,\varphi)(\R^n)\  to \ L(\Psi,\varphi)(\R^n)$}

For the generalized fractional integral operator $I_\rho$ defined by
$$ I_\rho f(x)=\int_{\R^n} \frac{\rho(x,|x-y|)}{|x-y|^n}f(y)\,dy,
$$
where $\rho: \R^n\times (0,\infty)\to(0,\infty)$ is a function such that $\int_{0}^{1} \frac{\rho(x,t)}{t}\,dt<\infty$ for each $x\in \R^n$, Nakai and his collaborators obtained the boundedness results for the commutators of $I_\rho$ with the generlized $Companato$ function in \cite{SAN21, YN22}. They showed that  $[b,I_\rho]$ are bounded from $L(\Phi,\phi)(\R^n)$ to $L(\Psi,\phi)(\R^n)$ for any  $b\in \mathcal{L}_{1,\psi}(\R^n)$ and compact from $L(\Phi,\phi)(\R^n)$ to $L(\Psi,\phi)(\R^n)$ for any $b\in \overline{C_c^\infty(\R^n)}^{\|\cdot\|_{\mathcal{L}_{1,\psi}}}$ under corresponding restrictions on $\phi$, $\Phi$ and $\Psi$ . In this section, our proof will incorporate ideas and lemmas from  Nakai and Arai's works \cite{AN18, AN19, N93, N04}. Starting with establishing the boundedness of $T_{\Omega,\alpha}^m$ in Theorem \ref{T Bds}, we proceed to establish the boundedness of $[b,T_{\Omega,\alpha}^m]$ in Theorem \ref{bT Bts}.
\begin{lemma}\rm(\cite[Theorem 2.2]{RU14})\em\label{em M2}
	Let $\alpha\in [0,n)$, $ m\geqslant1$, $\{p_i,q_i,s\}^m_{i=1}$ be constants connected to $T_{\Omega,\alpha}^m$ as before and $\{A_i,\Omega_i\}$ satisfy $(\ref{A}) and (\ref{w}).$  For any $0<\delta\leqslant 1$ and $f$ such that $M^\sharp(T_{\Omega,\alpha}^m f)^\delta$ is well-defined,
	\begin{align}
		\bigg(M^\sharp |T_{\Omega,\alpha}^m f|^\delta(x)\bigg)^{\frac{1}{\delta}}\lesssim \sum\limits_{j=1}^m M_{\alpha,s}f(A_j^{-1}x).
	\end{align}
\end{lemma}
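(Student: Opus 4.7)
The plan is to establish this Fefferman--Stein type sharp-maximal estimate by the standard localize/globalize decomposition of $f$, the essential twist being that the localization must be adapted simultaneously to all $m$ centers $\{A_i^{-1}x_0\}$. Fix $x \in \R^n$ and any ball $B = B(x_0, r) \ni x$. Set $U := \bigcup_{i=1}^m B(A_i^{-1}x_0, Lr)$ for a constant $L$ large enough that $L \geqslant \max_i(\|A_i\|+\|A_i^{-1}\|)$, split $f = f_1 + f_2$ with $f_1 = f\chi_U$, and take $c := |T_{\Omega,\alpha}^m f_2(x_0)|^\delta$ (finite by the well-definedness hypothesis on $M^\sharp(T_{\Omega,\alpha}^m f)^\delta$). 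The inequality $||a|^\delta - |b|^\delta| \leqslant |a - b|^\delta$ for $0<\delta\leqslant 1$ splits the oscillation as
\[
\fint_B \bigl||T_{\Omega,\alpha}^m f|^\delta - c\bigr|\,dy \;\leqslant\; \fint_B |T_{\Omega,\alpha}^m f_1|^\delta + \fint_B |T_{\Omega,\alpha}^m f_2(y) - T_{\Omega,\alpha}^m f_2(x_0)|^\delta\,dy \;=:\; I + II.
\]
Taking the supremum over $B \ni x$ and then the $1/\delta$-th power reduces the lemma to the pointwise bounds $I^{1/\delta} + II^{1/\delta} \lesssim \sum_j M_{\alpha,s}f(A_j^{-1}x)$.

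For the local piece, invoke Kolmogorov's inequality together with the base unweighted boundedness $T_{\Omega,\alpha}^m : L^s(\R^n) \to L^{q_0,\infty}(\R^n)$, $1/q_0 = 1/s - \alpha/n$, which is classical in this setting (see \cite{GU99}). Since $\delta \leqslant 1 < q_0$, this gives
\[
I^{1/\delta} \;\lesssim\; |B|^{-1/q_0}\|f_1\|_{L^s(\R^n)} \;\leqslant\; \sum_{j=1}^m |B|^{-1/q_0}\left(\int_{B(A_j^{-1}x_0, Lr)}|f|^s\right)^{1/s} \;\sim\; \sum_j r^{\alpha}\left(\fint_{B(A_j^{-1}x_0,Lr)}|f|^s\right)^{1/s}.
\]
Because $A_j^{-1}x \in B(A_j^{-1}x_0, Lr)$ whenever $L \geqslant \|A_j^{-1}\|$, the $j$-th summand is absorbed into $M_{\alpha,s}f(A_j^{-1}x)$ up to a constant from the radius comparison.

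For the global piece $II$, Jensen's inequality ($\delta\leqslant 1$) reduces matters to the linear average $\fint_B |T_{\Omega,\alpha}^m f_2(y) - T_{\Omega,\alpha}^m f_2(x_0)|\,dy$. Decompose $U^c = \bigsqcup_{j,k} E_{j,k}$ by assigning each $z \in U^c$ to its nearest center $A_j^{-1}x_0$ and to the dyadic shell $2^{k-1}Lr \leqslant |z-A_j^{-1}x_0| < 2^k L r$, $k \geqslant 1$; on $E_{j,k}$ every $|x_0 - A_i z|$ is comparable to $2^k r$. Writing the kernel difference $K(y,z) - K(x_0,z)$ as a telescoping sum of the form $\sum_{i=1}^m \prod_{\ell<i} F_\ell(y,z)\cdot[F_i(y,z)-F_i(x_0,z)]\cdot\prod_{\ell>i} F_\ell(x_0,z)$, where $F_\ell(y,z) = \Omega_\ell(y-A_\ell z)/|y-A_\ell z|^{n/q_\ell}$, and applying Hölder's inequality with the conjugate exponents $(p_1,\dots,p_m,s)$ on each $E_{j,k}$, each summand is controlled by the product of one difference factor (yielding geometric gain $2^{-k} + \omega_{i,p_i}(C2^{-k})$ via a mean value estimate on $|\cdot|^{-n/q_i}$ together with the modulus-of-continuity bound on $\Omega_i$), $m-1$ uniform kernel factors (each of size $(2^k r)^{-n/q_\ell}$ after $L^{p_\ell}$-averaging over $E_{j,k}$), and the $L^s$ norm of $f$ on the shell. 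A dimension count shows the net $r$-power is exactly $r^\alpha$; summing in $k$ against the Dini condition (\ref{w}) produces the target $\sum_j M_{\alpha,s}f(A_j^{-1}x)$.

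The principal obstacle is the global estimate: orchestrating the telescoping decomposition of the product kernel and the Hölder exponents so that (i) exactly one $\Omega_i$ contributes its modulus-of-continuity factor per summand while the remaining $m-1$ factors are controlled uniformly on the annulus, (ii) the $L^s$-average of $f$ is taken over a ball containing $A_j^{-1}x$ so that it may be absorbed into the correct $M_{\alpha,s}f(A_j^{-1}x)$, and (iii) the cumulative $r$-scaling from $m+1$ different $L^{p_i}$ and $L^s$ normalizations collapses precisely to $r^\alpha$. In the reduced case $m=1$, $A_1=I$ one recovers the classical Kurtz--Wheeden sharp-maximal estimate for rough fractional integrals; the case $m>1$ additionally requires the careful geometric bookkeeping of assigning each far-field $z$ to its nearest singularity among $\{A_j^{-1}x_0\}_j$.
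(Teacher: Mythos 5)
The statement is quoted, not proved, in this paper: the lemma is cited verbatim as \cite[Theorem 2.2]{RU14}, so there is no in-paper argument to compare against. What I can assess is whether your reconstruction is sound and whether it matches the standard Riveros--Urciuolo approach; on both counts it essentially does. Your overall architecture --- localize around the $m$ singular points $\{A_i^{-1}x_0\}$, split $f$ accordingly, control the local piece by Kolmogorov and the $L^s\to L^{q_0,\infty}$ bound, telescope the product kernel for the far-field and apply H\"older with exponents $(p_1,\dots,p_m,s)$, absorb the $r$-scaling into $r^\alpha$ and sum the dyadic series against the Dini condition --- is exactly the right skeleton, and the dimensional bookkeeping $\sum_\ell n(1/p_\ell-1/q_\ell)+n/s=\alpha$ checks out.

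Two imprecisions are worth flagging, neither fatal. First, your claim that ``on $E_{j,k}$ every $|x_0-A_iz|$ is comparable to $2^kr$'' is false: the nearest-center decomposition only gives the \emph{lower} bound $|x_0-A_iz|\gtrsim 2^kr$ for all $i$, with no upper bound when $i\neq j$. What saves the estimate of $\|F_\ell(x_0,\cdot)\|_{L^{p_\ell}(E_{j,k})}$ for $\ell\neq j$ is the hypothesis $p_\ell>q_\ell$: enlarging the domain to $\{\,|z-A_\ell^{-1}x_0|\gtrsim 2^kr\,\}$ and substituting $u=x_0-A_\ell z$ gives a convergent tail
$\bigl(\int_{2^kr}^\infty \rho^{n-1-np_\ell/q_\ell}\,d\rho\bigr)^{1/p_\ell}\|\Omega_\ell\|_{L^{p_\ell}(S^{n-1})}\sim (2^kr)^{n/p_\ell-n/q_\ell}$,
which is the bound you need; comparability is not required, and indeed not available. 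Second, the weak-type endpoint $T_{\Omega,\alpha}^m:L^s\to L^{q_0,\infty}$ that you attribute to \cite{GU99} is not stated there as such --- GU99 gives the strong $L^p\to L^{np/(n-\alpha p)}$ bounds for $p>s$; the weak endpoint at $p=s$ is standard but should either be cited from RU14 directly (it is established there en route to Theorem~2.2) or derived by interpolation/duality. With those two points tidied up, the argument is a faithful reconstruction of the cited result.
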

\begin{theorem}\label{T Bds}
	Let $\alpha\in [0,n)$, $ m\geqslant1$, $\{p_i,q_i,s\}^m_{i=1}$ be constants connected to $T_{\Omega,\alpha}^m$ as before. Assume that $\{A_i,\Omega_i\}$ satisfy $(\ref{A}) and (\ref{w}),$ $\Phi,\Psi \in \Delta_2\cap\nabla_2$ with some $p\in p_{\nabla_2}(\Phi)$ bigger than $s$ and  $\phi\in \mathcal{G}^{dec}$ satisfies (\ref{phi}).
	Then $T_{\Omega,\alpha}^m$ is bounded from $L(\Phi,\phi)(\R^n)$ to $L(\Psi,\phi)(\R^n)$ if
	$R^{\alpha }\Phi^{-1}\big(\phi(R)\big) \lesssim \Psi^{-1}\big(\phi(R)\big)$ for any $R>0$
	and
	\begin{align}\label{C0}
		\int_R^\infty t^\alpha\Phi^{-1}(\phi(t))\frac{dt}{t}\to \infty\ \ as\ \  R\to \infty.
	\end{align}
\end{theorem}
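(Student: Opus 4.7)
The strategy is to pass through the sharp maximal characterization of the Orlicz--Morrey norm (Lemma \ref{em M1}) combined with the pointwise sharp maximal bound of Lemma \ref{em M2}, thereby reducing the theorem to an Orlicz--Morrey estimate for the fractional maximal operator $M_{\alpha,s}$. Writing $g := T_{\Omega,\alpha}^m f$, I would first verify the two preconditions of Lemma \ref{em M1} for $g$, namely local integrability (via the kernel size estimate and Lemma \ref{Pro}(3)) and the mean-vanishing property $\lim_{r\to\infty}\fint_{B(r)} g(x)\,\mathrm{d}x = 0$. Once these hold, Lemma \ref{em M1} gives $\|g\|_{L(\Psi,\phi)} \sim \|M^\sharp g\|_{\mathcal{L}(\Psi,\phi)}$, and a combination of Lemma \ref{Pro}(1)--(2) yields the general domination $\|\cdot\|_{\mathcal{L}(\Psi,\phi)} \lesssim \|\cdot\|_{L(\Psi,\phi)}$, so it suffices to prove $\|M^\sharp g\|_{L(\Psi,\phi)} \lesssim \|f\|_{L(\Phi,\phi)}$.

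Lemma \ref{em M2} (with $\delta = 1$) supplies the pointwise bound $M^\sharp g(x) \lesssim \sum_{j=1}^m M_{\alpha,s}f(A_j^{-1}x)$, and since each $A_j$ is invertible, a linear change-of-variables argument (exploiting the bi-Lipschitz comparability of balls and their images under $A_j$) reduces the problem to $\|M_{\alpha,s}f\|_{L(\Psi,\phi)} \lesssim \|f\|_{L(\Phi,\phi)}$. To prove this, fix a ball $B_0 = B(x_0, R_0)$ and split $f = f\chi_{2B_0} + f\chi_{(2B_0)^c}$. For the far piece, any ball $B \ni x$ with $x \in B_0$ meeting $(2B_0)^c$ has radius $R \gtrsim R_0$; picking $p \in p_{\nabla_2}(\Phi)$ with $p > s$, Lemma \ref{Pro}(3) gives
$$\Big(\fint_B |f|^s\Big)^{1/s} \leqslant \Big(\fint_B |f|^p\Big)^{1/p} \lesssim \Phi^{-1}(\phi(R))\|f\|_{L(\Phi,\phi)},$$
and the hypothesis $R^\alpha \Phi^{-1}(\phi(R)) \lesssim \Psi^{-1}(\phi(R))$ together with the monotonicity of $\Psi^{-1}\circ\phi$ (coming from $\phi \in \mathcal{G}^{dec}$) produces the pointwise bound $M_{\alpha,s}(f\chi_{(2B_0)^c})(x) \lesssim \Psi^{-1}(\phi(R_0))\|f\|_{L(\Phi,\phi)}$ on $B_0$, which Lemma \ref{Pro}(1) converts into the desired $\|\cdot\|_{\Psi,\phi,B_0}$ bound. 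For the local piece, the classical $L^p$--$L^q$ boundedness of $M_{\alpha,s}$ (with $1/q = 1/p - \alpha/n$) applied to $f\chi_{2B_0}$, combined with Lemma \ref{Pro}(3) on the ball $2B_0$, gives a matching $L^q$ estimate which the hypothesis relating $\Phi$ and $\Psi$ then converts into the $\|\cdot\|_{\Psi,\phi,B_0}$ bound.

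The main obstacle I expect is establishing the mean-vanishing condition $\lim_{r\to\infty}\fint_{B(r)} T_{\Omega,\alpha}^m f\,\mathrm{d}x = 0$ for arbitrary $f \in L(\Phi,\phi)(\R^n)$, where the role of hypothesis (\ref{C0}) becomes crucial. The intended approach is to dyadically decompose the defining integral for $T_{\Omega,\alpha}^m f(x)$ into annular pieces with respect to the origin, estimate each piece via H\"older's inequality with exponents $p_1,\dots,p_m,s$ together with Lemma \ref{Pro}(3), and match the resulting dyadic sum against the tail behavior of $t^\alpha\Phi^{-1}(\phi(t))/t$ encoded in (\ref{C0}) to force the average to vanish. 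A secondary but nontrivial technical point is the quantitative change-of-variables step reducing $\|M_{\alpha,s}f(A_j^{-1}\cdot)\|_{L(\Psi,\phi)}$ to $\|M_{\alpha,s}f\|_{L(\Psi,\phi)}$, which requires carefully tracking how the Orlicz--Morrey norm transforms under invertible linear maps and absorbing the distortion constants into the implicit bounds.
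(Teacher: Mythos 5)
Your overall strategy matches the paper's: use Lemma~\ref{em M1} (after verifying the mean-vanishing condition), combine with the pointwise sharp maximal bound of Lemma~\ref{em M2}, change variables to remove the matrices $A_j$, and reduce to an Orlicz--Morrey bound for $M_{\alpha,s}$; the compact-support bootstrap and dyadic tail estimate governed by~(\ref{C0}) for the general case is also exactly what the paper does. The one genuine divergence is in how you prove $\|M_{\alpha,s}f\|_{L(\Psi,\phi)}\lesssim\|f\|_{L(\Phi,\phi)}$. The paper does not decompose into local and far pieces at all: it writes $M_{\alpha,s}f = \bigl(M_{(\cdot)^{\alpha s}}|f|^s\bigr)^{1/s}$, passes to the modified Young functions $\Phi(\cdot^{1/s})$, $\Psi(\cdot^{1/s})$, and then applies Lemma~\ref{M Bds} directly to $M_\psi$ with $\psi(R)=R^{\alpha s}$, observing that $R^{\alpha s}\bigl(\Phi^{-1}(\phi(R))\bigr)^s\lesssim\bigl(\Psi^{-1}(\phi(R))\bigr)^s$ is equivalent to the theorem's hypothesis. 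This is cleaner and avoids reproving the maximal bound from scratch.

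Your hand-built substitute for Lemma~\ref{M Bds} has a real gap in the local piece. You state that the classical $L^p$--$L^q$ boundedness of $M_{\alpha,s}$ applied to $f\chi_{2B_0}$, together with Lemma~\ref{Pro}(3) on $2B_0$, "gives a matching $L^q$ estimate which the hypothesis relating $\Phi$ and $\Psi$ then converts into the $\|\cdot\|_{\Psi,\phi,B_0}$ bound." But Lemma~\ref{Pro}(3) goes in the \emph{wrong direction}: it bounds the $L^p$ average by the Luxemburg norm, not the Luxemburg norm by an $L^q$ average. To turn an $L^q$ average over $B_0$ into a bound on $\|g\|_{\Psi,\phi,B_0}$ one needs a Jensen-type inequality, which requires that $\Psi(t)/t^q$ be (essentially) non-increasing; by Remark~\ref{R2}(2) this holds for \emph{some} exponent coming from the $\nabla_2$ condition on $\Psi$, but there is nothing in the hypotheses forcing the exponent $q=np/(n-\alpha p)$ associated with $p\in p_{\nabla_2}(\Phi)$ to coincide with or exceed that exponent for $\Psi$. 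Without that index comparison the conversion step fails as written. The paper sidesteps this entirely because Lemma~\ref{M Bds} already delivers the Orlicz--Morrey bound as a black box under the single pointwise hypothesis on $\Phi^{-1}$, $\Psi^{-1}$ and $\phi$, which is precisely the condition you have assumed. The rest of your outline is sound, although you should note that~(\ref{C0}) must be read with a tail that \emph{vanishes} (as it actually is used in~(\ref{* 2}) and as~(\ref{C3}) is phrased), rather than diverges.
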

\begin{proof}
	Let us first consider $f\in L(\Phi,\phi)(\R^n)$ with compact support.  From  Lemma \ref{Pro} (3) we know that $L(\Phi,\phi)(\R^n)\subset L_{loc}^p(\R^n)$, thus $f\in L^p(\R^n)$ and then $T_{\Omega,\alpha}^m f \in L^{\frac{np}{n-\alpha p}}(\R^n)$ by  the $L^p(\R^n)-L^{\frac{np}{n-\alpha p}}(\R^n)$  boundedness of $T_{\Omega,\alpha}^m$ in \cite{GU99}. Hence $\lim\limits_{r\to\infty}\fint_{B(r)} |T_{\Omega,\alpha}^m f(x)|\,dx=0$ and this guarantees that we can use Lemma \ref{em M1} and Lemma \ref{em M2}.
	
	Note that there exists positive constants $C_1$ and $C_2$ such that
	\begin{align*}
		&\inf\bigg\{\lambda: \frac{1}{\phi(r)}\int_{B(x,r)} \Psi\bigg(\frac{M_{\alpha,s}f(A_j^{-1}y)}{\lambda}\bigg)\,dy\leqslant 1 \bigg\}\\
		&\leqslant C_1 \inf\bigg\{\lambda: \frac{1}{\phi(C_2r)}\int_{B(A_jx,C_2r)} \Psi\bigg(\frac{M_{\alpha,s}f(y)}{\lambda}\bigg)\,dy\leqslant 1 \bigg\}
	\end{align*}
	for any ball $B(x,r)\subset \R^n$ and this together with the results in Section 2.4 shows that
	\begin{align*}
	    \|T_{\Omega,\alpha}^m f\|_{L(\Psi,\phi)}
	    &\lesssim \|M^{\sharp}(T_{\Omega,\alpha}^m f)\|_{L(\Psi,\phi)}\\
		&=\sup\limits_{B(x,r)} \inf\bigg\{\lambda: \frac{1}{\phi(r)}\int_{B(x,r)} \Psi\bigg(\frac{M^{\sharp}(T_{\Omega,\alpha}^m f)(y)}{\lambda}\bigg)\,dy\leqslant 1 \bigg\}\\
		&\leqslant C \sup\limits_{B(x,r)}\inf\bigg\{\frac{\lambda}{C}: \frac{1}{\phi(r)}\int_{B(x,r)} \Psi\bigg(\frac{C \sum\limits_{j=1}^m M_{\alpha,s}f(A_j^{-1}y)}{\lambda}\bigg)\,dy\leqslant 1 \bigg\}
        \end{align*}
	    \begin{align*}
		&\lesssim \sum\limits_{j=1}^m \sup\limits_{B(x,r)}\inf\bigg\{\lambda: \frac{1}{\phi(r)}\int_{B(x,r)} \Psi\bigg(\frac{M_{\alpha,s}f(A_j^{-1}y)}{\lambda}\bigg)\,dy\leqslant 1 \bigg\}\\
		&\lesssim \sum\limits_{j=1}^m \sup\limits_{B(x,r)}\inf\bigg\{\lambda: \frac{1}{\phi(r)}\int_{B(x,r)} \Psi\bigg(\frac{M_{\alpha,s}f(y)}{\lambda}\bigg)\,dy\leqslant 1 \bigg\}\\
		&\lesssim \|M_{\alpha,s}f\|_{L(\Psi,\phi)}=\|M_{\alpha s}|f|^s\|_{L(\Psi(\cdot^\frac{1}{s}),\phi)}^{\frac{1}{s}}\\
		&\lesssim \||f|^s\|_{L(\Phi(\cdot^\frac{1}{s}),\phi)}^{\frac{1}{s}}=\|f\|_{L(\Phi,\phi)}.
	\end{align*}
	
	For the general case $f\in L(\Phi,\phi)(\R^n)$, choose a positive constant $M$ such that $M>2\max\limits_i\{\|A_i\|,\|A_i^{-1}\|^{-1}\}$. From Lemma \ref{Pro} (2) and what has already been proved, we obtain
	\begin{align}\label{* 1}
		\fint_{B(r)} |T_{\Omega,\alpha}^m (f\chi_{B(Mr)})(y)|\,dy
		&\nonumber\leqslant 2\Psi^{-1}(\phi(r))\|T_{\Omega,\alpha}^m (f\chi_{B(Mr)})\|_{L(\Psi,\phi)}\\
		&\lesssim \Psi^{-1}(\phi(r))\|f\|_{L(\Phi,\phi)}
	\end{align}
	for any $r>0$. Besides, the definition of $T_{\Omega,\alpha}^m$ and the assumptions stated above yield
	\begin{align}\label{***}
		&\nonumber \big|T_{\Omega,\alpha}^m (f\chi_{(B(Mr))^c})(x)\big|=\int_{(B(Mr))^c}\frac{\Omega_1(x-A_1 y)}{|x-A_1 y |^{\frac{n}{q_1}}} \cdots \frac{\Omega_m(x-A_m y)}{|x-A_m y |^{\frac{n}{q_m}}} f(y)dy\\
		&\nonumber\leqslant \sum\limits_{j=j_M}^\infty \int_{B(2^{j+1} r)\backslash B(2^j r)}\frac{\Omega_1(x-A_1 y)}{|x-A_1 y |^{\frac{n}{q_1}}} \cdots \frac{\Omega_m(x-A_m y)}{|x-A_m y |^{\frac{n}{q_m}}} f(y)dy\\
		&\nonumber\leqslant \sum\limits_{j=j_M}^\infty (2^j r)^{\alpha-n} \prod_{i=1}^m\bigg(\int_{|x-A_i y|\sim2^j r} |\Omega_i(x-A_iy)|^{p_i}\,dy\bigg)^{\frac{1}{p_i}} \bigg(\int_{B(2^{j+1} r)} |f(y)|^s\,dy\bigg)^{\frac{1}{s}}\\
		&\lesssim\sum\limits_{j=j_M}^\infty (2^j r)^{\alpha-n} \prod_{i=1}^m(2^j r)^{\frac{n}{p_i}} \bigg(\int_{B(2^{j+1} r)} |f(y)|^s\,dy\bigg)^{\frac{1}{s}}
	\end{align}
	for any $r>0$ and $x\in B(r)$,  where $j_M$ is the integer such that $2^{j_M}\leqslant M< 2^{j_M+1}$. Hence by Lemma \ref{Pro} (3) we have
	\begin{align}\label{* 2}
		\nonumber\fint_{B(r)} |T_{\Omega,\alpha}^m (f\chi_{(B(Mr))^c})(x)|\,dx
		&\lesssim \sum\limits_{j=j_M}^\infty (2^j r)^\alpha \bigg(\fint_{B(2^j r)} |f(y)|^p\,dy\bigg)^{\frac{1}{p}}\\
		&\lesssim \sum\limits_{j=j_M}^\infty (2^j r)^\alpha \Phi^{-1}\big(\phi(2^j r)\big)\|f\|_{L(\Phi,\phi)}.
	\end{align}
	
	After combining (\ref{C0}), (\ref{* 1}) and (\ref{* 2}), it may be concluded that  $$T_{\Omega,\alpha}^m f \in L_{loc}^1(\R^n)\ \ and\ \ \lim\limits_{r\to\infty}\fint_{B(r)} |T_{\Omega,\alpha}^m f(x)|dx=0.$$
	Then repeat the argument in the first case and we obtain
	\begin{align}\label{result}
		\|T_{\Omega,\alpha}^m f \|_{L(\Psi,\phi)} \lesssim \|f\|_{L(\Phi,\phi)}.
	\end{align}
\end{proof}
\begin{definition}
	$A$ is a invertible matrix. A function $f\in \mathcal{L}_{1,\psi}(\R^n)$ is said to lie in $\mathcal{L}^{A}_{1,\psi}(\R^n)$ if and only if
	there exists $C>0$ such that $$\frac{1}{\psi(r)}\bigg|\fint_{A\big(B(x,r)\big)}b-\fint_{B(x,r)}b\bigg|<C$$
	for any ball $B=B(x,r)\subset \R^n$. If $\psi=1$, then $\mathcal{L}^{A}_{1,\psi}(\R^n)=BMO^A(\R^n)$. If $A=I$, then $\mathcal{L}^{A}_{1,\psi}(\R^n)=\mathcal{L}_{1,\psi}(\R^n)$.
\end{definition}
\begin{theorem}\label{bT Bts}
	Let $\alpha\in [0,n)$,  $m\geqslant1$, $\{p_i,q_i,s\}^m_{i=1}$ be constants connected to $T_{\Omega,\alpha}^m$ as before and suppose that $\{A_i,\Omega_i\}$ satisfy $(\ref{A}) and (\ref{w}). $
	We make the extra assumptions as follows:\\
	(1) $\Phi,\Psi\in \Delta_2\cap\nabla_2$ with some $p>s$ in $p_{\nabla_2}(\Phi)$, $\phi\in \mathcal{G}^{dec}$ satisfies $(\ref{phi})$ and $\psi\in \mathcal{G}^{inc}$;\\
	(2) There exists $\Theta\in\nabla_2$ such that for any $R>0$,
	\begin{align}\label{C2}
		\nonumber R^{\alpha }\Phi^{-1}\big(\phi(R)\big) \lesssim\Theta^{-1}\big(\phi(R)\big),\\
		\nonumber \psi(R)\Theta^{-1}(\phi(R))\lesssim \Psi^{-1}(\phi(R)),\\
		R^\alpha\psi(R)\Phi^{-1}(\phi(R))\lesssim \Psi^{-1}(\phi(R));
	\end{align}
	(3) As $R\to \infty$,
	\begin{align}\label{C3}
		\int_R^\infty t^\alpha\Phi^{-1}(\phi(t))\psi(t)\log t\frac{dt}{t}\to 0 \ \ and \ \ \psi(R)\log R\cdot R^{\alpha-\frac{n}{s}}\to 0;
	\end{align}
	(4) For any $i\in\{1,2,\cdots,m\}$, there exists $\varepsilon_i>0$ such that $\int_0^1 \frac{\omega_{i,p_i}(t)}{t^{1+\varepsilon_i}}dt<\infty.$\\
	
	Then $[b,T_{\Omega,\alpha}^m]$ is well-defined for any function $f\in L(\Phi,\phi)(\R^n)$ and  $b\in\cap_{i=1}^m\mathcal{L}^{A_i}_{1,\psi}(\R^n)$. Moreover, $[b,T_{\Omega,\alpha}^m]$ is bounded from $L(\Phi,\phi)(\R^n)$ to $L(\Psi,\phi)(\R^n).$
\end{theorem}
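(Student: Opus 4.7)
The plan is to fix a ball $B=B(x_0,r)$, choose $M>2\max_i\{\|A_i\|,\|A_i^{-1}\|^{-1}\}$, and decompose $f=f_1+f_2$ with $f_1=f\chi_{B(x_0,Mr)}$. Writing $b_B=\fint_B b$, for $x\in B$,
$$[b,T_{\Omega,\alpha}^m]f(x)=(b(x)-b_B)T_{\Omega,\alpha}^m f(x)-T_{\Omega,\alpha}^m\bigl((b-b_B)f_1\bigr)(x)-T_{\Omega,\alpha}^m\bigl((b-b_B)f_2\bigr)(x),$$
which I label $\mathrm{I}(x)+\mathrm{II}(x)+\mathrm{III}(x)$. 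The goal is to bound each of $\|\mathrm{I}\|_{\Psi,\phi,B}$, $\|\mathrm{II}\|_{\Psi,\phi,B}$ and $\|\mathrm{III}\|_{\Psi,\phi,B}$ by $\|f\|_{L(\Phi,\phi)}$ times a constant independent of $B$, then take the supremum over $B$. The pointwise bounds that drive the boundedness will simultaneously give a.e.\ absolute convergence of the defining integral, so the well-definedness claim drops out for free.

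For $\mathrm{I}$, I would use a generalised H\"older inequality in the Orlicz-Morrey scale: the equivalence $\mathcal{L}_{1,\psi}=\mathcal{L}_{p,\psi}$ with $p\in p_{\nabla_2}(\Phi)$ furnishes $\|b-b_B\|_{L^p(B)}\lesssim\psi(r)r^{n/p}\|b\|_{\mathcal{L}_{1,\psi}}$, Theorem \ref{T Bds} bounds $T_{\Omega,\alpha}^m f$ in $L(\Psi,\phi)$, and the third inequality of (\ref{C2}) absorbs the resulting product of $\psi(r)$, $r^\alpha$ and $\Phi^{-1}(\phi(r))$ into $\Psi^{-1}(\phi(r))$. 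For $\mathrm{II}$, bring in the auxiliary Young function $\Theta$ from (\ref{C2}): by its first inequality Theorem \ref{T Bds} applies with target $\Theta$ in place of $\Psi$, giving $\|\mathrm{II}\|_{L(\Theta,\phi)}\lesssim\|(b-b_B)f_1\|_{L(\Phi,\phi)}$; another H\"older combined with $\psi\in\mathcal{G}^{inc}$ and the support of $f_1$ bounds the right-hand side by $\psi(Mr)\|b\|_{\mathcal{L}_{1,\psi}}\|f\|_{L(\Phi,\phi)}$, and the second inequality of (\ref{C2}) then converts the $L(\Theta,\phi)$ estimate to the required $L(\Psi,\phi)$ estimate.

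For $\mathrm{III}$ the matrices enter essentially. Decompose the complement of $B(x_0,Mr)$ into dyadic shells $S_j=B(x_0,2^{j+1}Mr)\setminus B(x_0,2^jMr)$ and, just as in (\ref{***}) of Theorem \ref{T Bds}, apply H\"older with exponents $(p_1,\ldots,p_m,s)$ on each shell to get the kernel bound $\prod_i(2^jr)^{-n/q_i+n/p_i}$. The extra factor $|b(y)-b_B|$ is controlled by Campanato telescoping along the chain $B\subset\cdots\subset B(x_0,2^{j+1}Mr)$, which produces $(j+1)\psi(2^jr)\|b\|_{\mathcal{L}_{1,\psi}}$; however, the kernel shells align naturally with $B(A_ix_0,2^jr)$ rather than $B(x_0,2^jr)$, and this is exactly where $b\in\mathcal{L}^{A_i}_{1,\psi}$ is used to bridge averages across the two systems of balls. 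Putting Lemma \ref{Pro}(3) on $f$ and summing produces a series majorised by $\sum_j\psi(2^jr)(j+1)(2^jr)^\alpha\Phi^{-1}(\phi(2^jr))\|f\|_{L(\Phi,\phi)}$, which converges (and tends to $0$ as $r\to\infty$) by the first limit in (\ref{C3}); the smoothness assumption (4) on $\Omega_i$ together with the second limit in (\ref{C3}) absorb the residual error coming from swapping $K(x,y)$ for $K(x_0,y)$, exactly along the lines used in the proof of Theorem \ref{RESULT 1}.

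The main obstacle is reconciling the two natural centres present in $\mathrm{III}$: the kernel has singularities along $y=A_i^{-1}x$, so the shell decomposition dictated by kernel decay lives most naturally in the variable $A_iy$ (balls centred at $A_ix_0$), while the Campanato and Orlicz-Morrey machinery for $b$ and $f$ is tied to balls centred at $x_0$. The hypothesis $b\in\cap_i\mathcal{L}^{A_i}_{1,\psi}$ is precisely the bridge between these two scales, so the most delicate part of the argument will be the careful bookkeeping between the two families of balls, choosing $M$ large enough that the dyadic shells remain well-separated after each $A_i$-translation, and checking at every step that the Young-function manipulations stay within the budget provided by (\ref{C2}) and (\ref{C3}).
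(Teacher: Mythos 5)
Your strategy differs from the paper's in an essential way, and the difference is not just stylistic. You propose a direct, ball-by-ball estimate: fix $B=B(x_0,r)$, split $[b,T_{\Omega,\alpha}^m]f=\mathrm{I}+\mathrm{II}+\mathrm{III}$, bound each local Luxemburg norm $\|\cdot\|_{\Psi,\phi,B}$, and take the supremum. The paper instead proves a pointwise bound on the sharp maximal function $M^\sharp\bigl([b,T_{\Omega,\alpha}^m]f\bigr)(x)$ by a sum of $M_{\psi^\eta}|T_{\Omega,\alpha}^m f|^\eta$ and $M_{((\cdot)^\alpha\psi(\cdot))^\eta}|f\circ A_i^{-1}|^\eta$, then invokes Lemma~\ref{em M1} together with the limit condition $\lim_{r\to\infty}\fint_{B(r)}|[b,T_{\Omega,\alpha}^m]f|=0$. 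The two strategies are not interchangeable here, and the direct one runs into a genuine obstruction.

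The problem sits in $\mathrm{III}$. You correctly see that to exploit the H\"ormander-type kernel regularity and the $\mathcal{L}^{A_i}_{1,\psi}$ hypothesis, one must swap $K(x,y)$ for $K(x_0,y)$, i.e.\ subtract the constant $c_B:=T_{\Omega,\alpha}^m\bigl((b-b_B)f\chi_{(MB)^c}\bigr)(x_0)$. In the $M^\sharp$ framework this subtraction is free of charge, since $\fint_B|g-\fint_B g|$ is blind to additive constants. But the Orlicz--Morrey norm $\|\cdot\|_{\Psi,\phi,B}$ is a genuine norm, so $\|\mathrm{III}\|_{\Psi,\phi,B}\geq\|\mathrm{III}\|_{L^1(B)}/(2|B|\Psi^{-1}(\phi(r)))$ by Lemma~\ref{Pro}(2), and the constant $c_B$ contributes $|c_B|/\Psi^{-1}(\phi(r))$ to the target estimate. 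Your majorant for $|c_B|$ is the series $\sum_j(j+1)\psi(2^jr)(2^jr)^\alpha\Phi^{-1}(\phi(2^jr))\|f\|_{L(\Phi,\phi)}$, and for your plan to close you would need this to be $\lesssim\Psi^{-1}(\phi(r))\|f\|_{L(\Phi,\phi)}$ uniformly in $r>0$, i.e.\ $\int_r^\infty t^\alpha\Phi^{-1}(\phi(t))\psi(t)\log t\,\tfrac{dt}{t}\lesssim\Psi^{-1}(\phi(r))$. But the hypothesis (\ref{C3}) only guarantees this quantity tends to $0$ as $r\to\infty$; it gives no pointwise comparison with $\Psi^{-1}(\phi(r))$, and (\ref{C2}) only controls the first term of the series, not the tail. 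So the direct bound on $\|\mathrm{III}\|_{\Psi,\phi,B}$ is not implied by the stated hypotheses. That the paper never attempts it, but instead spends several estimates verifying $\lim_{r\to\infty}\fint_{B(r)}|[b,T_{\Omega,\alpha}^m]f|=0$ precisely so that Lemma~\ref{em M1} applies, is telling: (\ref{C3}) is calibrated exactly for the limit condition, not for a pointwise Morrey-norm bound on the far field.

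There are secondary soft spots as well. For $\mathrm{I}$ you assert that Theorem~\ref{T Bds} bounds $T_{\Omega,\alpha}^m$ into $L(\Psi,\phi)$, but (\ref{C2}) only furnishes the chain $R^\alpha\Phi^{-1}(\phi(R))\lesssim\Theta^{-1}(\phi(R))$, so Theorem~\ref{T Bds} lands in $L(\Theta,\phi)$; the passage from $\Theta$ to $\Psi$ costs exactly the factor $\psi$, and the paper pays for it via $M_{\psi^\eta}$ rather than via (\ref{C2}) alone. For $\mathrm{II}$ the inequality $\|(b-b_B)f\chi_{MB}\|_{L(\Phi,\phi)}\lesssim\psi(Mr)\|b\|_{\mathcal{L}_{1,\psi}}\|f\|_{L(\Phi,\phi)}$ would have to hold with the supremum taken over \emph{all} balls $B'$, including $B'$ of radius $\ll r$ inside $MB$, where telescoping the Campanato average from $B$ to $B'$ introduces a $\log(r/\rho_{B'})$ factor that your one-line H\"older estimate does not account for. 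Both of these are sidestepped by the paper's pointwise maximal-function bounds, which are local in nature and never see the global $L(\Phi,\phi)$ norm of a product. The decomposition you write down is essentially the right one at the integrand level, but the control on each piece must be done at the level of $M^\sharp$, not at the level of $\|\cdot\|_{\Psi,\phi,B}$, for the argument to close under (\ref{C2})--(\ref{C3}).
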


\begin{proof}
	Our proof starts with the boundedness of some related operators. By \cite[Lemma 4.4]{SAN19} we can choose a constant $\eta>1$ such that $\Phi(\cdot^{\frac{1}{\eta}})$, $\Theta(\cdot^{\frac{1}{\eta}})$ and $\Psi(\cdot^{\frac{1}{\eta}})$ are Young functions satisfying $\nabla_2$-condition. From  Lemma \ref{M Bds}, Theorem \ref{T Bds} and (\ref{C2}), we see that
	$T_{\Omega,\alpha}^m$ is bounded from $L(\Phi,\phi)(\R^n)$ to $L(\Theta,\phi)(\R^n)$, $M_{\psi^\eta}$ is bounded from $L\big(\Theta(\cdot^{\frac{1}{\eta}}),\phi\big)(\R^n)$ to $L\big(\Psi(\cdot^{\frac{1}{\eta}}),\phi\big)(\R^n)$ and $M_{\big((\cdot)^\alpha \psi(\cdot)\big)^\eta}$ is bounded from $L\big(\Phi(\cdot^{\frac{1}{\eta}}),\phi\big)(\R^n)$ to $L\big(\Psi(\cdot^{\frac{1}{\eta}}),\phi\big)(\R^n)$. \\
	
	Claim: For any $x\in \R^n$,  $b\in\cap_{i=1}^m\mathcal{L}^{A_i}_{1,\psi}(\R^n)$ and $f\in L(\Phi,\phi)$,
	\begin{align*}
		M^\sharp\big([b,T_{\Omega,\alpha}^m]f\big)(x)
		&\lesssim \sum\limits_{i=1}^m \big(C_i(b)+\|b\|_{\mathcal{L}_{1,\psi}}\big)\bigg[\bigg(M_{\psi^\eta}\big|T_{\Omega,\alpha}^m f\big|^\eta(x)\bigg)^{\frac{1}{\eta}}\\
		&\ \ \ \ +\bigg(M_{\big((\cdot)^\alpha \psi(\cdot)\big)^\eta}|f\circ A_i^{-1}|^\eta(x)\bigg)^{\frac{1}{\eta}}
		\bigg],
	\end{align*}
	where for any $i\in \{1,2,\cdots, m\}$, $C_i(b)$ denotes one of the constants satisfying $$\sup\limits_{B=B(x,r)}\frac{1}{\psi(r))}|\fint_{B(A_ix,r)}b-\fint_{B(x,r)}b|\leqslant C_i(b).$$
	proof of Claim: Given $x\in \R^n$, set  $C(x)=T_{\Omega,\alpha}^m\big((b-\fint_{MB}b)f\chi_{(MB)^c}\big)(x)$ and $MB=B(x,Mr)$ with $M>2\max\limits_i\{\|A_i\|,\|A_i^{-1}\|^{-1}\}$. Then
	\begin{align*}
		M^\sharp\big([b,T_{\Omega,\alpha}^m]f\big)(x)
		&=\sup\limits_{B=B(x,r)} \fint_{B} \big|[b,T_{\Omega,\alpha}^m]f(y)-\fint_B [b,T_{\Omega,\alpha}^m]f(z)dz\big|dy\\
		&=\sup\limits_{B=B(x,r)} \fint_{B} \bigg|[b,T_{\Omega,\alpha}^m]f(y)-C(x)-\fint_B \bigg([b,T_{\Omega,\alpha}^m]f(z)-C(x)\bigg)dz\bigg|dy\\
		&\leqslant 2\sup\limits_{B=B(x,r)} \fint_{B} \big|[b,T_{\Omega,\alpha}^m]f(y)-C(x)\big|dy\\
		&=2\sup\limits_{B=B(x,r)} \fint_{B} \big|[b-\fint_{MB} b,T_{\Omega,\alpha}^m]f(y)-C(x)\big|dy\\
		&\lesssim \sup\limits_{B=B(x,r)}\fint_{B}\big|\big(b(y)-\fint_{MB} b\big)T_{\Omega,\alpha}^m f(y)\big|dy\\
		&+\sup\limits_{B=B(x,r)}\fint_{B}\big|T_{\Omega,\alpha}^m \big(b-\fint_{MB} b\big)(f\chi_{MB})(y)\big|dy\\
		&+\sup\limits_{B=B(x,r)}\fint_{B}\big|T_{\Omega,\alpha}^m \big(b-\fint_{MB} b\big)(f\chi_{{(MB)}^c})(y)-C(x)\big|dy.
	\end{align*}
	Now we are reduced to  deal with the last three items above respectively.
	
	According to \cite[Corollary 4.3]{AN18}, the first item is controlled by
	\begin{align*}
		&\sup\limits_{B=B(x,r)}\bigg(\fint_B\big|b(y)-\fint_{MB} b\big|^{\eta'}dy\bigg)^{\frac{1}{\eta'}} \bigg(\fint_B\big|T_{\Omega,\alpha}^m(y)dy\big|^{\eta}\bigg)^{\frac{1}{\eta}}\\
		&\leqslant \sup\limits_{B=B(x,r)}\bigg(\frac{|MB|}{|B|[\psi(B)]^{\eta'}}\fint_{MB}\big|b(y)-\fint_{MB} b\big|^{\eta'}dy\bigg)^{\frac{1}{\eta'}} \bigg([\psi(B)]^{\eta}\fint_B\big|T_{\Omega,\alpha}^m(y)dy\big|^{\eta}\bigg)^{\frac{1}{\eta}}\\
		&\lesssim \|b\|_{\mathcal{L}_{1,\psi}}\bigg(M_{\psi^\eta}\big|T_{\Omega,\alpha}^m f\big|^\eta(x)\bigg)^{\frac{1}{\eta}}.
	\end{align*}
	
	For the second item, notice that $(b-\fint_{MB} b)f\chi_{MB}\in L^{\tilde p}(\R^n)$ for some $\tilde p>1$. Hence
	$T_{\Omega,\alpha}^m \big(b-\fint_{MB} b\big)(f\chi_{MB})\in L^{\tilde q}(\R^n)$ when $\frac{1}{\tilde q}=\frac{1}{\tilde p}-\frac{\alpha}{n}$ and consequently
	\begin{align*}
		&\sup\limits_{B=B(x,r)}\fint_{B}\big|T_{\Omega,\alpha}^m \big(b-\fint_{MB} b\big)(f\chi_{MB})(y)\big|dy\\
		&\leqslant\sup\limits_{B=B(x,r)}\bigg(\fint_{B}\big|T_{\Omega,\alpha}^m \big(b-\fint_{MB} b\big)(f\chi_{MB})(y)\big|^{\tilde q}dy\bigg)^{\frac{1}{\tilde q}}
		\end{align*}
		\begin{align*}
		&\leqslant \sup\limits_{B=B(x,r)}\frac{1}{|B|^{{\frac{1}{\tilde q}}}}\big\| T_{\Omega,\alpha}^m \big(b-\fint_{MB} b\big)(f\chi_{MB})\big\|_{L^{\tilde q}}\\
		&\lesssim \sup\limits_{B=B(x,r)}\frac{1}{|B|^{{\frac{1}{\tilde q}}}}\big\| \big(b-\fint_{MB} b\big)(f\chi_{MB})\big\|_{L^{\tilde p}}\\
		&=\sup\limits_{B=B(x,r)}|B|^{\frac{\alpha}{n}}\cdot\frac{1}{|B|^\frac{1}{\tilde p}}\big\| \big(b-\fint_{MB} b\big)(f\chi_{MB})\big\|_{L^{\tilde p}}\\
		&\lesssim \sup\limits_{B=B(x,r)} r^\alpha \bigg(\fint_{MB} \big|(b-\fint_{MB} b)f(y)\big|^{\tilde p}\bigg)^{\frac{1}{\tilde p}}\\
		&\leqslant \sup\limits_{B=B(x,r)} r^\alpha \cdot \bigg(\frac{1}{{\psi(r)}^{\eta_1}}\fint_{MB}|b(y)-\fint_{MB} b|^{\eta_1}dy\bigg)^{\frac{1}{\eta_1}}\bigg({\psi(r)}^{\eta}\fint_{MB}|f(y)|^{\eta}dy\bigg)^{\frac{1}{\eta}}\\
		&\lesssim \|b\|_{\mathcal{L}_{1,\psi}}\bigg(M_{\big((\cdot)^\alpha \psi(\cdot)\big)^\eta}|f|^\eta(x)\bigg)^{\frac{1}{\eta}}
	\end{align*}
	where $\frac{1}{\tilde p}=\frac{1}{\eta}+\frac{1}{\eta_1}$. Indeed, the third item is
	\begin{align*}
		&\sup\limits_{B}
		\fint_{B}\bigg|
		\int_{{(MB)}^c} (b(z)-\fint_{MB} b)K(y,z)f(z)dz-\int_{{(MB)}^c} (b(z)-\fint_{MB} b)K(x,z)f(z)dz
		\bigg|dy \\
		&\leqslant \sup\limits_{B=B(x,r)}
		\fint_{B}
		\int_{{(MB)}^c} |b(z)-\fint_{MB} b|\big|K(y,z)-K(x,z)\big||f(z)|dz
		dy.
	\end{align*}
	Take advantage of some estimates in \cite{RU17} and we conclude  that for any $y\in B$ and $z\in {(MB)}^c$,
	\begin{align*}
		&\int_{{(MB)}^c} |b(z)-\fint_{MB} b|\big|K(y,z)-K(x,z)\big||f(z)|dz\\
		&\lesssim\sum\limits_{i=1}^m
		\sum\limits_{j=0}^\infty\bigg(\int_{A^i_j} |(b(z)-\fint_{MB}b)f(z)|^s\,dz\bigg)^{\frac{1}{s}}\cdot(2^{j}Mr)^{\alpha-\frac{n}{s}}\\
		&\ \ \ \ \ \ \ \ \ \ \ \ \cdot \bigg[\frac{
			|y-x|
		}{2^{j}Mr}+\int_{\frac{
				|y-x|
			}{2^{j}Mr}}^{\frac{
				|y-x|
			}{2^{j+1}Mr}}\frac{\omega_{1,p_1}(\delta)}{\delta}d\delta\bigg]\\
		&\lesssim\sum\limits_{i=1}^m
		\sum\limits_{j=0}^\infty\bigg(\fint_{A^i_j} |(b(z)-\fint_{MB}b)|^{\eta_2}\,dz\bigg)^{\frac{1}{\eta_2}}
		\bigg(\fint_{A^i_j}|f(z)|^{\eta}dz\bigg)^{\frac{1}{\eta}}\cdot(2^{j}Mr)^{\alpha}\\
		&\ \ \ \ \ \ \ \ \ \ \ \ \cdot\bigg[\frac{
			1
		}{2^{j}M}+\int_{\frac{
				|y-x|
			}{2^{j}Mr}}^{\frac{
				|y-x|
			}{2^{j+1}Mr}}\frac{\omega_{1,p_1}(\delta)}{\delta}d\delta\bigg]\\
	\end{align*}
	where $A^i_j=\{z: 2^{j}Mr\leqslant|x-A_i x_a|\leqslant2^{j+1}Mr\}$ and $\frac{1}{s}=\frac{1}{\eta}+\frac{1}{\eta_2}.$
	Notice that
	$$\bigg(\big((2^{j}Mr)^{\alpha\eta}\cdot\psi(2^{j+1}Mr)\big)^{\eta}\fint_{A^i_j}|f(z)|^{\eta}dz\bigg)^{\frac{1}{\eta}}\lesssim \bigg(M_{\big((\cdot)^\alpha \psi(\cdot)\big)^\eta}|f\circ A_i^{-1}|^\eta(x)\bigg)^{\frac{1}{\eta}}$$
	and
	\begin{align*}
		&\bigg(\frac{1}{[\psi(2^{j+1}Mr)]^{\eta_2}}\fint_{A^i_j}|(b(z)-\fint_{MB}b)|^{\eta_2}\,dz\bigg)^{\frac{1}{\eta_2}}\\
		&\lesssim\bigg(\frac{1}{[\psi(2^{j+1}Mr)]^{\eta_2}}\fint_{B(A_i^{-1}x,2^{j+1}Mr)}|(b(z)-\fint_{B(A_i^{-1}x,Mr)}b)|^{\eta_2}\,dz\bigg)^{\frac{1}{\eta_2}}\\
		&+\frac{1}{\psi(2^{j+1}Mr)}|\fint_{B(A_i^{-1}x,Mr)}b-\fint_{B(x,Mr)}b|\\
		&\lesssim \|b\|_{\mathcal{L}_{1,\psi}}+\sup\limits_{B=B(x,r)}\frac{1}{\psi(Mr)}|\fint_{B(A_i^{-1}x,Mr)}b-\fint_{B(x,Mr)}b|
	\end{align*}
	by  $\psi \in\mathcal{G}^{inc}$and \cite[Remark 4.1]{AN18}. Therefore the third item is controlled by
	\begin{align*}
		&\sum\limits_{i=1}^m
		\sum\limits_{j=0}^\infty (j+1)\bigg[\frac{
			1
		}{2^{j}M}+\int_{\frac{
				|y-x|
			}{2^{j}Mr}}^{\frac{
				|y-x|
			}{2^{j+1}Mr}}\frac{\omega_{1,p_1}(\delta)}{\delta}d\delta\bigg] \cdot \big(C_i(b)+\|b\|_{\mathcal{L}_{1,\psi}}\big)\\
		&\ \ \ \ \ \ \ \ \cdot\bigg(M_{\big((\cdot)^\alpha \psi(\cdot)\big)^\eta}|f\circ A_i^{-1}|^\eta(x)\bigg)^{\frac{1}{\eta}}\\
		&\lesssim
		\sum\limits_{i=1}^m \big(C_i(b)+\|b\|_{\mathcal{L}_{1,\psi}}\big)\bigg(M_{\big((\cdot)^\alpha \psi(\cdot)\big)^\eta}|f\circ A_i^{-1}|^\eta(x)\bigg)^{\frac{1}{\eta}}
	\end{align*}
	in case of the assumption (4) and  the definition of $\mathcal{L}_{1,\psi}^A(\R^n)$. Now the proof of the Claim is completed.
	
	As in the proof of Theorem \ref{T Bds}, we first consider $f\in L(\Phi,\phi)(\R^n)$ with support in $B(R)$.
	By the Claim and Lemma \ref{em M1}, we only need to show that $\lim\limits_{r\to\infty}\fint_{B(0,r)} \big|[b,T_{\Omega,\alpha}^m] f(x)\big|\,dx=0$.
	
	Since $f \in L^p(\R^n)$ with $supp f\subset B(R)$ and $b\in L^{p_0}_{loc}(\R^n)$ for any $p_0>1$, we deduce that  $T_{\Omega,\alpha}^m f\in L^{\frac{np}{n-\alpha p}}(\R^n)$ and $b\cdot T_{\Omega,\alpha}^m f\cdot \chi_{B(MR)}\in L^1(\R^n)$. Hence we have
	\begin{align*}
		S_1(r):=\fint_{B(r)} \big|b(x)\cdot T_{\Omega,\alpha}^m f(x)\cdot \chi_{B(MR)}(x)\big|dx\to0\ \ as\ \ r\to\infty,
	\end{align*}
	
	\begin{align*}
		S_2(r):=\fint_{B(r)} \big|T_{\Omega,\alpha}^m (bf)(x)\cdot \chi_{B(MR)}(x)\big|dx\to0\ \ as\ \ r\to\infty,
	\end{align*}
	\begin{align*}
		S_3(r):=\fint_{B(r)} \big|T_{\Omega,\alpha}^m (bf)(x)\cdot \chi_{(B({MR}))^c}(x)\big|dx\to0\ \ as\ \ r\to\infty
	\end{align*}
	and
	\begin{align*}
		S_4(r):=\fint_{B(r)} \big|\big(\fint_{B(MR)} b \big)\cdot T_{\Omega,\alpha}^m f(x)\cdot \chi_{(B({MR}))^c}(x)\big|dx\to0\ \ as\ \ r\to\infty.
	\end{align*}
	Besides,
	\begin{align*}
		S_5(r)&:=\fint_{B(r)} \big|\big(b(x)-\fint_{B(MR)} b \big)\cdot T_{\Omega,\alpha}^m f(x)\cdot \chi_{(B({MR}))^c}(x)\big|dx\\
		&\leqslant\bigg(\fint_{B(r)}\big|b(x)-\fint_{B(MR)} b \big|^{\v'}\bigg)^{\frac{1}{\v'}}\bigg(\fint_{B(r)}\big|T_{\Omega,\alpha}^m f(x)\cdot \chi_{(B({MR}))^c}(x)\big|^{\v}\bigg)^{\frac{1}{\v}}
	\end{align*}
	for some $\v>1$. From \cite[Remark 4.1]{AN18} we see that
	\begin{align*}
		\bigg(\fint_{B(r)}\big|b(x)-\fint_{B(MR)} b \big|^{\v'}\bigg)^{\frac{1}{\v'}}\lesssim \psi(r)\log \frac{r}{MR}\cdot \|b\|_{\mathcal{L}_{1,\psi}}.
	\end{align*}
	On the other hand,
	\begin{align*}
		&\bigg(\fint_{B(r)}\big|T_{\Omega,\alpha}^m f(x)\cdot \chi_{(B({MR}))^c}(x)\big|^{\v}\bigg)^{\frac{1}{\v}}\\
		&\leqslant
		r^{-\frac{n}{\v}}\sum\limits_{j=0}^{j_r} \|T_{\Omega,\alpha}^m f(x)\chi_{|x|\sim 2^j MR}\|_{L^\v(\R^n)}\\
		&\lesssim r^{-\frac{n}{\v}}\sum\limits_{j=0}^{j_r} (2^j MR)^{\alpha-\frac{n}{s}}\cdot \|f\|_{L^s(\R^n)}\cdot |B(2^j MR)|^{\frac{1}{\v}}\\
		&\lesssim \sum\limits_{j=0}^{j_r} (2^j MR)^{\alpha-\frac{n}{s}}\cdot \|f\|_{L^s(\R^n)}\leqslant \|f\|_{L^s(\R^n)}\cdot r^{\alpha-\frac{n}{s}},
	\end{align*}
	where $j_r$ is the smallest integer such that $r\leqslant2^{j_r} MR$.
	Hence (\ref{C3}) yields
	$$S_5(r)\lesssim \|b\|_{\mathcal{L}_{1,\psi}} \|f\|_{L^s(\R^n)}\cdot\log r\cdot \psi(r)\cdot
	r^{\alpha-\frac{n}{s}}\to 0\ \ as\ \ r\to\infty.$$
	From the above argument we obtain
	$$\lim\limits_{r\to\infty}\fint_{B(0,r)} \big|[b,T_{\Omega,\alpha}^m] f(x)\big|\,dx=\lim\limits_{r\to\infty} \sum\limits_{i=1}^5 S_i(r)=0.$$
	
	For the general case, we fix $b\in\cap_{i=1}^m\mathcal{L}^{A_i}_{1,\psi}(\R^n)$,  $f\in L(\Phi,\phi)(\R^n)$ and note that  $$\|f\circ A_i^{-1}\|_{L(\Phi,\phi)}\lesssim\|f\|_{L(\Phi,\phi)}\ \ for\ any\ i\in\{1,2,\cdots,m\}.$$
	From Lemma \ref{Pro} (2) and what has been proved,  we have
	\begin{align}\label{** 1}
		\nonumber\fint_{B(r)} \big|[b,T_{\Omega,\alpha}^m](f\chi_{B(Mr)})(x)\big|\,dx
		&\leqslant 2\Psi^{-1}(\phi(r))\big\|[b,T_{\Omega,\alpha}^m] (f\chi_{B(Mr)})\big\|_{L(\Psi,\phi)}\\
		&\lesssim \Psi^{-1}(\phi(r))\sum\limits_{i=1}^m \big(C_i(b)+\|b\|_{\mathcal{L}_{1,\psi}}\big)\|f\|_{L(\Phi,\phi)}.
	\end{align}
	Besides, we will show that
	\begin{align}\label{** 2}
		&\nonumber\fint_{B(r)} \big|[b,T_{\Omega,\alpha}^m](f\chi_{(B(Mr))^c})(x)\big|\,dx\\
		&\nonumber\lesssim \|b\|_{\mathcal{L}_{1,\psi}}\|f\|_{L(\Phi,\phi)}\bigg(\int_{Mr}^\infty t^\alpha\Phi^{-1}\big(\phi(t)\big)\frac{dt}{t}\cdot \psi(r) \bigg)\\   &+\|b\|_{\mathcal{L}_{1,\psi}}\|f\|_{L(\Phi,\phi)}\bigg(\int_{Mr}^\infty t^\alpha\Phi^{-1}\big(\phi(t)\big)\psi(t)\log t\frac{dt}{t} \bigg).
	\end{align}
	To obtain (\ref{** 2}), notice that
	\begin{align*}
		&\fint_{B(r)} \big|[b,T_{\Omega,\alpha}^m](f\chi_{(B(Mr))^c})(x)\big|\,dx\\
		&\leqslant \fint_{B(r)} \big|\int_{(B(Mr))^c} \big(b(x)-b(y)\big) K(x,y) f(y)\,dy\big|\,dx\\
		&\leqslant \fint_{B(r)} \big|b(x)-\fint_{B(r)} b\big|
		\bigg|\int_{(B(Mr))^c)}  K(x,y) f(y)\,dy\bigg|\,dx\\
		&+\fint_{B(r)}
		\int_{(B(Mr))^c}  \big|b(y)-\fint_{B(r)} b\big| |K(x,y)f(y)|\,dy\,dx.
	\end{align*}
	While (\ref{***}) and (\ref{* 2}) imply
	\begin{align*}
		&\fint_{B(r)} \big|b(x)-\fint_{B(r)} b\big|\bigg|\int_{(B(Mr))^c}  K(x,y) f(y)\,dy\bigg|\,dx\\
		&\leqslant \|b\|_{\mathcal{L}_{1,\psi}} \cdot\psi(r)\cdot\sup\limits_{x\in B(r)} T_{\Omega,\alpha}^m (f\chi_{(B(Mr))^c})(x)\\
		&\lesssim \|b\|_{\mathcal{L}_{1,\psi}}\|f\|_{L(\Phi,\phi)}\bigg(\int_{Mr}^\infty t^\alpha\Phi^{-1}\big(\phi(t)\big)\frac{dt}{t}\cdot \psi(r) \bigg).
	\end{align*}
	Furthermore for any $x\in B(r)$, by similar argument of (\ref{***}) and (\ref{* 2}), we have
	\begin{align*}
		&\int_{(B(Mr))^c}  \big|b(y)-\fint_{B(r)} b\big| |K(x,y)f(y)|\,dy\\
		&\lesssim \sum\limits_{j=j_M}^\infty (2^j r)^\alpha \bigg(\fint_{B(2^{j+1} r)} |f(y)|^{s_1}\,dy\bigg)^{\frac{1}{s_1}}\bigg(\fint_{B(2^{j+1} r)} \big|b(y)-\fint_{B(r) }b\big|^{s_2}\,dy\bigg)^{\frac{1}{s_2}}\\
		&\lesssim \sum\limits_{j=j_M}^\infty (2^j r)^\alpha \Phi^{-1}\big(\phi(2^j r)\big)\|f\|_{L(\Phi,\phi)}\cdot \psi(2^j r)\cdot j\cdot \|b\|_{\mathcal{L}_{1,\psi}}\\
		&\leqslant \|b\|_{\mathcal{L}_{1,\psi}}\|f\|_{L(\Phi,\phi)}\bigg(\int_{Mr}^\infty t^\alpha\Phi^{-1}\big(\phi(t)\big)\psi(t)\log t\frac{dt}{t} \bigg),
	\end{align*}
	where $s_1<p$, $\frac{1}{s_1}+\frac{1}{s_2}=\frac{1}{s}$ and $j_M$ is the integer such that $2^{j_M}\leqslant M<2^{j_M+1}$.
	After combining (\ref{C3}), (\ref{** 1}) and (\ref{** 2}), it follows that $$\lim\limits_{r\to\infty}\fint_{B(0,r)} \big|[b,T_{\Omega,\alpha}^m] f(x)\big|\,dx=0$$
	and then still by the Claim and Lemma \ref{em M1} we finally obtain
	\begin{align}\label{result 2}
		\|[b,T_{\Omega,\alpha}^m] f \|_{L(\Psi,\phi)} \lesssim \sum\limits_{i=1}^m \big(C_i(b)+\|b\|_{\mathcal{L}_{1,\psi}}\big)\|f\|_{L(\Phi,\phi)}
	\end{align}
	for any $b\in\cap_{i=1}^m\mathcal{L}^{A_i}_{1,\psi}(\R^n)$ and $f\in L(\Phi,\phi)(\R^n)$.\\
\end{proof}

\begin{example}
	Let $s<p\leqslant q =\frac{np}{n-\alpha p}$.
	Set $\phi(t)=t^{-n}$, $\psi(t)=1$, $\Phi(t)=t^p$, and $\Theta(t)=\Psi(t)=t^q$. By Theorem \ref{T Bds} and Theorem \ref{bT Bts}, we can conclude that $T_{\Omega,\alpha}^m$ is bounded from $L^p(\R^n)$ to $L^q(\R^n)$ and $[b,T_{\Omega,\alpha}^m]$ is bounded from $L^p(\R^n)$ to $L^q(\R^n)$ for any $b\in \cap_{i=1}^m BMO^{A_i}(\R^n)$.
\end{example}


\section{Compactness of $[b, \tilde T_{\Omega,\alpha}^1]$ from $L(\Phi,\varphi)(\R^n)$ to $L(\Psi,\varphi)(\R^n)$}

In this paper, no attempt has been made to develop the compactness of the operator whose kernel has more than one singularity since the estimates for such kernel related to two points $x,y\in \R^n$ can not be controlled as the distance between $x$ and $y$  converges to $0$. Thus recall the definition \begin{align}
	\tilde T_{\Omega,\alpha}^1 f(x)=\int_{\R^n}\frac{\Omega(x- y)}{|x- y|^{n-\alpha}} f(y)dy,
\end{align}
where $\Omega$ is homogeneous of degree 0 on $\R^n$ and $\Omega\in L^{r}(\Sigma)$ for some $r\geqslant1$.  In \cite{CT15}, the authors considered the compactness of the  commutaters for the bilinear fractional integral operator with $CMO$ functions. The compactness results for the commutaters of $\tilde T_{\Omega,\alpha}^1$ with $CMO$ functions on weighted spaces are included in \cite{GHWY22, GWY21, WY18} where the authors showed that
$[b,\tilde T_{\Omega,\alpha}^1]$  is compact from $L^p_{\omega^p}(\R^n)$ to $L^q_{\omega^q}(\R^n)$ for any $\omega^s\in \mathcal{A}(\frac{p}{s}.\frac{q}{s})$ and $b\in CMO(\R^n)$. The compactness results for the commutators of $I_\rho$ with the generlized $Companato$ function are discussed in \cite{YN22}. Our focus in this section is on the compactness of $[b,\tilde T_{\Omega,\alpha}^1]$ from $L(\Phi,\varphi)(\R^n)\  to \ L(\Psi,\varphi)(\R^n)$ and here we recall two essential lemmas coming from \cite{AN19, KA82,SS08,YN22}.

\begin{lemma}\label{Pro 2}\rm(\cite[Section 6]{YN22})\em\\
	(1) If $\Phi\in\nabla_2$,  $\Theta\in \Delta_2$ and $Tf(x)=\int K(x,y)f(y)\,dy$ for some kernel $K: \R^n \times \R^n \to \C$, then $T$ is bounded and compact from $L^\Phi(\R^n)$ to $L^\Theta(\R^n)$ when
	$$\bigg\| \|K(x,y)\|_{L^{\tilde\Phi}_y} \bigg\|_{L^\Theta_x}<\infty;$$
	(2) If $\Phi\in\Delta_2$ and $\phi\in \mathcal{G}^{dec}$,
	then the operator $T: f\to f\chi_{B(x,R)}$ is bounded from $L(\Phi,\phi)(\R^n)$ to $L^\Phi(\R^n)$ for any ball $B(x,R)\subset
	\R^n$;\\
	(3) Let $\Psi\in\Delta_2$ and $\phi\in \mathcal{G}^{dec}$. Assume that $\phi$ satisfies $(\ref{phi}),$ then there exists $\Theta\in \Delta_2$ such that the operator $T: f\to f\chi_{B(x,R)}$ is bounded from $L^\Theta(\R^n)$ to $L(\Psi,\phi)(\R^n)$ for any ball $B(x,R)\subset
	\R^n$.
\end{lemma}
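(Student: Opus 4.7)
The three parts of Lemma~\ref{Pro 2} are essentially independent and rely on standard tools from the theory of (generalized) Orlicz and Orlicz--Morrey spaces; I sketch each in turn.

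For \emph{part (1)}, the boundedness step is classical: Young's inequality \eqref{eq:Youngsineq} together with the duality pairing for $L^\Phi$ and $L^{\tilde\Phi}$ gives the pointwise H\"older-type estimate
\[
|Tf(x)| \leqslant \int_{\R^n} |K(x,y)||f(y)|\,dy \lesssim \|K(x,\cdot)\|_{L^{\tilde\Phi}_y}\,\|f\|_{L^\Phi},
\]
and taking the $L^\Theta_x$ norm yields the bound $\|Tf\|_{L^\Theta}\lesssim \bigl\|\,\|K(x,y)\|_{L^{\tilde\Phi}_y}\bigr\|_{L^\Theta_x}\|f\|_{L^\Phi}$. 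Compactness is obtained by truncation: set $K_n(x,y):=K(x,y)\chi_{\{|x|\leqslant n,\,|y|\leqslant n,\,|K(x,y)|\leqslant n\}}$ and let $T_n$ be the corresponding operator. The same Young-type estimate applied to $K-K_n$, combined with dominated convergence in the mixed Orlicz norm, gives $\|T-T_n\|\to 0$. Each $T_n$ is then compact via the Kolmogorov--Riesz criterion in the $\Delta_2$ space $L^\Theta$: the bounded support and essential boundedness of $K_n$ deliver the required equi-integrability, tightness, and equicontinuity of translations.

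\emph{Part (2)} is essentially a direct manipulation of the two definitions. Put $\lambda_0:=\|f\|_{L(\Phi,\phi)}$, so that by the Orlicz--Morrey definition $\int_B \Phi(|f|/\lambda_0)\,dz \leqslant |B|\phi(R)=:M$. The $\Delta_2$ assumption on $\Phi$ yields a constant $C=C(M)$ with $\Phi(t)\leqslant M\,\Phi(t/C)$ for all $t\geqslant 0$; rescaling gives $\int_B \Phi(|f|/(C\lambda_0))\,dz\leqslant 1$, which is precisely $\|f\chi_{B(x,R)}\|_{L^\Phi}\leqslant C\,\|f\|_{L(\Phi,\phi)}$.

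For \emph{part (3)}, the plan is to construct $\Theta$ explicitly from $\Psi$, $R$, and $\phi(R)$. Only test balls $B'=B(y,r)$ intersecting $B=B(x,R)$ contribute to $\|f\chi_B\|_{L(\Psi,\phi)}$, and the $\mathcal{G}^{dec}$ property of $\phi$ combined with (\ref{phi}) lets one reduce both regimes ($r\leqslant R$ and $r>R$) to a single Orlicz-type constraint on $B$ with a $\phi(R)$-normalised gauge: indeed, $\phi(r)r^n$ is (up to constants) monotone and the tail integrability of $\phi(t)/t$ absorbs the factor $1/(\phi(r)|B'|)$ uniformly. This motivates defining $\Theta$ via $\Theta^{-1}(t):=\Psi^{-1}(t\phi(R))/\phi(R)$ (or an equivalent rescaling), which inherits $\Delta_2$ from $\Psi$; the inequality $\|f\chi_B\|_{L(\Psi,\phi)}\lesssim \|f\|_{L^\Theta}$ then follows by a computation dual to the one in Part (2). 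The main obstacle is ensuring that the constructed $\Theta$ is a bona fide Young function satisfying $\Delta_2$ on all of $[0,\infty]$ rather than only on a bounded range, while simultaneously retaining the correct normalisation relative to $\phi(R)$: this is precisely where hypothesis (\ref{phi}) enters essentially, guaranteeing that $\phi$ has enough regularity for $\Theta$ to be extended consistently and for the bound to be uniform in the ball $B$.
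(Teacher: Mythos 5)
The paper does not give a proof of this lemma---it is quoted from \cite[Section~6]{YN22}---so there is nothing in the text itself to compare your proposal against; what follows assesses the proposal on its own terms. Parts (1) and (2) are sound in outline. For (1) the H\"older step is the standard Orlicz duality, and kernel truncation $K_n\to K$ together with a Kolmogorov--Riesz argument in $L^\Theta$ is the usual Hille--Tamarkin route; note, though, that the dominated convergence of $\|K(x,\cdot)-K_n(x,\cdot)\|_{L^{\tilde\Phi}_y}$ requires $\tilde\Phi\in\Delta_2$, which is precisely the content of the hypothesis $\Phi\in\nabla_2$ that you never invoke explicitly. Part (2) correctly splits on whether $|B|\phi(R)\leqslant 1$ and is fine.

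Part (3) contains a genuine gap. Your gauge $\Theta^{-1}(t)=\Psi^{-1}(t\phi(R))/\phi(R)$, equivalently $\Theta(s)=\Psi(s\,\phi(R))/\phi(R)$, is merely an affine rescaling of $\Psi$, so $L^\Theta(\R^n)=L^\Psi(\R^n)$ with an equivalent norm. The embedding you would then need, that $g\mapsto g\chi_B$ is bounded from $L^\Psi$ into $L(\Psi,\phi)$, is false. Take $\Psi(t)=t^q$ and $\phi(t)=t^{-\lambda}$ with $0<\lambda<n$ (this $\phi$ lies in $\mathcal{G}^{dec}$ and satisfies~(\ref{phi})), and
\[
f(x)=|x|^{-n/q}\big(\log(1/|x|)\big)^{-2/q}\chi_{B(0,1/2)}(x).
\]
Then $f\in L^q(\R^n)$, but $\int_{B(0,r)}|f|^q\sim\big(\log(1/r)\big)^{-1}$, which is not $\lesssim r^{n-\lambda}$ as $r\to 0$, so $\|f\|_{\Psi,\phi,B(0,r)}\to\infty$ and $f=f\chi_{B(0,1)}\notin L(\Psi,\phi)(\R^n)$. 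The correct $\Theta$ must therefore grow strictly faster than $\Psi$ at infinity, so that $L^\Theta$ is a proper subspace of $L^\Psi$; no ball-dependent rescaling of $\Psi$ can supply that, and in particular the difficulty is not, as you suggest, merely one of extending $\Theta$ from a bounded range to all of $[0,\infty]$. In addition, the lemma asks for a single $\Theta\in\Delta_2$ valid for every ball, whereas your $\Theta$ depends on $R$. The role of $\mathcal{G}^{dec}$ and~(\ref{phi}) is exactly to permit building one fixed $\Theta$---interlacing $\Psi$ with $\phi$ across all scales, not with the single value $\phi(R)$---so that both the $r\leqslant R$ and $r>R$ test-ball regimes are controlled simultaneously; your sketch acknowledges this as ``the main obstacle'' but does not carry the construction out.
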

\begin{lemma}\label{estimate b}\rm(\cite[Lemma 5.3]{AN19})\em
	 If $b\in C^\infty_c(\R^n)$ and  $\int_R^\infty \frac{\psi(t)}{t^2}dt\lesssim \frac{\psi(R)}{R}$ for any $R>0$, then there exists $0<\theta<1$ such that
	$$|b(x)-b(y)|\lesssim \|\nabla b\|_{L^\infty}|x-y|^\theta \psi(|x-y|)$$
	for any $x,y\in\R^n$ with $|x-y|<1$.
\end{lemma}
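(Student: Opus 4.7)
The plan is to reduce the claim to an elementary inequality about $\psi$ alone, via the mean value theorem. Since $b \in C_c^\infty(\R^n)$, the mean value inequality gives the crude Lipschitz bound $|b(x) - b(y)| \leq \|\nabla b\|_{L^\infty}\,|x-y|$ for all $x,y \in \R^n$. Consequently, it suffices to find $\theta \in (0,1)$ such that
\[
t^{1-\theta} \lesssim \psi(t) \qquad \text{for all } 0 < t < 1.
\]
Once this holds, writing $|x-y| = |x-y|^\theta \cdot |x-y|^{1-\theta}$ and using the Lipschitz bound immediately finishes the proof.

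To extract this polynomial lower bound on $\psi$ from the reverse Dini hypothesis, I would introduce the auxiliary function
\[
G(R) := \int_R^\infty \frac{\psi(t)}{t^2}\, dt, \qquad R > 0.
\]
Since $\psi$ is positive and the hypothesis at $R=1$ gives $G(1) \leq C\psi(1) < \infty$, the function $G$ is finite and strictly positive on $(0,\infty)$, with $G'(R) = -\psi(R)/R^2$. The hypothesis $G(R) \leq C\psi(R)/R$ then rewrites as the differential inequality
\[
\frac{d}{dR}\log G(R) \;=\; \frac{G'(R)}{G(R)} \;\leq\; -\frac{1}{CR}.
\]
Integrating from $R$ to $1$ with $0 < R < 1$ yields $\log G(R) \geq \log G(1) + \tfrac{1}{C}\log(1/R)$, i.e.\ $G(R) \geq G(1)\,R^{-1/C}$. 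Using $\psi(R)/R \geq G(R)/C$ once more, this gives the quantitative lower bound
\[
\psi(R) \;\geq\; \frac{G(1)}{C}\, R^{\,1-1/C}, \qquad 0 < R < 1.
\]

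To conclude, note that one may always enlarge the implicit constant appearing in the hypothesis, so we may assume $C > 1$; setting $\theta := 1/C \in (0,1)$, the previous display is exactly $t^{1-\theta} \lesssim \psi(t)$ on $(0,1)$. Combining with the mean value bound,
\[
|b(x)-b(y)| \;\leq\; \|\nabla b\|_{L^\infty}\,|x-y|^\theta\,|x-y|^{1-\theta} \;\lesssim\; \|\nabla b\|_{L^\infty}\,|x-y|^\theta\,\psi(|x-y|)
\]
whenever $|x-y| < 1$, as required. The only genuine content is the Gronwall-type step that upgrades the reverse Dini hypothesis into polynomial decay for $\psi$; everything else is bookkeeping, so I do not foresee a serious obstacle.
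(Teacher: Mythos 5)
Your proof is correct and uses what is essentially the standard argument for this type of statement: the mean value theorem reduces the claim to the polynomial lower bound $t^{1-\theta}\lesssim\psi(t)$ on $(0,1)$, and the Gronwall-type integration of the differential inequality $G'/G\leqslant -1/(CR)$ for $G(R)=\int_R^\infty\psi(t)t^{-2}\,dt$ delivers exactly that bound with $\theta=1/C$. The paper itself does not reproduce a proof (it cites \cite[Lemma~5.3]{AN19}), but your route is the natural one and I see no gap; the only cosmetic point is that finiteness of $G(R)$ for every $R>0$ follows directly from the hypothesis applied at that $R$, not just at $R=1$, which in any case is what you implicitly use.
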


\begin{theorem}\label{compact 2}
	Let $0\leqslant\alpha<n$, $1<s<\infty$. Suppose that $\Omega$ is homogeneous of degree 0 on $\R^n$ and $\Omega \in L^{s'}(S^{n-1})$ . Under the assumptions in Theorem \ref{bT Bts}, $[b,\tilde T_{\Omega,\alpha}^1]$ is compact from $L(\Phi,\phi)(\R^n)$ to $L(\Psi,\phi)(\R^n)$ for $b\in \overline{C_c^\infty(\R^n)}^{\|\cdot\|_{\mathcal{L}_{1,\psi}}}$ if
	\begin{align}\label{assumption}
		\int_R^\infty \frac{\psi(t)}{t^2}dt\lesssim \frac{\psi(R)}{R}\ and\ \int_R^\infty t^{\alpha-\frac{n}{s}}\frac{1}{\Psi^{-1}(\phi(t))}\frac{dt}{t}<\infty
	\end{align}
	for any $R>0$.
\end{theorem}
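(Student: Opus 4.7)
The plan is to follow the classical three-step scheme for commutator compactness (\`a la Uchiyama, and in the Orlicz--Morrey setting \`a la Arai--Nakai), specialised to $\tilde T_{\Omega,\alpha}^1$.

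\textbf{Step 1 (density reduction).} Specialising Theorem \ref{bT Bts} to $m=1$, $A_1=I$ (so $C_1(b)=0$ and $\mathcal{L}^{A_1}_{1,\psi}=\mathcal{L}_{1,\psi}$) gives the bound
$$\|[b-\tilde b,\tilde T_{\Omega,\alpha}^1]\|_{L(\Phi,\phi)\to L(\Psi,\phi)}\lesssim \|b-\tilde b\|_{\mathcal{L}_{1,\psi}}.$$
Hence if $b_k\in C_c^\infty(\R^n)$ satisfies $\|b_k-b\|_{\mathcal{L}_{1,\psi}}\to 0$, then $[b_k,\tilde T_{\Omega,\alpha}^1]\to[b,\tilde T_{\Omega,\alpha}^1]$ in operator norm. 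Since compact operators form a closed subspace of bounded operators in operator norm, it suffices to prove the theorem when $b\in C_c^\infty(\R^n)$.

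\textbf{Step 2 (truncation and compactness of the truncated piece).} Fix $b\in C_c^\infty$ with $\supp b\subset B(R_0)$ and, for $0<\epsilon<1<N$, define
$$T^{\epsilon,N}f(x):=\int_{\epsilon\leqslant|x-y|\leqslant N}\big(b(x)-b(y)\big)\frac{\Omega(x-y)}{|x-y|^{n-\alpha}}f(y)\,dy.$$
Because $b(x)-b(y)=0$ whenever both $x,y\notin B(R_0)$, the kernel of $T^{\epsilon,N}$ is supported in $B(R_0+N)\times B(R_0+N)$, and thanks to $\Omega\in L^{s'}(S^{n-1})$ a polar decomposition on the annulus $\epsilon\leqslant|x-y|\leqslant N$ shows the finiteness of $\bigl\|\|K^{\epsilon,N}(x,y)\|_{L^{\tilde\Phi}_y}\bigr\|_{L^\Theta_x}$ required by Lemma \ref{Pro 2}(1). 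Choosing $\Theta\in\Delta_2$ via Lemma \ref{Pro 2}(3), I would factor
$$L(\Phi,\phi)\xrightarrow{f\mapsto f\chi_{B(R_0+N)}}L^\Phi(\R^n)\xrightarrow{T^{\epsilon,N}}L^\Theta(\R^n)\xrightarrow{g\mapsto g\chi_{B(R_0+N)}}L(\Psi,\phi),$$
with the outer arrows bounded by parts (2) and (3) of Lemma \ref{Pro 2} and the middle one compact by part (1), yielding compactness of $T^{\epsilon,N}$ for every fixed $\epsilon$, $N$.

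\textbf{Step 3 (vanishing tails).} Write $[b,\tilde T_{\Omega,\alpha}^1]-T^{\epsilon,N}=U_\epsilon+V_N$, where $U_\epsilon$ and $V_N$ carry the kernels supported on $|x-y|<\epsilon$ and $|x-y|>N$ respectively. Since $b\in C_c^\infty$, Lemma \ref{estimate b} gives $|b(x)-b(y)|\lesssim \|\nabla b\|_\infty|x-y|^\theta\psi(|x-y|)$ for $|x-y|<1$, so the kernel of $U_\epsilon$ is majorised by $C|\Omega(x-y)||x-y|^{\alpha+\theta-n}\psi(|x-y|)\chi_{|x-y|<\epsilon}$; rerunning the Orlicz--Morrey estimate in the proof of Theorem \ref{T Bds} with $\alpha$ replaced by $\alpha+\theta$ and an extra factor $\psi$ yields $\|U_\epsilon\|_{L(\Phi,\phi)\to L(\Psi,\phi)}\to 0$ as $\epsilon\to 0$. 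For $V_N$, the bound $|b(x)-b(y)|\leqslant\|b\|_\infty\big(\chi_{B(R_0)}(x)+\chi_{B(R_0)}(y)\big)$ reduces the operator to a truncated fractional-type integral whose $L(\Phi,\phi)\to L(\Psi,\phi)$ norm is controlled by the tail integral $\int_N^\infty t^{\alpha-n/s}/\Psi^{-1}(\phi(t))\,dt/t$, which tends to $0$ by the second part of (\ref{assumption}). Combining Steps 2--3, $[b,\tilde T_{\Omega,\alpha}^1]$ is a norm-limit of compact operators, hence compact.

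\textbf{Main obstacle.} The technical heart lies in Step~3: both truncation errors must be small in operator norm, not merely pointwise. For $U_\epsilon$ this requires converting the H\"older gain $|x-y|^\theta$ from Lemma \ref{estimate b} into a genuinely $\epsilon$-vanishing constant after pairing with $\Phi^{-1}(\phi(\cdot))$ and the $\mathcal{G}^{inc}$-growth of $\psi$, while keeping the admissible range $p\in p_{\nabla_2}(\Phi)$ compatible with the shift $\alpha\mapsto\alpha+\theta$. For $V_N$ the only available source of decay is (\ref{assumption}), so the estimate has to be arranged so that precisely that tail integral bounds the operator norm, with no additional obstruction coming from the $\psi$-factor in the far field.
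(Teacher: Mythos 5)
Your proposal follows essentially the same three-step scheme as the paper: density reduction through the operator-norm bound of Theorem \ref{bT Bts}, compactness of the doubly truncated piece via the Hille–Tamarkin criterion (Lemma \ref{Pro 2}(1)) bridged by the cut-off maps of Lemma \ref{Pro 2}(2)–(3), and vanishing of the near- and far-field tails; the only cosmetic difference is that you truncate the commutator kernel directly while the paper truncates $\tilde T_{\Omega,\alpha}^1$ and then commutes, which produces the identical operator $[b,T^R_\varepsilon]$.

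One clarification concerning the obstacle you flag for the near-field term $U_\epsilon$: the paper does \emph{not} shift $\alpha\mapsto\alpha+\theta$, which—as you rightly worry—would introduce a new compatibility condition on $\Phi,\Psi,\phi$ absent from the hypotheses. Instead, after applying Lemma \ref{estimate b} and summing over dyadic annuli $2^{-j-1}\epsilon\leqslant|x-y|<2^{-j}\epsilon$, one writes
\begin{align*}
|U_\epsilon f(x)| &\lesssim \sum_{j\geqslant 0}(2^{-j}\epsilon)^{\theta}\,(2^{-j}\epsilon)^{\alpha}\psi(2^{-j}\epsilon)\Bigl(\fint_{B(x,2^{-j}\epsilon)}|f|^s\Bigr)^{1/s}\\
&\lesssim \epsilon^{\theta}\Bigl(M_{((\cdot)^\alpha\psi(\cdot))^s}|f|^s\Bigr)^{1/s}(x),
\end{align*}
peeling off $\epsilon^{\theta}$ as a geometric prefactor (using $2^{-j\theta}$ to make the series converge) and leaving the \emph{unshifted} maximal operator $M_{((\cdot)^\alpha\psi(\cdot))^s}$, whose $L(\Phi(\cdot^{1/s}),\phi)\to L(\Psi(\cdot^{1/s}),\phi)$ boundedness already follows from (\ref{C2}) and Lemma \ref{M Bds} exactly as in Theorem \ref{bT Bts}. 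With that mechanism made explicit, your concern about the admissible range is moot and the rest of your outline matches the paper's proof.
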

\begin{proof}
	For any $b\in \overline{C_c^\infty(\R^n)}^{\|\cdot\|_{\mathcal{L}_{1,\psi}}}$, by Theorem \ref{bT Bts}  there exists  functions $\{b_k\}_{k=1}^\infty\subset C^\infty_c(\R^n)$ such that
	$$\|[b,\tilde T_{\Omega,\alpha}^1]-[b_k,\tilde T_{\Omega,\alpha}^1]\|_{L(\Phi,\phi)(\R^n)\to L(\Psi,\phi)(\R^n)}\lesssim \|b-b_k\|_{\mathcal{L}_{1,\psi}}\to 0\ as\ k\to \infty.$$
	Hence it is sufficient to consider the condition $b\in C^\infty_c(\R^n)$.  Along the method in \cite{AN19,YN22}, set
	$$T^R_\varepsilon f(x)=\int_{\R^n}\frac{\Omega\chi_{B(R)\backslash B(\varepsilon)}(x-y)}{|x-y|^{n-\alpha}}f(y)\,dy$$ and
	$$T_\varepsilon f(x)=\int_{\R^n}\frac{\Omega\chi_{(B(\varepsilon))^c}(x-y)}{|x-y|^{n-\alpha}}f(y)\,dy.$$
	
	Step 1. $[b,T^R_\varepsilon]$ is compact from $L(\Phi,\phi)(\R^n)$ to  $L(\Psi,\phi)(\R^n)$.\\
	
	Since $b$ is supported in some ball $B_b$,  we can choose a ball $B^R_\varepsilon$ such that the kernel of $[b,T^R_\varepsilon]$,
	$$K^R_\varepsilon(x,y)=\big(b(x)-b(y)\big)\frac{\Omega\chi_{B(R)\backslash B(\varepsilon)}(x-y)}{|x-y|^{n-\alpha}},$$
	is supported in $B^R_\varepsilon\times B^R_\varepsilon\subset \R^n\times \R^n$ and thus
	$$|K^R_\varepsilon(x,y)|\lesssim  \chi_{B^R_\varepsilon}(x)\chi_{B^R_\varepsilon}(y)\frac{|\Omega\chi_{B(R)\backslash B(\varepsilon)}(x-y)|}{|x-y|^{n-\alpha}}.$$
	
	From Remark \ref{R2}, there exists $p_1, p_2>1$ such that $t^{p_1}\lesssim \Phi(t) $ for any $t>1$ and $t^{p_2}\lesssim \Phi(t) $ for any $0<t\leqslant1$. Hence we can find $C>0$ such that  $\tilde \Phi(t)\leqslant C t^{p_1'}$ for any $t>1$ and $\tilde \Phi(t)\leqslant C t^{p_2'}$ for any $0<t\leqslant1$. It follows that
	\begin{align*}
		\|K^R_\varepsilon(x,y)\|_{L^{\tilde\Phi}_y}
		&=\inf \bigg\{\lambda>0, \  \int_{\R^n} \tilde\Phi(\frac{|K^R_\varepsilon(x,y)|}{\lambda})\,dy\leqslant 1\bigg\}\\
		&\leqslant \inf \bigg\{\lambda>0, \  \int_{\R^n} \max\bigg\{\frac{C|K^R_\varepsilon(x,y)|^{p_1'}}{\lambda^{p_1'}},\frac{C|K^R_\varepsilon(x,y)|^{p_2'}}{\lambda^{p_2}}\bigg\}\,dy\leqslant 1\bigg\}\\
		&=\inf \bigg\{\lambda>0, \  \int_{\frac{|K^R_\varepsilon(x,y)|^{p_1'}}{\lambda^{p_1'}}\geqslant\frac{|K^R_\varepsilon(x,y)|^{p_2'}}{\lambda^{p_2'}}} \frac{C|K^R_\varepsilon(x,y)|^{p_1'}}{\lambda^{p_1'}}\,dy\\
		&\ \ \ \ \ \ \ \ \ \ \ \ \ \ \ \ \ \ \  +\int_{\frac{|K^R_\varepsilon(x,y)|^{p_1'}}{\lambda^{p_1'}}\leqslant\frac{|K^R_\varepsilon(x,y)|^{p_2'}}{\lambda^{p_2'}}}\frac{C|K^R_\varepsilon(x,y)|^{p_2'}}{\lambda^{p_2'}}\,dy\leqslant 1\bigg\}\\
		&\leqslant  (2C)^{\frac{1}{p_1'}}\|K^R_\varepsilon(x,y)\|_{L^{p_1'}_y}+(2C)^{\frac{1}{p_2'}}\|K^R_\varepsilon(x,y)\|_{L^{p_2'}_y}\\
		&\lesssim \chi_{B^R_\varepsilon}(x)
	\end{align*}
	and thus by Lemma \ref{Pro},
	\begin{align*}
		\bigg\| \|K^R_\varepsilon(x,y)\|_{L^{\tilde\Phi}_y} \bigg\|_{L^\Theta_x}\lesssim \|\chi_{B^R_\varepsilon}\|_{L^\Theta} \lesssim \frac{1}{\Theta^{-1}(\frac{2^n}{[{diam ({B^R_\varepsilon})}]^n})} <\infty.
	\end{align*}
	Combining this with Lemma \ref{Pro 2},
	$[b,T^R_\varepsilon]$ is compact from $L(\Phi,\phi)(\R^n)$ to  $L(\Psi,\phi)(\R^n)$ since
	$$T^R_\varepsilon f(x)=\chi_{B^R_\varepsilon}(x)T^R_\varepsilon (\chi_{B^R_\varepsilon}f)(x).$$
	
	Step 2. $$\|[b, T^R_\varepsilon]-[b,T_\varepsilon]\|_{L(\Phi,\phi)(\R^n)\to L(\Psi,\phi)(\R^n)}\to 0\ as\ R\to \infty.$$
	Given $f\in L(\Phi,\phi)$,
	 \begin{align}\label{*0}
		&\nonumber\big|[b, T^R_\varepsilon]f(x)-[b,T_\varepsilon]f(x)\big|\\
		&\nonumber \leqslant \int_{\R^n} |b(x)-b(y)|\frac{|\Omega\chi_{(B(R))^c}(x-y)|}{|x-y|^{n-\alpha}}|f(y)|\, dy\\
		&\lesssim\chi_{B_b}(x)\int_{\R^n} \frac{|\Omega\chi_{(B(R))^c}(x-y)|}{|x-y|^{n-\alpha}}|f(y)|\, dy+\int_{\R^n} \frac{|\Omega\chi_{(B(R))^c}(x-y)|}{|x-y|^{n-\alpha}}|f\chi_{B_b}(y)|\, dy.
	\end{align}
	
	For $x\in B_b$ and $R$ big  enough, $\{y:\ |x-y|>R\}\subset \big(B(2^{M(R)}r_b)\big)^c$ for  some $M(R)$ where $r_b=\frac{diam B_b}{2}$ and $M(R)\to \infty\ \ as\ \ R\to \infty.$
	It follows that
	\begin{align*}
		&\chi_{B_b}(x)\int_{\R^n} \frac{|\Omega\chi_{(B(R))^c}(x-y)|}{|x-y|^{n-\alpha}}|f(y)|\, dy\\
		&\leqslant \int_{(B(2^{M(R)}r_b))^c} \frac{|\Omega(x-y)|}{|x-y|^{n-\alpha}}|f(y)|\, dy
		\end{align*}
	\begin{align*}
		&\leqslant\sum\limits_{j=M(R)}^\infty \int_{B{(2^{j+1}r_b)}\backslash B{(2^{j}r_b)}} \frac{|\Omega(x-y)|}{|x-y|^{n-\alpha}}|f(y)|\, dy \\
		&\lesssim \sum\limits_{j=M(R)}^\infty (2^j r_b)^\alpha \,\Phi^{-1}\big(\phi(2^j r_b)\big)\|f\|_{L(\Phi,\phi)}
	\end{align*}
	and then
	\begin{align}\label{*1}
		&\nonumber\bigg\|\chi_{B_b}(x)\int_{\R^n} \frac{|\Omega\chi_{(B(R))^c}(x-y)|}{|x-y|^{n-\alpha}}|f(y)|\, dy\bigg\|_{L(\Psi,\phi)}\\
		&\nonumber\lesssim \|\chi_{B_b}\|_{L(\Psi,\phi)}\sum\limits_{j=M(R)}^\infty (2^j r_b)^\alpha \,\Phi^{-1}\big(\phi(2^j r_b)\big)\|f\|_{L(\Phi,\phi)} \\
		&\lesssim \frac{1}{\Phi^{-1}\big(\phi(r_b)\big)}\sum\limits_{j=M(R)}^\infty (2^j r_b)^\alpha\, \Phi^{-1}\big(\phi(2^j r_b)\big)\|f\|_{L(\Phi,\phi)}.
	\end{align}
	Furthermore, for $R$ big enough and any $|x|\leqslant \frac{R}{2}$, $\{y:\  |x-y|>R\}\cap B_b=\varnothing$  and thus
	\begin{align}\label{*2}
		&\nonumber\bigg\|\int_{\R^n} \frac{|\Omega\chi_{(B(R))^c}(x-y)|}{|x-y|^{n-\alpha}}|f\chi_{B_b}(y)|\, dy\bigg\|_{L(\Psi,\phi)}\\
		&\nonumber\leqslant\sum\limits_{j=j_R}^\infty\bigg\|\chi_{B{(2^{j+1}r_b)}\backslash B{(2^{j}r_b)}}(x)\int_{\R^n} \frac{|\Omega\chi_{(B(R))^c}(x-y)|}{|x-y|^{n-\alpha}}|f\chi_{B_b}(y)|\, dy\bigg\|_{L(\Psi,\phi)}\\
		&\nonumber \lesssim \sum\limits_{j=j_R}^\infty (2^j r_b)^{\alpha-\frac{n}{s}}\|f\chi_{B_b}\|_{L^s}\|\chi_{B(2^{j+1}r_b)}\|_{L(\Psi,\phi)}\\
	    &	\lesssim \sum\limits_{j=j_R}^\infty (2^j r_b)^{\alpha-\frac{n}{s}}\|f\|_{L(\Phi,\phi)}\frac{1}{\Psi^{-1}\big(\phi(2^{j+1}r_b)\big)},
	\end{align}
	where $2^{j_R}\leqslant\frac{R}{2}<2^{j_R+1}$ and $j_R\to \infty\ \ as\ \ R\to \infty$.
	
	Combining (\ref{C2}), (\ref{C3}),
	(\ref{assumption}),  (\ref{*0}), (\ref{*1}) and (\ref{*2}), we obtain
	$$\|[b, T^R_\varepsilon]-[b,T_\varepsilon]\|_{L(\Phi,\phi)(\R^n)\to L(\Psi,\phi)(\R^n)}\to 0\ as\ R\to \infty.$$

	Step 3. $$\|[b,\tilde T^1_{\Omega,\alpha}]-[b, T_\varepsilon]\|_{L(\Phi,\phi)(\R^n)\to L(\Psi,\phi)(\R^n)}\to 0\ as\ \varepsilon\to 0.$$\\
	According to Lemma \ref{estimate b}, there exists $0<\theta<1$ such that for any $x,y\in\R^n$ with $|x-y|<1$,
	$$|b(x)-b(y)|\lesssim \|\nabla b\|_{L^\infty}|x-y|^\theta \psi(|x-y|).$$
	Therefore for $\varepsilon<1$,
	\begin{align*}
		&|[b,\tilde T^1_{\Omega,\alpha}]f(x)-[b, T_\varepsilon]f(x)|
		\leqslant \int_{\R^n} |b(x)-b(y)|\frac{|\Omega\chi_{B(\varepsilon)}(x-y)|}{|x-y|^{n-\alpha}}|f(y)|\, dy\\
		&\lesssim \sum\limits_{j=0}^\infty (2^{-j}\varepsilon)^{\theta}
		\psi(2^{-j}\varepsilon)\bigg\|\frac{\Omega(x-\cdot)}{|x-\cdot|^{n-\alpha}}\bigg\|_{L^{s'}\big((B(x,2^{-j-1}\varepsilon))^c\big)}\cdot\|f\|_{L^s\big(B(x,2^{-j}\varepsilon)\big)}\\
		&\lesssim \sum\limits_{j=0}^\infty (2^{-j}\varepsilon)^{\theta+\alpha}\psi(2^{-j}\varepsilon)\bigg(\fint_{B(x,2^{-j}\varepsilon)}|f|^s\bigg)^{\frac{1}{s}}\\
		&\lesssim \varepsilon^\theta \bigg(M_{((\cdot)^\alpha\psi(\cdot))^s}|f|^s\bigg)^{\frac{1}{s}}(x).
	\end{align*}
	From (\ref{C2}), $R^\alpha\psi(R)\Phi^{-1}\big(\phi(R)\big)\lesssim \Psi^{-1}\big(\phi(R)\big)$ for any $R>0$ and thus
	$$\big(R^\alpha\psi(R)\big)^s\{\Phi\big(\cdot^\frac{1}{s}\big)\}^{-1}(\phi(R))\lesssim \{\Psi\big(\cdot^\frac{1}{s}\big)\}^{-1}\big(\phi(R)\big)\ \ for\ \ any\ \ R>0.$$
	This together with Lemma \ref{M Bds} yields
	\begin{align*}
		\bigg\|\bigg(M_{\big((\cdot)^\alpha\psi(\cdot)\big)^s}|f|^s\bigg)^{\frac{1}{s}}\bigg\|_{L(\Psi,\phi)}
		&=\bigg\|M_{\big((\cdot)^\alpha\psi(\cdot)\big)^s}|f|^s\bigg\|^{\frac{1}{s}}_{L(\Psi(\cdot^{\frac{1}{s}}),\phi)}\\
		&\lesssim \||f|^s\|^{\frac{1}{s}}_{L(\Phi(\cdot^{\frac{1}{s}}),\phi)} =\|f\|_{L(\Phi,\phi)}
	\end{align*}
	and consequently
	\begin{align*}
		&\|[b,\tilde T^1_{\Omega,\alpha}]-[b, T_\varepsilon]\|_{L(\Phi,\phi)(\R^n)\to L(\Psi,\phi)(\R^n)}\\
		&=\sup\limits_{\|f\|_{L(\Phi,\phi)}\leqslant 1} \|[b,\tilde T^1_{\Omega,\alpha}]f-[b, T_\varepsilon]f\|_{L(\Psi,\phi)}\\
		&\lesssim \sup\limits_{\|f\|_{L(\Phi,\phi)}\leqslant 1}
		\varepsilon^\theta \bigg\|\bigg(M_{\big((\cdot)^\alpha\psi(\cdot)\big)^s}|f|^s\bigg)^{\frac{1}{s}}\bigg\|_{L(\Psi,\phi)}\\
		&\lesssim \varepsilon^\theta\to 0\ \ as \ \ \varepsilon\to0.
	\end{align*}
	Now the proof of Theorem \ref{compact 2} is completed.
\end{proof}


\end{document}